\documentclass[a4paper,11pt]{article}
\usepackage{mathrsfs, amsmath, amsxtra, amssymb, latexsym, amscd, amsthm}
\usepackage{color, graphicx}
%21005005 Dhkt@1234@
%\usepackage[utf8]{vietnam}
% Set left margin - The default is 1 inch, so the following
% command sets a 1.25-inch left margin.
\voffset =0in
\hoffset =0in

\parskip5pt
\setlength{\oddsidemargin}{0.378in}
\setlength{\evensidemargin}{0.378in}
\setlength{\textwidth}{15.5cm}
\textheight 22truecm
\voffset=-0.5in
\newtheorem{thm}{Theorem}[section]
\newtheorem{cor}{Corollary}[section]
\newtheorem{lem}{Lemma}[section]
\newtheorem{prop}{Proposition}[section]
\theoremstyle{definition}
\newtheorem{defn}{Definition}[section]
\newtheorem{exam}{Example}[section]

\theoremstyle{remark}
\newtheorem{rem}{Remark}[section]
\numberwithin{equation}{section}
\def\ind{{\rm 1\hspace{-0.90ex}1}}
\def\E{\mathbb{E}}
% Set the beginning of a LaTeX document
\begin{document}

\title{Rates of Fisher information convergence in the central limit theorem for nonlinear statistics}
\author{Nguyen Tien Dung\thanks{Department of Mathematics, University of Science, Vietnam National University, Hanoi, 334 Nguyen
Trai, Thanh Xuan, Hanoi, 084 Vietnam. Email: dung@hus.edu.vn}}

\date{\today}          % Ngay
%\address{Department of Mathematics, FPT University\\ Hoa Lac High Tech Park, Hanoi, Vietnam}
%\email{dung\_nguyentien10@yahoo.com, dungnt@fpt.edu.vn}
\maketitle
\begin{abstract}
We develop a general method to study the Fisher information distance in central limit theorem for nonlinear statistics. We first construct completely new representations for the score function. We then use these representations to derive quantitative estimates for the Fisher information distance. To illustrate the applicability of our approach, explicit rates of Fisher information convergence for quadratic forms and the functions of sample means are provided. For the sums of independent random variables, we obtain the Fisher information bounds without requiring the finiteness of Poincar\'e constant. Our method can also be used to bound the Fisher information distance in non-central limit theorems.
\end{abstract}
\noindent\emph{Keywords:} Fisher information, Central limit theorem, Nonlinear statistics.\\
{\em 2020 Mathematics Subject Classification:} 62B10, 60F05, 94A17.
%\section{Introduction}       % Muc dau tien
%{\large Densities, Tail probabilities and }

\section{Introduction}
% \subsection{Relative Fisher information and previous results}
%\subsection{Motivation}
It is well known that the Fisher information plays a fundamental role in information theory and related fields. Given a random variable $F$ with an absolutely continuous density $p_F$,
the Fisher information of $F$ (or its distribution) is defined by
$$
I(F)= \int_{-\infty}^{+\infty} \frac{p_F'(x)^2} {p_F(x)}dx=\E[\rho_F^2(F)],
$$
where $p_F'$ denotes a Radon-Nikodym derivative of $p_F$ and $\rho_F:=p_F'/p_F$ is the score function. Furthermore, if $\E[F]=\mu$ and ${\rm Var}(F)=\sigma^2,$ the Fisher information distance of $F$ to the normal distribution $Z\sim N(\mu,\sigma^2)$ is defined by
$$I(F\|Z):=\E\left[\left(\rho_F(F)+\frac{F-\mu}{\sigma^2}\right)^2\right]=I(F)-I(Z).$$
We recall that this distance provides a very strong measure of convergence to Gaussian. For example, it dominates the relative entropy (or Kullback-Leibler distance) and the supremum distance between densities. It also dominates the traditional distances such as Kolmogorov distance, total variation distance and Wasserstein distance, etc.

When the density of $F$ is not explicitly known, the estimation of Fisher information  is really difficult. Let us consider the simplest case, that is the case of the normalized sums:
\begin{equation}\label{iklf}
Z_n:= \frac{X_1 + \dots + X_n}{\sqrt{n}},\,\,n\geq 1,
\end{equation}%., see e.g. \cite{B-J} for a short survey.
where $(X_n)_{n \geq 1}$ is a sequence of  independent identically distributed (i.i.d.) random variables with $\E[X_1] = 0$ and ${\rm Var}(X_1) = 1.$ The study of the central limit theorem in information-theoretic sense has a long history beginning in 1959 with the results of Linnik \cite{Linnik1959}. However, even in this simplest case, the problem of obtaining quantitative  bounds on $I(Z_n||Z)$ had been outstanding for many years.

\noindent$\bullet$ The first quantitative results were established in 2004. The authors of two papers \cite{Artstein2004,B-J} require the random variables $X_k's$ to satisfy the Poincar\'e inequality with constant $c > 0$: i.e. for every smooth function $s,$ one has
$$c{\rm Var}(s(X_k))\leq \E[s'(X_k)^2].$$ %(note that the Poincar\'e inequality implies all the moments of $X_1$ are finite).
Under such an assumption, Barron and Johnson \cite{B-J} obtained the following bound
$$I(Z_n\|Z)\leq \frac{2}{2+c(n-1)}I(X_1\|Z).$$
Furthermore, this bound can be extended  to the case of non-identical variables, see Section 3.1 in \cite{Johnson2004} or Remark \ref{jsm4} below. For the weighted sums $\bar{Z}_n:=a_1X_1+\cdots+a_nX_n,$ where $a_1^2+\cdots+a_n^2=1,$ Artstein et al. \cite{Artstein2004} obtained an analogous result that reads
$$I(\bar{Z}_n\|Z)\leq \frac{\alpha(a)}{c/2+(1-c/2)\alpha(a)}I(X_1\|Z),$$
where $\alpha(a):=a_1^4+\cdots+a_n^4.$

\noindent$\bullet$  Ten years later, in their very long paper, Bobkov et al. \cite{Bobkov2014b} proved the following Edgeworth-type expansion: Let $\E |X_1|^s < +\infty$ for an integer $s \geq 2$,
and assume $I(Z_{n_0}) < +\infty$, for some $n_0$. Then for certain
coefficients $c_j$ we have, as $n \rightarrow \infty$,
$$
I(Z_n\|Z) = \frac{c_1}{n} + \frac{c_2}{n^2} + \dots +
\frac{c_{[(s-2)/2)]}}{n^{[(s-2)/2)]}} +
o\left(n^{-\frac{s-2}{2}} \, (\log n)^{-\frac{(s-3)_+}{2}}\right).
$$
\noindent$\bullet$  Recently, Johnson \cite{Johnson2020} used the maximal correlation coefficient to deduce the following result
$$I(Z_n\|Z)\leq \frac{I(X_1\|Z)}{1+\Theta^{(2)}(n-1)},$$
where $\Theta^{(2)}$ is defined by the formula (12) in \cite{Johnson2020}.

Interestingly, the methods developed in the above mentioned papers are very different from each other, see Section I in \cite{Johnson2020} for a discussion. Here we note that the finiteness of Poincar\'e constant $c$ required in \cite{Artstein2004,B-J,Johnson2004}  implies that all moments of $X_k$ are finite. Hence, the results of \cite{Bobkov2014b,Johnson2020} for the case of i.i.d. random variables are remarkable contributions to the literature. Naturally, one may wonder whether the finiteness of Poincar\'e constant can be replaced by weaker assumptions in the case of non-i.i.d. random variables. This question, to the best of our knowledge, is still an open problem. Furthermore, it is well known that most of the random variables occurring in statistics are not sums
of independent random variables, they are often nonlinear statistics of the form
\begin{equation}\label{klf7}
F=F(X_1,\cdots,X_n).
\end{equation}
Over the past decades, several general techniques have been developed to obtain Berry-Esseen error estimates (i.e. bounds on Kolmogorov distance) for $F$ and its special cases, see e.g. \cite{Nicolas2020,Shao2019} and references therein. However, surprisingly, we only find Fisher information bounds in \cite{Artstein2004,Bobkov2014b,B-J,Johnson2004,Johnson2020} for the sums of independent random variables. Besides, by means of the Malliavin-Stein method, some Fisher information bounds were obtained in \cite{Herry2023,Nourdinart2016,Nourdin2014} for Wiener chaos (i.e. when $F$ is a polynomial and $X_1,\cdots,X_n$ are Gaussian random variables).
%Nourdin2014b,

In this paper, motivated by the above observations, our purpose is to develop a new technique for investigating the Fisher information distance in general framework of nonlinear statistics (\ref{klf7}). The generality of our results lies in two facts: (i) we do not require $X_k's$ to be identically distributed, (ii) we do not require any special structure of $F.$ Clearly, this framework is much harder to work with and it requires some new ideas. Note that all the nice properties of the sum of independent random variables are no longer available.

\noindent{\bf About our approach.} By normalizing, we may assume $\E[F]=0$ and ${\rm Var}(F)=1$ for simplicity. The Fisher information distance now becomes
\begin{equation}\label{vail}
I(F\|Z)=\E[(\rho_F(F)+F)^2],
\end{equation}
where $Z\sim N(0,1).$ In the case of the normalized sums $Z_n,$ one can employ the score functions $\rho_k$ of $X_k,1\leq k\leq n$ to represent the score function of $Z_n.$ In fact, we have
\begin{equation}\label{g6vail}\rho_{Z_n}(Z_n)=\E\left[\frac{\rho_{1}(X_1)+\cdots+\rho_{n}(X_n)}{\sqrt{n}}\big|Z_n\right],\,\,n\geq 1.
\end{equation}
The representation (\ref{g6vail}) and its variants have been widely used in the previous works by several authors. However, unfortunately, we cannot combine this representation with (\ref{vail}) to derive an effective bound on $I(Z_n\|Z)$ when $X_1$ is non-Gaussian. Indeed, we have
$$I(Z_n\|Z)\leq \E\left|\frac{\rho_{1}(X_1)+\cdots+\rho_{n}(X_n)}{\sqrt{n}}+Z_n\right|^2=\E|\rho_{1}(X_1)+X_1|^2\nrightarrow 0\,\,\text{as}\,\,n\to\infty.$$
This observation suggests that, to bound the Fisher information distance directly from its definition (\ref{vail}),  we have to seek a new representation for the score function. This is exactly what we want to do in the present paper. The key idea in our approach is to use the following Stein-type covariance formula.
\begin{lem}\label{kflo} Let $X$ be a random variable such that its distribution has a density $p$ supported on a connected interval of $\mathbb{R}.$ Also let $\alpha$ be a bounded and differentiable function. Then, for any function $\beta$ with $\E|\beta(X)|<\infty,$ we have
\begin{equation}\label{key}
{\rm Cov}(\alpha(X),\beta(X))=\int_{-\infty}^\infty \alpha'(x)\left(\int_x^\infty (\beta(y)-\E[\beta(X)])p(y)dy\right)dx.
\end{equation}
\end{lem}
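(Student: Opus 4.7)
The plan is to reduce the identity to an integration by parts after rewriting the covariance in terms of a well-chosen antiderivative. First I would set $\tilde\beta(y):=\beta(y)-\E[\beta(X)]$, so that $\int_{-\infty}^\infty \tilde\beta(y)p(y)dy=0$, and define the auxiliary function
\[
G(x):=\int_x^\infty \tilde\beta(y)p(y)dy.
\]
Since $\E|\beta(X)|<\infty$, the measure $\tilde\beta(y)p(y)dy$ is finite and signed; hence $G$ is absolutely continuous on $\mathbb{R}$ with $G'(x)=-\tilde\beta(x)p(x)$ almost everywhere.

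The next step is to verify that $G$ vanishes at both endpoints. As $x\to-\infty$ we have $G(x)\to \int_{-\infty}^\infty \tilde\beta(y)p(y)dy = 0$, and as $x\to+\infty$ dominated convergence gives $G(x)\to 0$. (When the support of $p$ is a proper subinterval, $G$ is even identically zero outside its closure, so no subtlety arises.) The covariance can be written as
\[
{\rm Cov}(\alpha(X),\beta(X))=\int_{-\infty}^\infty \alpha(x)\tilde\beta(x)p(x)dx=-\int_{-\infty}^\infty \alpha(x)G'(x)dx.
\]

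Now I would apply integration by parts to this last integral. Since $\alpha$ is bounded and differentiable, and $G$ is absolutely continuous with $G(\pm\infty)=0$, the boundary term $[\alpha(x)G(x)]_{-\infty}^{+\infty}$ vanishes, yielding
\[
-\int_{-\infty}^\infty \alpha(x)G'(x)dx=\int_{-\infty}^\infty \alpha'(x)G(x)dx,
\]
which is precisely the right-hand side of (\ref{key}).

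The only potentially delicate point is ensuring the boundary terms truly vanish and that integration by parts is valid in this generality; this is where boundedness of $\alpha$ enters crucially, together with the decay of $G$ at infinity guaranteed by the mean-zero property of $\tilde\beta$ under $p$. Beyond this, the argument is a direct Fubini/integration-by-parts calculation requiring no further structural hypotheses on $\alpha$ or $\beta$.
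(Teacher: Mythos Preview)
Your proof is correct and follows essentially the same approach as the paper: define $G(x)=\int_x^\infty(\beta(y)-\E[\beta(X)])p(y)\,dy$, rewrite the covariance as $-\int \alpha\,dG$, and integrate by parts using $G(\pm\infty)=0$. The paper's argument is the same computation written slightly more tersely in Stieltjes form.
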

\noindent The above lemma is not new and its proof is really simple, see e.g. \cite{Ernst2020,Saumard2018,Saumard2019}. Indeed, we have
\begin{align*}
{\rm Cov}(\alpha(X),\beta(X))&=\int_{-\infty}^\infty\alpha(x)(\beta(x)-\E[\beta(X)])p(x)dx\notag\\
&=-\int_{-\infty}^\infty \alpha(x)d\left(\int_x^\infty (\beta(y)-\E[\beta(X)])p(y)dy\right)\notag\\
&=\int_{-\infty}^\infty \alpha'(x)\left(\int_x^\infty (\beta(y)-\E[\beta(X)])p(y)dy\right)dx.
\end{align*}
Notice that $\lim\limits_{x\to\pm\infty}\int_x^\infty (\beta(y)-\E[\beta(X)])p(y)dy=0$ because $\E|\beta(X)|<\infty.$ At first glance we do not see any connection between Lemma \ref{kflo} and the score function $\rho_F$ of $F.$ However, Lemma \ref{kflo} is very useful to find a random variable $H$ such that
$$\E[f'(F)]=\E[f(F)H]$$
for any $f$ belonging to a  suitable class of test functions. The above relation, together with standard arguments, allows us to recover the density function $p_F$ of $F$ as follows
$$p_F(x)=\E[\ind_{\{F>x\}}H],$$
and hence, the score function $\rho_F(F)=-\E[H|F].$ Consequently, we obtain the Fisher information bound
$$I(F\|Z)\leq \E|H-F|^2.$$
The rest of this article is organized as follows. In Section \ref{klf5}, we introduce some notations and definitions. Sections \ref{89jk} and \ref{89jk2} are devoted to our main results. In Section \ref{89jk}, we provide two abstract bounds on the Fisher information distance: the first one uses martingale-type decompositions, see Theorem \ref{gko} and the second one uses approximate statistics, see Theorem \ref{8uh4}.

In Section \ref{89jk2}, we apply the bounds of Section \ref{89jk} to obtain explicit Fisher information bounds in the central limit theorem for Quadratic forms and the functions of sample means, see Theorems \ref{uij8} and \ref{kgl1}, respectively. Under suitable assumptions, these bounds are consistent with known Berry-Esseen estimates for the rate of weak convergence, see Corollaries \ref{fkm9}, \ref{fdv} and \ref{kmdl}. Particularly, for the normalized sum of non-i.i.d random variables, we obtain the rate $O(1/n)$ without requiring the finiteness of Poincar\'e constants, see Proposition \ref{9ohj2} and Theorem \ref{io9y}.

The conclusion and further remarks are given in Section \ref{jkfm}. We show, in Subsection \ref{ufk2}, that ones can combine the Fisher information bounds with concentration inequalities to derive nonuniform Berry-Esseen bounds. When applying to the normalized sums, for the first time, a Gaussian nonuniform Berry-Esseen bound is obtained. In Subsection \ref{ufk2b}, we show that our method can also be used to bound the Fisher information distance in non-central limit theorems.

%The explicit estimates for the rate of convergence are provided in As an illustrative example,

\section{Notations and Stein kernels}\label{klf5}
%\section{Assumption and notations}
We let $(\Omega,\mathcal{F},\mathbb{P})$ denote a suitable probability space on which all random quantities subsequently dealt with are defined. Throughout this paper, we use the following notations.

\noindent $\bullet$ We denote by  $(X_k)_{k\geq 1}$ the sequence of independent real-valued random variables (not necessarily identically distributed). Without loss of generality, we assume that $\E[X_k]=0$ and ${\rm Var}(X_k)=1$ for all $k\geq 1.$ Since we are interested in the Fisher information distance, we further assume that, for each $k\geq 1,$ the  distribution  of  random  variable $X_k$ has a differentiable density supported on some connected interval of $\mathbb{R}.$ %and has an a.e. positive  density  on that interval.$[a_k,b_k]$

\noindent $\bullet$ For each $k\geq 1,$ we denote by $p_k$ the density function of $X_k$ and by $\E_k$ the expectation with respect to $X_k.$

%we denote by $\mathcal{C}^k$ the space of $k$-times continuously differentiable functions $f:\mathbb{R}^n\to \mathbb{R}.$
\noindent $\bullet$ For any integers $k\geq 0,n\geq 1,$  we let ${\mathbb{X}_n}$ be the random vector $(X_1,\cdots,X_n)$ and denote by $\mathcal{C}_{\mathbb{X}_n}^k$ the space of random variables defined by
  $$\mathcal{C}_{\mathbb{X}_n}^k=\{F({\mathbb{X}_n}):F\,\, \text{is $k$-times continuously differentiable function on the range of $\mathbb{X}_n$}\},$$
and, with a slight abuse of notation, we write $F$ instead of $F({\mathbb{X}_n}).$

\noindent $\bullet$ As usual, $L^p(\Omega)$ denotes the space of random variables $X$ such that $\|X\|_p:=(\E|X|^p)^{\frac{1}{p}}<\infty.$

\noindent $\bullet$ For any function $h$ on $\mathbb{R}^n$ and $x=(x_1,\cdots,x_n)\in \mathbb{R}^n,$ we write $$h(x,x_k=y)=h(x_1,\cdots,x_{k-1},y,x_{k+1},\cdots,x_n).$$ If the derivatives exist, we set $\partial_kh=\frac{\partial h}{\partial x_k},$ $\partial_{ki}h=\frac{\partial^2 h}{\partial x_i\partial x_k}$ and so on.

We now define the random quantities that will be used to formulate our main results.%and cumulative distribution function $\nu,$
\begin{defn}\label{klfa} Given a random variable $Y\in L^1(\Omega)$ with a density $p_Y$ supported on some connected interval of $\mathbb{R},$  we define the function $\tau_Y$ associated to  $Y$ as follows
$$\tau_Y(x)=\frac{\int_{x}^\infty (y-\E[Y])p_Y(y)dy}{p_Y(x)}$$
for $x$ in the support of $p_Y.$ When $Y=X_k,$ we write $\tau_k$ instead of $\tau_{X_k}.$
\end{defn}
%By its definition, we have $\tau_Y(Y)>0\,\,a.s.$ For random variables $Y$ with $\E|Y|^2<\infty,$ it follows from  Lemma \ref{kflo} that $\E[\tau_Y(Y)]={\rm Var}(Y).$
In the literature, the function $\tau_Y$ is the so-called Stein kernel of $Y.$ Note that the Stein kernel was first studied in \cite{Stein1986}, and since then, it has became an effective tool in comparing of distributions, see e.g. \cite{Bonis2020,Ernst2020,Fathi2019,Ley2017,Mijoule2023,Saumard2019}. In the present paper, we use the following two fundamental properties of Stein kernels:%\cite{Bonis2020,Fathi2019,Ernst2020,Mijoule2023,Saumard2019}
$$\tau_Y(Y)\geq 0\,\,\,\text{and}\,\,\,\E[\tau_Y(Y)]={\rm Var}(Y).$$
In particular, we always have $\E[\tau_k(X_k)]=1$ for all $k\geq 1.$

\noindent Generalizing the definition of Stein kernels, we have
\begin{defn} Let $F\in \mathcal{C}_{\mathbb{X}_n}^0\cap L^1(\Omega).$  For each $k=1,\cdots,n,$ we define
%$$\mathcal{L}_kF=\frac{\int_{-\infty}^\infty (\nu_k(X_k\wedge y)-\nu_k(X_k)\nu_k(y))\partial_kF({\mathbb{X}_n},X_k=y)dy}{p_k(X_k)}.$$
$$\mathcal{L}_kF=\mathcal{L}_kF({\mathbb{X}_n}):=\frac{\int_{X_k}^\infty \big(F({\mathbb{X}_n},X_k=x)-\E_k[F]\big)p_k(x)dx}{p_k(X_k)}.$$
\end{defn}
In Section \ref{89jk2} below, we require the Stein kernels to be bounded from above and/or from below. Let us provide some examples of Stein kernels so that the reader can check such conditions.
\begin{exam}[Pearson distributions]  By definition, the law of a random variable $Y$ is a member of the Pearson family
of distributions if the density $p_Y$ of $Y$ is supported on $(a,b),$ where $-\infty\leq a<b\leq \infty$ and satisfies the differential equation
$$\frac{p'_Y(x)}{p_Y(x)}=-\frac{mx+k}{\alpha_1 x^2+\alpha_2 x+\alpha_3},\,\,x\in (a,b),$$
for some real numbers $m,k,\alpha_1,\alpha_2,\alpha_3.$ If furthermore $\E[Y]=0$ and $m=2\alpha_1+1,k=\alpha_2$ then the Stein kernel of $Y$ can be computed by (see page 65 in \cite{Stein1986})
$$\tau_Y(Y)=\alpha_1 Y^2+\alpha_2 Y+\alpha_3.$$
One can verify that this is the case of the following distributions, among others: Normal distribution, Student's $t$-distribution, Gamma distribution,  Chi-squared distribution,  Exponential distribution, $F$-distribution, Beta distribution, Continuous uniform distribution, Generalized Cauchy distribution.

Clearly, $\tau_Y(Y)$ is bounded from above if the support of $p_Y$ is a finite interval. On the other hand, $\tau_Y(Y)$ is bounded from below if $\alpha_1\geq 0,\alpha_2=0$ and $\alpha_3>0.$
%Inverse-gamma distribution,Inverse-chi-squared distribution,
%\begin{enumerate}
%\item Normal distribution with variance $\sigma^2:$ $\tau_Y(Y)=\sigma^2$
%\item Generalized Cauchy distribution $p(x)=c(1+x^2)^{-\beta}$ $\tau_Y(Y)=\frac{1+Y^2}{2\beta-2}$
%\item Uniform distribution on $[0,1]:$ $\tau_Y(Y)=\frac{1}{2}Y(1-Y)$
%\item Beta distribution with parameters $\alpha,\beta>0:$ $\tau_Y(Y)=(\alpha+\beta)^{-1}Y(1-Y)$
%\end{enumerate}
\end{exam}
\begin{exam}[functions of a random variable] Let $Y$ be as in Definition \ref{klfa}.  We consider the random variable $X:=f(Y),$ where $f:\mathbb{R}\to \mathbb{R}$ is a differentiable function such that $f'>0,$ $\E[f(Y)]=0$ and $\E|f(Y)|<\infty.$
By straightforward computations, we have
$$\tau_{X}(X)=f'(f^{-1}(X))\frac{\int_{f^{-1}(X)}^\infty f(y)p_Y(y)dy}{p_Y(f^{-1}(X))}.$$
Then, by using Corollary 2.1 in \cite{Saumard2018}, we have the following relation
\begin{equation}\label{pnj}
(\inf\limits_{x\in \mathbb{R}}f'(x))^2\tau_Y(f^{-1}(X))\leq \tau_{X}(X)\leq (\sup\limits_{x\in \mathbb{R}}f'(x))^2\tau_Y(f^{-1}(X)).\end{equation}
\end{exam}
\begin{exam}[Gaussian functionals] Let $Z=(Z_1,\cdots,Z_n)$ be a standard Gaussian vector. Also let $h:\mathbb R^n \to \mathbb R$ be a differentiable function such that $h$ and their derivatives have subexponential growth at infinity. We consider the random variable $Y=h(Z)$ and define
$$u(z):=\int_0^1\E\left[\sum\limits_{k=1}^n\partial_k h(z)\partial_k h(\alpha z+\sqrt{1-\alpha^2}Z)\right]d\alpha,\,\,\,z\in \mathbb{R}^n.$$
It follows from Lemma 2.1.4 in \cite{Adler2007} that
$$\E[f(Y)(Y-\E[Y])]= E[f'(Y)u(Z)]$$
for any differentiable function $f:\mathbb{R}\to \mathbb{R}$ with compact support. On the other hand, by Lemma \ref{kflo}, we have
$$\E[f(Y)(Y-\E[Y])]= E[f'(Y)\tau_Y(Y)].$$
So we get the following representation for Stein kernel
$$\tau_Y(Y)=\E[u(Z)|Y].$$
Consequently, $\tau_Y(Y)$ is bounded from above if $\sup\limits_{z\in \mathbb{R}^n}|\partial_k h(z)|<\infty$ and $\tau_Y(Y)$ is bounded from below if $\inf\limits_{z\in \mathbb{R}^n}\partial_k h(z)>0$ for all $1\leq k\leq n.$
\end{exam}
%\begin{exam}[Malliavin differentiable random variables]
%
%\end{exam}

\begin{rem}Let $\phi$ be the density of standard normal random variable $Z$ and $\Phi(x)=\int_{-\infty}^x\phi(y)dy.$ We can write $X_k=f(Z):=\nu_k^{-1}(\Phi(Z)),$ where $\nu_k$ denotes the cumulative distribution function of $X_k.$ We assume that the ratio of the densities satisfies
$$c\leq \frac{\phi(\Phi^{-1}(y))}{p_k(\nu_k^{-1}(y))}\leq C\,\,\forall\,y\in [0,1]$$
for some $C\geq c>0.$ Note that $\tau_Z(x)=1$ and $f'(x)=\frac{\phi(x)}{p_k(\nu_k^{-1}(\Phi(x)))},\,\,x\in \mathbb{R}.$ Hence, it follows from the relation (\ref{pnj}) that
$$c^2\leq \tau_k(X_k)\leq C^2.$$
%$\beta:=\max\big\{\sup\limits_{k\geq 1}\E|X_k|^8,\sup\limits_{k\geq 1}\E|\tau_k(X_k)|^8,\sup\limits_{k\geq 1}\E|\tau'_k(X_k)|^8\big\}<\infty,$
\end{rem}

\section{General Fisher information bounds}\label{89jk}
In this section, we provide two general bounds on the Fisher information distance $I(F\|Z)$ of nonlinear statistics $F\in \mathcal{C}_{\mathbb{X}_n}^2.$ Hereafter, we always denote by $Z$ the standard normal random variable.
\subsection{Bounds using martingale-type decomposition}
Our idea here is to write $F$ as a sum of random variables, and then apply the covariance formula (\ref{key}) with respect to each random variable $X_k.$
Let us start with the following definition.%where $m,p\geq 1$ are integer numbers.
\begin{defn}\label{hkd}Let $F\in \mathcal{C}_{\mathbb{X}_n}^0\cap L^1(\Omega).$  The set $\mathcal{M}:=\{\mathcal{M}_1F,\cdots,\mathcal{M}_nF\}$ is called a decomposition of $F$ if

$(i)$ $F-\E[F]=\mathcal{M}_1F+\cdots+\mathcal{M}_nF,$

$(ii)$ $\E_k[\mathcal{M}_kF]=0$ for all $1\leq k\leq n,$ %where $\E_k$ denotes the expectation with respect to $X_k.$

$(iii)$  $\mathcal{M}_kF\in \mathcal{C}_{\mathbb{X}_n}^0\cap L^1(\Omega)$ for all $1\leq k\leq n.$

\noindent We say $\mathcal{M}$ is a differentiable decomposition if it is a decomposition of $F$ and, in addition, we have $\mathcal{M}_kF\in \mathcal{C}_{\mathbb{X}_n}^1$ and $\partial_i\mathcal{M}_kF\in L^1(\Omega)$ for all $1\leq i,k\leq n.$
\end{defn}
Basically, the decomposition of $F$ always exists and is not unique. Indeed, we have the following backward and forward martingale decompositions
$$F-\E[F]=\sum\limits_{k=1}^n\E\left[F-\E_k[F]|X_1,...,X_k\right],$$
$$F-\E[F]=\sum\limits_{k=1}^n\E\left[F-\E_k[F]|X_k,...,X_n\right].$$
Thus, for any $\alpha\in \mathbb{R},$ the functions
$$\mathcal{M}_kF=\mathcal{M}_kF({\mathbb{X}_n}):=\alpha\E\left[F-\E_k[F]|X_1,...,X_k\right]+(1-\alpha)\E\left[F-\E_k[F]|X_k,...,X_n\right],\,\,1\leq k\leq n,$$
form a decomposition of $F.$ Furthermore, this decomposition is differentiable if $F\in \mathcal{C}_{\mathbb{X}_n}^1$ and $\partial_iF\in L^1(\Omega)$ for all $1\leq i\leq n.$
\begin{defn}Let $F\in \mathcal{C}_{\mathbb{X}_n}^1\cap L^1(\Omega)$ and $G\in \mathcal{C}_{\mathbb{X}_n}^0\cap L^1(\Omega).$ Given a decomposition $\{\mathcal{M}_1G,\cdots,\mathcal{M}_nG\}$ of $G,$ we define
$$\Theta_{F,G}=\Theta_{F,G}({\mathbb{X}_n}):=\sum\limits_{k=1}^n\partial_k F\mathcal{L}_k\mathcal{M}_kG.$$
When $F=G,$ we write  $\Theta_{F}$ instead of $\Theta_{F,F}.$
\end{defn}
We have the following representation for the score function of nonlinear statistics.
\begin{prop}\label{klf2} Let $F\in \mathcal{C}_{\mathbb{X}_n}^2\cap L^1(\Omega)$ with $\E[F]=0.$ Suppose that there exists a differentiable decomposition $\mathcal{M}=\{\mathcal{M}_1F,\cdots,\mathcal{M}_nF\}$ of $F$ such that
$\Theta_F\neq 0$ a.s. and $\frac{F}{\Theta_F},\frac{\Theta_{\Theta_F,F}}{\Theta_F^2}\in L^1(\Omega).$ Then the law of $F$ has an absolutely continuous density given by
\begin{equation}\label{d3}
p_F(x)=\E\left[\ind_{\{F>x\}}\left(\frac{F}{\Theta_F}+\frac{\Theta_{\Theta_F,F}}{\Theta_F^2}\right)\right].
\end{equation}
Moreover, we have the following representation for the score function
\begin{equation}\label{d3t13}
\rho_F(x):=p'_F(x)/p_F(x)=-\E\left[\frac{F}{\Theta_F}+\frac{\Theta_{\Theta_F,F}}{\Theta_F^2}\big|F=x\right].
\end{equation}
\end{prop}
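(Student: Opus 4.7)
The plan is to reduce both statements to a single integration-by-parts identity
\begin{equation*}
\E[f'(F)] = \E\!\left[f(F)\left(\frac{F}{\Theta_F} + \frac{\Theta_{\Theta_F,F}}{\Theta_F^2}\right)\right]
\end{equation*}
valid for sufficiently many test functions $f$; from this the density formula (\ref{d3}) and the score formula (\ref{d3t13}) then follow via the standard route already sketched at the end of the introduction.

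First I would establish a one-variable ``local'' identity. For fixed $k$, view $F$ and $\mathcal{M}_k F$ as functions of $X_k$ with the other coordinates held fixed. Since $\E_k[\mathcal{M}_k F]=0$, Lemma \ref{kflo} applied with $\beta=\mathcal{M}_k F$ and a bounded differentiable $\alpha=\alpha(X_k)$ rewrites, after recognising $\mathcal{L}_k\mathcal{M}_k F$ inside the outer $dx$-integral, as
\begin{equation*}
\E_k\!\left[\alpha'(X_k)\,\mathcal{L}_k\mathcal{M}_k F\right]=\E_k\!\left[\alpha(X_k)\,\mathcal{M}_k F\right].
\end{equation*}
This is the core building block.

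Next I would substitute the cleverly chosen $\alpha=f(F)/\Theta_F$, for $f$ smooth and compactly supported. The chain rule yields $\partial_k\alpha=f'(F)\,\partial_k F/\Theta_F-f(F)\,\partial_k\Theta_F/\Theta_F^2$, so summing the local identity over $k$ and taking full expectation turns the three sums $\sum_k \partial_k F\,\mathcal{L}_k\mathcal{M}_k F$, $\sum_k \partial_k\Theta_F\,\mathcal{L}_k\mathcal{M}_k F$, and $\sum_k \mathcal{M}_k F$ into $\Theta_F$, $\Theta_{\Theta_F,F}$, and $F-\E[F]=F$ respectively; a single $\Theta_F/\Theta_F$ cancellation on the left-hand side then leaves exactly the displayed identity, with $H:=F/\Theta_F+\Theta_{\Theta_F,F}/\Theta_F^2$.

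The remaining steps are routine. Writing $f(F)=\int_{-\infty}^F f'(y)\,dy$ for compactly supported smooth $f$ and applying Fubini,
\begin{equation*}
\E[f(F)\,H]=\int f'(y)\,\E\!\left[\ind_{\{F>y\}}H\right]dy;
\end{equation*}
matching this against $\E[f'(F)]=\E[f(F)H]$ forces the law of $F$ to be absolutely continuous with density $p_F(y)=\E[\ind_{\{F>y\}}H]$, which is (\ref{d3}), and one further integration by parts identifies $\rho_F(F)=-\E[H\mid F]$, giving (\ref{d3t13}). The main technical hurdle is that Lemma \ref{kflo} requires $\alpha$ to be bounded, whereas $f(F)/\Theta_F$ need not be, since $\Theta_F$ may approach $0$. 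I would handle this by truncating with a cutoff $\chi_\varepsilon$ keeping $|\Theta_F|$ bounded below, applying the local identity to $\alpha=f(F)\chi_\varepsilon(\Theta_F)/\Theta_F$, and letting $\varepsilon\downarrow 0$ via dominated convergence; the hypotheses $\Theta_F\neq 0$ a.s.\ and $F/\Theta_F,\Theta_{\Theta_F,F}/\Theta_F^2\in L^1(\Omega)$ are exactly what is required to make this limit pass.
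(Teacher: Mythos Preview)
Your proposal is correct and follows essentially the same route as the paper: apply Lemma~\ref{kflo} coordinate-wise with $\beta=\mathcal{M}_kF$, choose $\alpha$ to be (a regularised version of) $f(F)/\Theta_F$, sum over $k$ to recover $\Theta_F$, $\Theta_{\Theta_F,F}$ and $F$, pass to the limit in the regularisation, and then deduce (\ref{d3}) and (\ref{d3t13}) by the Fubini argument you outline. The only cosmetic difference is the regularisation device: the paper replaces $1/\Theta_F$ by the smooth $\Theta_F/(\Theta_F^2+\varepsilon)$, whose $x_k$-derivative yields exactly the two terms needed and nothing extra, whereas your cutoff $\chi_\varepsilon(\Theta_F)/\Theta_F$ produces an additional term $f(F)\chi_\varepsilon'(\Theta_F)\,\Theta_{\Theta_F,F}/\Theta_F$ that you must separately show vanishes as $\varepsilon\downarrow0$ (it does, via the same $L^1$ hypotheses, since $\chi_\varepsilon'$ is supported where $|\Theta_F|\asymp\varepsilon$).
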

\begin{proof} The proof is broken up into two steps.

\noindent{\it Step 1.} We claim that
\begin{align}
\E[f'(F)]&=\E\left[f(F)\left(\frac{F}{\Theta_F}+\frac{\Theta_{\Theta_F,F}}{\Theta_F^2}\right)\right]\label{lol2}
\end{align}
for any bounded and differentiable function $f:\mathbb{R}\to \mathbb{R}$ with bounded derivative.

Since $\mathcal{M}$ is a differentiable decomposition, this implies that the function $\Theta_F$ is differentiable. Hence, for every $\varepsilon>0,$ the functions $\alpha(x):=\frac{f(F)\Theta_F}{\Theta_F^2+\varepsilon}({\mathbb{X}_n},X_k=x)$ and $\beta(x):=\mathcal{M}_kF({\mathbb{X}_n},X_k=x)$ satisfy the conditions of Lemma \ref{kflo}. Applying the covariance formula (\ref{key}) yields
\begin{align*}
&\E_k\left[\frac{f(F)\Theta_F}{\Theta_F^2+\varepsilon}\mathcal{M}_kF\right]=\int_{-\infty}^{\infty}\left(\frac{f'(F)\Theta_F\partial_k F+f(F)\partial_k\Theta_F}{\Theta_F^2+\varepsilon}-\frac{2f(F)\Theta_F^2\partial_k\Theta_F}{(\Theta_F^2+\varepsilon)^2}\right)({\mathbb{X}_n},X_k=x)\\
&\hspace{9cm}\times
\left(\int_{x}^{\infty} \mathcal{M}_kF({\mathbb{X}_n},X_k=y)dy\right)dx\\
&=\E_k\left[\left(\frac{f'(F)\Theta_F\partial_k F+f(F)\partial_k\Theta_F}{\Theta_F^2+\varepsilon}-\frac{2f(F)\Theta_F^2\partial_k\Theta_F}{(\Theta_F^2+\varepsilon)^2}\right)\mathcal{L}_k\mathcal{M}_kF\right]\\
&=\E_k\left[\frac{f'(F)\Theta_F}{\Theta_F^2+\varepsilon}\partial_k F\mathcal{L}_k\mathcal{M}_kF\right]-\E_k\left[\frac{f(F)(\Theta_F^2-\varepsilon)}{(\Theta_F^2+\varepsilon)^2} \partial_k\Theta_F \mathcal{L}_k\mathcal{M}_kF\right],\,\,1\leq k\leq n.
\end{align*}
Hence, by the independence of random variables $X_k's,$ we deduce
\begin{align*}
\E\left[\frac{f(F)\Theta_FF}{\Theta_F^2+\varepsilon}\right]&=\E\left[\frac{f(F)\Theta_F\sum\limits_{k=1}^n \mathcal{M}_kF}{\Theta_F^2+\varepsilon}\right]
=\sum\limits_{k=1}^n\E\left[\E_k\left[\frac{f(F)\Theta_F}{\Theta_F^2+\varepsilon}\mathcal{M}_kF\right]\right]\\
&=\sum\limits_{k=1}^n\E\left[\frac{f'(F)\Theta_F}{\Theta_F^2+\varepsilon}\partial_k F\mathcal{L}_k\mathcal{M}_kF\right]-\sum\limits_{k=1}^n\E\left[\frac{f(F)(\Theta_F^2-\varepsilon)}{(\Theta_F^2+\varepsilon)^2}\partial_k\Theta_F \mathcal{L}_k\mathcal{M}_kF\right]\\
&=\E\left[f'(F)\frac{\Theta_F^2}{\Theta_F^2+\varepsilon}\right]-\E\left[f(F)\frac{(\Theta_F^2-\varepsilon)\Theta_{\Theta_F,F}}{(\Theta_F^2+\varepsilon)^2}\right].
\end{align*}
We let $\varepsilon\to 0$ and, by the dominated convergence theorem, we obtain
$$\E\left[\frac{f(F)F}{\Theta_F}\right]=
\E\left[f'(F)\right]-\E\left[f(F)\frac{\Theta_{\Theta_F,F}}{\Theta_F^2}\right].$$
Rearranging the terms gives us the claim (\ref{lol2}).

\noindent{\it Step 2.} We take $f(x)=\int_{-\infty}^xg(y)dy,$ where  $g$ is a continuous function with compact support.  Note that $f$ is necessarily bounded. Using the relation (\ref{lol2}) and Fubini's theorem we obtain
\begin{align*}
\E[g(F)]&=\E\left[\int_{-\infty}^Fg(x)dx\left(\frac{F}{\Theta_F}+\frac{\Theta_{\Theta_F,F}}{\Theta_F^2}\right)\right]\\
&=\int_{-\infty}^\infty g(x)\E\left[\ind_{\{F>x\}}\left(\frac{F}{\Theta_F}+\frac{\Theta_{\Theta_F,F}}{\Theta_F^2}\right)\right]dx.
\end{align*}
This implies that the density $p_F$ of $F$ exists and is given by the representation (\ref{d3}). Furthermore, we can rewrite (\ref{d3}) as follows
\begin{align*}
p_{F}\left( x\right)&=\E\left[ \mathbf{1}_{\left\{ F>x\right\} }\E\left[\frac{F}{\Theta_F}+\frac{\Theta_{\Theta_F,F}}{\Theta_F^2}\big| F\right]\right]\\
%&=\E\left[ \mathbf{1}_{\left\{ F>x\right\} }w(F)\right]\\
&=\int_x^\infty \E\left[\frac{F}{\Theta_F}+\frac{\Theta_{\Theta_F,F}}{\Theta_F^2}\big| F=y\right]p_{F}(y)dy.
\end{align*}
So $p_F$ is absolutely continuous and the representation (\ref{d3t13}) is verified. The proof of the proposition is complete.
\end{proof}
\begin{rem}The reader can consult \cite{Courtade2016,Nourdin2014,Nourdin2014b,Tulino2006} for various representations of the score function. Note that our formula (\ref{d3t13}) is different from the representations used there.
%have some similarities with the recursive formula developed by Privault in[25] \cite{Nourdinart2016}
\end{rem}
\begin{rem}Using the same arguments as in the proof of (\ref{lol2}) we have
$$\E[f(F)F]=\sum\limits_{k=1}^n \E\left[f(F)\mathcal{M}_kF\right]=\E[f'(F)\Theta_F]$$
for any bounded and differentiable function $f:\mathbb{R}\to \mathbb{R}$ with bounded derivative. Then, we can employ the method developed in \cite{Nourdin2009} to derive the following representation for the density function
$$p_F(x)=\frac{\E|F|}{2g_F(x)}\exp\left(-\int_{0}^z \frac{u}{g_F(u)}du\right),
 $$
 where $g_F(F)=\E[\Theta_F|F].$ However, it is not easy to use this presentation for computing the score function $\rho_F(x).$ This is due to the fact that we do not know how to compute $g'_F(x).$
\end{rem}
As discussed in the introduction, the key step in our approach is to construct a suitable representation for the score function. Let us provide some simple examples to show why the formula (\ref{d3t13}) is a suitable representation for our purpose.
\begin{exam}We assume, for simplicity, the random variables $X_k's$ are identically distributed.

\noindent$(a)$ We consider the normalized sums $Z_n$ defined by (\ref{iklf}). Using the decomposition $\mathcal{M}_kZ_n=\frac{X_k}{\sqrt{n}},1\leq k\leq n$ we obtain
$$\Theta_{Z_n}= \frac{\tau_1(X_1) + \cdots + \tau_n(X_n)}{n}$$
and
$$\Theta_{\Theta_{Z_n},Z_n}= \frac{\tau'_1(X_1)\tau_1(X_1) + \cdots + \tau'_n(X_1)\tau_n(X_n)}{n\sqrt{n}}$$
By the law of large numbers, we have $\Theta_{Z_n}\to 1$ and $\Theta_{\Theta_{Z_n},Z_n}\to 0$ as $n\to\infty.$ Hence, intuitively, $\rho_{Z_n}(Z_n)$ becomes closer to $-Z_n$ as $n$ increases. So we can predict that $Z_n\to Z$ (note that a random variable $X$ is standard normal if and only if its score function is linear, $\rho_X(X)=-X$) and we expect to have $I(Z_n\|Z)\to 0$ as $n\to\infty.$

\noindent$(b)$ We consider the following sums of dependent random variables
$$F_n:=\frac{Y_1+\cdots+Y_n}{\sqrt{n}},\,\,n\geq 1,$$
where $Y_k:=X_kX_{k+1},k\geq 1.$ We can decompose $F_n$ as follows
$$\mathcal{M}_1F_n=\frac{X_1X_2}{2\sqrt{n}},\,\,\,\mathcal{M}_{n+1}F_n=\frac{X_nX_{n+1}}{2\sqrt{n}},$$
$$\mathcal{M}_kF_n=\frac{X_{k-1}X_k+X_kX_{k+1}}{2\sqrt{n}},\,\,2\leq k\leq n.$$
Using this decomposition we obtain
$$\Theta_{F_n}=\frac{1}{2n}X_2^2\tau_1(X_1)+\frac{1}{2n}\sum\limits_{k=2}^n(X_{k-1}+X_{k+1})^2\tau_k(X_k)
+\frac{1}{2n}X_n^2\tau_{n+1}(X_{n+1}).$$
We observe that $\Theta_{F_n}$ is a sum of $2$-dependent random variables. Hence, by the law of large numbers, $\Theta_{F_n}\to 1$  as $n\to\infty.$ Similarly, we also have $\Theta_{\Theta_{F_n},F_n}\to 0$ as $n\to\infty.$ So, once again, the score function $\rho_{F_n}(F_n)$ becomes closer to $-F_n$ as $n$ increases and we expect to have $I(F_n\|Z)\to 0$ as $n\to\infty.$
\end{exam}
We now are in a position to provide a general bound on the Fisher information distance.
\begin{thm}\label{gko}
Let $F\in \mathcal{C}_{\mathbb{X}_n}^2$ be such that $\E[F]=0$ and ${\rm Var}(F)=1.$ Then, for any differentiable decomposition $\{\mathcal{M}_1F,\cdots,\mathcal{M}_nF\}$ of $F$ with $\Theta_F\neq 0$ a.s. and for $p,q,r,s,t>1$ with $\frac{1}{p}+\frac{1}{q}+\frac{1}{r}=\frac{1}{s}+\frac{1}{t}=1,$ we have
\begin{equation}\label{lfm3}
I(F\|Z)\leq 2\|F\|_{2p}^2\|\Theta_F^{-1}\|_{2q}^2\|1-\Theta_F\|_{2r}^2+2\bigg\|\frac{\sum\limits_{k=1}^n\frac{|\mathcal{L}_k\mathcal{M}_kF|^2}{\tau_k(X_k)}}{\Theta_F^4}\bigg\|_s
\big\|\sum\limits_{k=1}^n|\partial_k\Theta_F|^2\tau_k(X_k)\big\|_t,
%I(F\|Z)\leq 2\bigg\|\frac{F}{\Theta_F}\bigg\|_4^2\|1-\Theta_F\|_4^2+2\bigg\|\frac{\sum\limits_{k=1}^n\frac{|\mathcal{L}_k\mathcal{M}_kF|^2}{\tau_k(X_k)}}{\Theta_F^4}\bigg\|_2
%\big\|\sum\limits_{k=1}^n|\partial_k\Theta_F|^2\tau_k(X_k)\big\|_2,
\end{equation}
provided that the norms in the right hand side are finite.
\end{thm}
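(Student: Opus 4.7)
The plan is to combine Proposition \ref{klf2}'s score function representation with conditional Jensen, then decouple the two resulting terms using $(a+b)^2\le 2a^2+2b^2$, and finally apply Hölder's inequality (twice, with a Cauchy-Schwarz step in between to split the $\Theta_{\Theta_F,F}$ numerator into pieces weighted by $\tau_k(X_k)$).

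First I would start from the definition (\ref{vail}), $I(F\|Z)=\E[(\rho_F(F)+F)^2]$, and substitute the representation (\ref{d3t13}) from Proposition \ref{klf2}. Rewriting
$$\rho_F(F)+F=\E\!\left[F-\frac{F}{\Theta_F}-\frac{\Theta_{\Theta_F,F}}{\Theta_F^{2}}\,\Big|\,F\right],$$
the conditional Jensen inequality yields
$$I(F\|Z)\le \E\!\left[\left(\frac{F(\Theta_F-1)}{\Theta_F}-\frac{\Theta_{\Theta_F,F}}{\Theta_F^{2}}\right)^{2}\right].$$
Now $(a-b)^{2}\le 2a^{2}+2b^{2}$ separates this into the two summands that will give the two terms of (\ref{lfm3}).

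Next I would dispatch the first summand, $2\,\E\bigl[F^{2}(\Theta_F-1)^{2}/\Theta_F^{2}\bigr]$, by a direct triple Hölder with exponents $(p,q,r)$ satisfying $\tfrac{1}{p}+\tfrac{1}{q}+\tfrac{1}{r}=1$, grouping the factors as $F^{2}\cdot \Theta_F^{-2}\cdot (1-\Theta_F)^{2}$. This produces exactly $2\|F\|_{2p}^{2}\|\Theta_F^{-1}\|_{2q}^{2}\|1-\Theta_F\|_{2r}^{2}$ after taking square roots in the $L^{p}$/$L^{q}$/$L^{r}$ norms.

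For the second summand, the key trick is to use the Stein kernels $\tau_k(X_k)$ as weights. Writing
$$\Theta_{\Theta_F,F}=\sum_{k=1}^{n}\bigl(\partial_k\Theta_F\sqrt{\tau_k(X_k)}\bigr)\cdot\bigl(\mathcal{L}_k\mathcal{M}_kF/\sqrt{\tau_k(X_k)}\bigr),$$
Cauchy-Schwarz in $k$ yields
$$\Theta_{\Theta_F,F}^{\,2}\le\Bigl(\sum_{k=1}^{n}|\partial_k\Theta_F|^{2}\tau_k(X_k)\Bigr)\Bigl(\sum_{k=1}^{n}|\mathcal{L}_k\mathcal{M}_kF|^{2}/\tau_k(X_k)\Bigr).$$
Dividing by $\Theta_F^{4}$ and applying Hölder with the dual exponents $(s,t)$ then gives the second term on the right-hand side of (\ref{lfm3}). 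Adding the two estimates completes the argument.

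The step I expect to be the most delicate is not any single inequality but the bookkeeping around $\tau_k(X_k)$: one must know $\tau_k(X_k)>0$ a.s.\ to justify dividing by $\sqrt{\tau_k(X_k)}$ in the Cauchy-Schwarz step, and this is precisely what the standing assumption that each $p_k$ is supported on a connected interval guarantees (together with $\E[\tau_k(X_k)]={\rm Var}(X_k)=1$ noted in Section~\ref{klf5}). Beyond that, all the integrability needed to justify Jensen's inequality and the Hölder applications is packaged into the finiteness of the norms appearing in (\ref{lfm3}), which is assumed in the hypothesis; so no further regularisation (beyond what was already done with the $\varepsilon$-truncation inside Proposition \ref{klf2}) is required here.
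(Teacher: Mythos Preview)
Your proposal is correct and follows essentially the same route as the paper: invoke the score representation from Proposition~\ref{klf2}, apply conditional Jensen and the split $(a+b)^2\le 2a^2+2b^2$, then bound the first term by a triple H\"older with exponents $(p,q,r)$ and the second by the $\tau_k$-weighted Cauchy--Schwarz in $k$ followed by H\"older with exponents $(s,t)$. Your remark on $\tau_k(X_k)>0$ a.s.\ is the only point the paper leaves implicit, so your write-up is in fact slightly more careful there.
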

\begin{proof}By the representation formula (\ref{d3t13}) we deduce
\begin{align}
I(F\|Z)&=\E[(\rho_F(F)+F)^2]\notag\\
&\leq\E\left|\frac{F}{\Theta_F}+\frac{\Theta_{\Theta_F,F}}{\Theta_F^2}-F\right|^2\notag\\
&\leq 2\E\left|\frac{F(1-\Theta_F)}{\Theta_F}\right|^2+2\E\left|\frac{\Theta_{\Theta_F,F}}{\Theta_F^2}\right|^2.\label{jkfo9}
\end{align}
By H\"older's inequality we have
\begin{equation}\label{jkfo9a}
\E\left|\frac{F(1-\Theta_F)}{\Theta_F}\right|^2\leq \|F\|_{2p}^2\|\Theta_F^{-1}\|_{2q}^2\|1-\Theta_F\|_{2r}^2.
\end{equation}
By using the Cauchy-Schwarz inequality we obtain
\begin{align*}
|\Theta_{\Theta_F,F}|=\big|\sum\limits_{k=1}^n\partial_k\Theta_F\mathcal{L}_k\mathcal{M}_kF\big|
\leq\sqrt{\sum\limits_{k=1}^n|\partial_k\Theta_F|^2\tau_k(X_k)\sum\limits_{k=1}^n\frac{|\mathcal{L}_k\mathcal{M}_kF|^2}{\tau_k(X_k)}},
\end{align*}
and hence,
\begin{align}
\E\left|\frac{\Theta_{\Theta_F,F}}{\Theta_F^2}\right|^2&\leq \E\bigg[\sum\limits_{k=1}^n|\partial_k\Theta_F|^2\tau_k(X_k)\frac{\sum\limits_{k=1}^n\frac{|\mathcal{L}_k\mathcal{M}_kF|^2}{\tau_k(X_k)}}{\Theta_F^4}
\bigg]\notag \\
&\leq \bigg\|\frac{\sum\limits_{k=1}^n\frac{|\mathcal{L}_k\mathcal{M}_kF|^2}{\tau_k(X_k)}}{\Theta_F^4}\bigg\|_s
\big\|\sum\limits_{k=1}^n|\partial_k\Theta_F|^2\tau_k(X_k)\big\|_t.\notag
\end{align}
This, together with (\ref{jkfo9}) and (\ref{jkfo9a}), gives us the bound (\ref{lfm3}). The proof of the theorem is complete.
\end{proof}
\subsection{Bounds using approximate statistics}
In the previous subsection, the Fisher information bound strongly depends on the behavior of $\Theta_F.$ However, when $F$ is a strong nonlinear statistic, it is not easy to compute $\Theta_F.$ In this subsection, we give an attempt to handle such cases. Our idea is to approximate $F$ by a nicer statistic. It will be an interesting problem to extend the method to other approximations. Here we simply consider the case where $F$ can be approximated by a sum of independent random variables, that is,
$$F\approx\sum\limits_{k=1}^n g_k(X_k)\,\,\text{in}\,\,L^2(\Omega).$$
Note that this is the case of many fundamental statistics such as the functions of sample means, $U$-statistics and $L$-statistics, etc. Now, similarly as in the previous subsection, we first construct the score function of $F.$ We have the following.
\begin{prop}\label{klf22} Let $F\in \mathcal{C}_{\mathbb{X}_n}^2\cap L^2(\Omega).$  Given continuous functions $g_k,1\leq k\leq n$ satisfying $\E|g_k(X_k)|<\infty$ and $\E[g_k(X_k)]=0,$ we define
$$\nabla_F:=\sum\limits_{k=1}^n\partial_k F\mathcal{L}_kg_k,$$
where $\mathcal{L}_kg_k=\mathcal{L}_kg_k(X_k):=\frac{\int_{X_k}^\infty g_k(x)p_k(x)dx}{p_k(X_k)}.$ Assume that $\nabla_F\neq 0$ a.s. and the random variables $\frac{\sum\limits_{k=1}^n g_k(X_k)}{\nabla_F},\frac{\sum\limits_{k=1}^n \partial_k\nabla_F\mathcal{L}_kg_k}{\nabla_F^2}$ are in $L^1(\Omega).$ Then the law of $F$ has an absolutely continuous density given by
\begin{equation}\label{d3sa}
p_F(x)=\E\left[\ind_{\{F>x\}}\left(\frac{\sum\limits_{k=1}^n g_k(X_k)}{\nabla_F}+\frac{\sum\limits_{k=1}^n\partial_k\nabla_F\mathcal{L}_kg_k}{\nabla_F^2}\right)\right].
\end{equation}
Moreover, we have
\begin{equation}\label{d3t13b}
\rho_F(x)=-\E\left[\frac{\sum\limits_{k=1}^n g_k(X_k)}{\nabla_F}+\frac{\sum\limits_{k=1}^n \partial_k\nabla_F\mathcal{L}_kg_k}{\nabla_F^2}\bigg|F=x\right].
\end{equation}
\end{prop}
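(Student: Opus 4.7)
The plan is to mirror the two-step argument used in the proof of Proposition \ref{klf2}, with the simplification that here each $g_k$ depends only on $X_k$ (not on the full vector $\mathbb{X}_n$), so the inner $\beta$ needed for Lemma \ref{kflo} is just $g_k$ itself. In the first step I will establish the integration-by-parts-type identity
\begin{equation*}
\E[f'(F)] = \E\left[f(F)\left(\frac{\sum_{k=1}^n g_k(X_k)}{\nabla_F} + \frac{\sum_{k=1}^n \partial_k\nabla_F\,\mathcal{L}_k g_k}{\nabla_F^2}\right)\right]
\end{equation*}
for every bounded differentiable $f$ with bounded derivative. In the second step I will plug in $f(x)=\int_{-\infty}^x g(y)\,dy$ for $g$ continuous with compact support and use Fubini exactly as in Proposition \ref{klf2} to read off (\ref{d3sa}), then use conditioning plus differentiation to extract (\ref{d3t13b}).

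For the first step I will regularise by $\varepsilon>0$ and apply Lemma \ref{kflo} with respect to $X_k$, taking
\[
\alpha(x) := \frac{f(F)\,\nabla_F}{\nabla_F^2+\varepsilon}(\mathbb{X}_n,X_k=x), \qquad \beta(x) := g_k(x).
\]
Since $\E[g_k(X_k)]=0$, the inner integral in (\ref{key}) collapses to $\int_x^\infty g_k(y)p_k(y)\,dy = \mathcal{L}_k g_k(x)\,p_k(x)$, giving
\[
\E_k\!\left[\frac{f(F)\,\nabla_F}{\nabla_F^2+\varepsilon}\,g_k(X_k)\right] = \E_k\!\left[\frac{f'(F)\,\partial_kF\,\nabla_F\,\mathcal{L}_k g_k}{\nabla_F^2+\varepsilon}\right] - \E_k\!\left[\frac{f(F)(\nabla_F^2-\varepsilon)\,\partial_k\nabla_F\,\mathcal{L}_k g_k}{(\nabla_F^2+\varepsilon)^2}\right].
\]
Summing over $k$, the crucial algebraic collapse is that $\sum_k \partial_kF\,\mathcal{L}_k g_k = \nabla_F$ by definition, which turns the first right-hand side into $\E\bigl[f'(F)\,\nabla_F^2/(\nabla_F^2+\varepsilon)\bigr]$. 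Taking $\varepsilon\to 0$ and using dominated convergence (here the hypotheses $\nabla_F\neq 0$ a.s.\ and the $L^1$ integrability of the two ratios play their essential role) yields the claimed identity after rearrangement.

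The second step is routine once the identity is in hand: Fubini converts $\E[g(F)] = \E\bigl[\int_{-\infty}^F g(x)\,dx\cdot H\bigr]$ (with $H$ the bracketed random variable above) into $\int g(x)\E[\mathbf{1}_{\{F>x\}}H]\,dx$, identifying $p_F(x)=\E[\mathbf{1}_{\{F>x\}}H]$. Rewriting $\mathbf{1}_{\{F>x\}}H = \mathbf{1}_{\{F>x\}}\E[H\mid F]$ and expressing the outer expectation as an integral against $p_F$ shows that $p_F$ is absolutely continuous with $p_F'(x) = -\E[H\mid F=x]\,p_F(x)$, which is exactly (\ref{d3t13b}).

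The main obstacle I anticipate is the justification of the $\varepsilon\to 0$ passage: one has to exhibit an integrable dominating function for both terms appearing after the application of Lemma \ref{kflo}, which is precisely what the hypothesis that $\sum g_k(X_k)/\nabla_F$ and $\sum \partial_k\nabla_F\,\mathcal{L}_k g_k/\nabla_F^2$ belong to $L^1(\Omega)$ is designed to provide (together with the boundedness of $f$ and $f'$). A secondary point to check carefully is that for each fixed $\varepsilon>0$ the function $\alpha$ really satisfies the boundedness-and-differentiability hypothesis of Lemma \ref{kflo} in the variable $x=X_k$; this uses $F\in\mathcal{C}_{\mathbb{X}_n}^2$ so that $\nabla_F$ is differentiable in $X_k$, together with the regularisation by $\varepsilon$, which keeps $\nabla_F/(\nabla_F^2+\varepsilon)$ bounded by $1/(2\sqrt{\varepsilon})$ uniformly.
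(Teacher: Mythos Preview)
Your proposal is correct and follows essentially the same argument as the paper: the paper likewise applies Lemma \ref{kflo} with $\alpha(x)=\frac{f(F)\nabla_F}{\nabla_F^2+\varepsilon}(\mathbb{X}_n,X_k=x)$ and $\beta(x)=g_k(x)$, sums over $k$ using $\sum_k\partial_kF\,\mathcal{L}_kg_k=\nabla_F$, passes to the limit $\varepsilon\to 0$ via dominated convergence under the stated $L^1$ hypotheses, and then refers to Step~2 of Proposition \ref{klf2} for the density and score-function identification. Your additional remarks on why $\alpha$ meets the hypotheses of Lemma \ref{kflo} (differentiability from $F\in\mathcal{C}_{\mathbb{X}_n}^2$, boundedness via $|\nabla_F|/(\nabla_F^2+\varepsilon)\le 1/(2\sqrt{\varepsilon})$) are correct and make explicit what the paper leaves implicit.
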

\begin{proof} Let $f:\mathbb{R}\to \mathbb{R}$ be a bounded and differentiable function with bounded derivative. Fixed $\varepsilon>0,$ we apply Lemma \ref{kflo} to the functions $\alpha(x)=\frac{f(F)\nabla_F}{\nabla_F^2+\varepsilon}({\mathbb{X}_n},X_k=x),\beta(x)=g_k(x)$ and we obtain
\begin{align*}
&\E_k\left[\frac{f(F)\nabla_F}{\nabla_F^2+\varepsilon}g_k(X_k)\right]\\
&=\E_k\left[\left(\frac{f'(F)\nabla_F\partial_k F+f(F)\partial_k\nabla_F}{\nabla_F^2+\varepsilon}-\frac{2f(F)\nabla_F^2\partial_k\nabla_F}{(\nabla_F^2+\varepsilon)^2}\right)\mathcal{L}_kg_k\right]\\
&=\E_k\left[\frac{f'(F)\nabla_F}{\nabla_F^2+\varepsilon}\partial_k F\mathcal{L}_kg_k\right]-\E_k\left[\frac{f(F)(\nabla_F^2-\varepsilon)}{(\nabla_F^2+\varepsilon)^2} \partial_k\nabla_F \mathcal{L}_kg_k\right],\,\,1\leq k\leq n.
\end{align*}
As a consequence, we deduce
\begin{align*}
\E\left[\frac{f(F)\nabla_F\sum\limits_{k=1}^n g_k(X_k)}{\nabla_F^2+\varepsilon}\right]&=\sum\limits_{k=1}^n\E\left[\E_k\left[\frac{f(F)\nabla_F}{\nabla_F^2+\varepsilon}g_k(X_k)\right]\right]\\
&=\E\left[\frac{f'(F)\nabla_F^2}{\nabla_F^2+\varepsilon}\right]-\E\left[\frac{f(F)(\nabla_F^2-\varepsilon)}{(\nabla_F^2+\varepsilon)^2}\sum\limits_{k=1}^n  \partial_k\nabla_F \mathcal{L}_kg_k\right].
\end{align*}
The condition $\frac{\sum\limits_{k=1}^n g_k(X_k)}{\nabla_F},\frac{\sum\limits_{k=1}^n \partial_k\nabla_F\mathcal{L}_kg_k}{\nabla_F^2}\in L^1(\Omega)$ allows us to use the dominated convergence theorem, and by letting $\varepsilon\to 0,$ we get
$$\E[f'(F)]=\E\left[f(F)\left(\frac{\sum\limits_{k=1}^n g_k(X_k)}{\nabla_F}+\frac{\sum\limits_{k=1}^n \partial_k\nabla_F\mathcal{L}_kg_k}{\nabla_F^2}\right)\right].$$
The remaining of the proof is similar to {\it Step 2} in the proof of Proposition \ref{klf2}. We omit the details.
\end{proof}
Thanks to Proposition \ref{klf22} we deduce the following bound on the Fisher information distance.
\begin{thm}\label{8uh4}Let $F\in \mathcal{C}_{\mathbb{X}_n}^2\cap L^2(\Omega)$ be such that $\E[F]=0$ and ${\rm Var}(F)=1.$ For $p,q,r,s,t>1$ with $\frac{1}{p}+\frac{1}{q}+\frac{1}{r}=\frac{1}{s}+\frac{1}{t}=1$ and  for any continuous functions $g_k,1\leq k\leq n$ satisfying $\E|g_k(X_k)|<\infty,\E[g_k(X_k)]=0$ and $\nabla_F:=\sum\limits_{k=1}^n\partial_k F\mathcal{L}_kg_k\neq 0$ a.s. we have
\begin{align}
I(F\|Z)&\leq 3\big\|F-\sum\limits_{k=1}^n g_k(X_k)\big\|_2^2+3\big\|\sum\limits_{k=1}^n g_k(X_k)\big\|_{2p}^2\|\nabla_F^{-1}\|_{2q}^2 \|1-\nabla_F\|_{2r}^2\notag\\
&+3 \|\nabla_F^{-1}\|_{4s}^4\big\|\sum\limits_{k=1}^n \partial_k\nabla_F\mathcal{L}_kg_k\big\|_{2t}^2,\label{olfm3a}
\end{align}
provided that the norms in the right hand side are finite.
\end{thm}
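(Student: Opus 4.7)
The plan is to start from the score-function representation supplied by Proposition \ref{klf22} and then peel off three pieces, each matched to one of the three summands in \eqref{olfm3a}. By \eqref{d3t13b} and the conditional Jensen inequality applied to the squared deviation of the score,
\begin{equation*}
I(F\|Z) = \E[(\rho_F(F)+F)^2] \leq \E\left[\left(\frac{\sum_{k=1}^n g_k(X_k)}{\nabla_F}+\frac{\sum_{k=1}^n \partial_k\nabla_F\,\mathcal{L}_k g_k}{\nabla_F^2}-F\right)^2\right].
\end{equation*}
This reduces the theorem to an $L^2$-bound on the integrand, and it is exactly the analogue of the chain of inequalities \eqref{jkfo9}–\eqref{jkfo9a} used in the proof of Theorem \ref{gko}.

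Next I would perform the algebraic split
\begin{equation*}
\frac{\sum_k g_k(X_k)}{\nabla_F}-F \;=\; -\Bigl(F-\sum_k g_k(X_k)\Bigr) + \Bigl(\sum_k g_k(X_k)\Bigr)\frac{1-\nabla_F}{\nabla_F},
\end{equation*}
so that the quantity inside the square decomposes into three summands: the approximation error $F-\sum_k g_k(X_k)$, the ``linearization defect'' $(\sum_k g_k(X_k))(1-\nabla_F)/\nabla_F$, and the correction term $(\sum_k \partial_k\nabla_F\,\mathcal{L}_k g_k)/\nabla_F^2$. Applying the elementary inequality $(a+b+c)^2\le 3(a^2+b^2+c^2)$ then yields three expectations, with the factor $3$ appearing exactly as in \eqref{olfm3a}.

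Finally I would estimate each of the three expectations by H\"older's inequality. The first requires no further work and produces $3\|F-\sum_k g_k(X_k)\|_2^2$. For the second, applied with exponents $(p,q,r)$ satisfying $1/p+1/q+1/r=1$ to the factorization $|\sum_k g_k(X_k)|^2\cdot|\nabla_F|^{-2}\cdot|1-\nabla_F|^2$, one obtains precisely $3\|\sum_k g_k(X_k)\|_{2p}^2 \|\nabla_F^{-1}\|_{2q}^2 \|1-\nabla_F\|_{2r}^2$. For the third, H\"older with exponents $(s,t)$ satisfying $1/s+1/t=1$ applied to $|\nabla_F|^{-4}\cdot|\sum_k \partial_k\nabla_F\,\mathcal{L}_k g_k|^2$ produces the last summand $3\|\nabla_F^{-1}\|_{4s}^4 \|\sum_k \partial_k\nabla_F\,\mathcal{L}_k g_k\|_{2t}^2$.

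Since the representation of the score function in Proposition \ref{klf22} is already in hand, there is no essential obstacle: the argument is a short algebraic decomposition followed by two applications of H\"older. The only point requiring a little care is ensuring that all the norms appearing are finite so that the dominated-convergence / Jensen step is justified, but this is guaranteed by the standing hypothesis that the right-hand side of \eqref{olfm3a} is finite together with the integrability assumptions already imposed in Proposition \ref{klf22}.
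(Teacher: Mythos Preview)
Your proposal is correct and follows essentially the same route as the paper: invoke the score representation \eqref{d3t13b} from Proposition~\ref{klf22}, use conditional Jensen to pass to an $L^2$ bound, perform exactly the algebraic split you wrote to obtain three summands, apply $(a+b+c)^2\le 3(a^2+b^2+c^2)$, and finish with the two H\"older estimates with exponents $(p,q,r)$ and $(s,t)$. The paper does not elaborate further on the integrability check either, relying as you do on the hypothesis that the right-hand side of \eqref{olfm3a} is finite.
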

\begin{proof} Using the definition (\ref{vail}) and the score function $\rho_F$ given by (\ref{d3t13b}) we obtain
\begin{align*}
I(F\|Z)&\leq \E\left|\frac{\sum\limits_{k=1}^n g_k(X_k)}{\nabla_F}+\frac{\sum\limits_{k=1}^n \partial_k\nabla_F\mathcal{L}_kg_k}{\nabla_F^2}-F\right|^2\\
&\leq \E\left|\frac{\sum\limits_{k=1}^n g_k(X_k)(1-\nabla_F)}{\nabla_F}+\frac{\sum\limits_{k=1}^n \partial_k\nabla_F\mathcal{L}_kg_k}{\nabla_F^2}+\sum\limits_{k=1}^n g_k(X_k)-F\right|^2.
\end{align*}
By the fundamental inequality $(a_1+a_2+a_3)^2\leq 3(a_1^2+a_2^2+a_3^2),$ we deduce
\begin{align}
I(F\|Z) &\leq 3\E\left|F-\sum\limits_{k=1}^n g_k(X_k)\right|^2\notag\\
&+ 3\E\left|\frac{\sum\limits_{k=1}^n g_k(X_k)(1-\nabla_F)}{\nabla_F}\right|^2+3\E\left|\frac{\sum\limits_{k=1}^n \partial_k\nabla_F\mathcal{L}_kg_k}{\nabla_F^2}\right|^2.\label{olfm3}
\end{align}
Furthermore, by using H\"older's inequality, we get the following estimates
$$
\E\left|\frac{\sum\limits_{k=1}^n g_k(X_k)(1-\nabla_F)}{\nabla_F}\right|^2\leq\big\|\sum\limits_{k=1}^n g_k(X_k)\big\|_{2p}^2\|\nabla_F^{-1}\|_{2q}^2 \|1-\nabla_F\|_{2r}^2
$$
and
\begin{align*}
\E\left|\frac{\sum\limits_{k=1}^n \partial_k\nabla_F\mathcal{L}_kg_k}{\nabla_F^2}\right|^2
&\leq \|\nabla_F^{-1}\|_{4s}^4\big\|\sum\limits_{k=1}^n \partial_k\nabla_F\mathcal{L}_kg_k\big\|_{2t}^2.
\end{align*}
Those estimates, together with (\ref{olfm3}), give us the desired bound (\ref{olfm3a}). The proof of the theorem is complete.
\end{proof}

\section{Applications}\label{89jk2}
In this section, we first apply our Fisher information bounds to two familiar classes of nonlinear statistics: $(i)$ Quadratic forms in Subsection \ref{89a} and $(ii)$ Functions of sample means in Subsection \ref{89b}. Although the central limit theorem for those statistics have been well studied, it remains an open problem to obtain quantitative bounds on the Fisher information distance. Then, in Subsection \ref{89c}, we discuss the Fisher information bounds for the sums of non-i.i.d random variables. Since our purpose is to illustrate the main features of the method developed in the previous section, we do not try to optimize the constants and moment conditions in our results. We simply chose to use $p=q=4$ and $r=s=t=2$ in Theorems \ref{gko} and \ref{8uh4}.%We also recall that  the random variables $X_k's$ always satisfy the assumptions required in Section \ref{klf5}.

%\subsection{Some useful moment inequalities}
It can be seen that the main tools in applying Theorems \ref{gko} and \ref{8uh4}  are the inequalities for negative and positive moments of nonlinear statistics. Several positive moment inequalities can be found in Chapter 15 of \cite{Boucheron2013}. In this section, we frequently use the following Marcinkiewicz-Zygmund type inequality.
\begin{prop} Let $\mathbb{Y}=(Y_1,\cdots,Y_n)$ be a vector of independent random variables. For any measurable function $U=U(\mathbb{Y})\in L^{p}(\Omega)$ for some $p\geq2,$ we have
\begin{equation}\label{9hj4}
\|U\|_p^2\leq |\E[U]|^2+ (p-1)\sum\limits_{k=1}^n\|U-\E_k[U]\|_p^2,
\end{equation}
where $\E_k$ denotes the expectation with respect to $Y_k.$%$\|.\|_p$ denotes the norm in $L^p(\Omega)$ and
\end{prop}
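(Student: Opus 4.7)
My plan is to derive (\ref{9hj4}) by combining a Doob martingale decomposition with a classical moment inequality for martingale differences. With $\mathcal{F}_0=\{\emptyset,\Omega\}$ and $\mathcal{F}_k=\sigma(Y_1,\dots,Y_k)$ for $1\le k\le n$, I would write
\[
U=\E[U]+\sum_{k=1}^n D_k,\qquad D_k:=\E[U\mid\mathcal{F}_k]-\E[U\mid\mathcal{F}_{k-1}],
\]
so that $(D_k)$ is a martingale difference sequence with respect to $(\mathcal{F}_k)$.

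The first step is to relate each $D_k$ to the quantity $U-\E_k[U]$ that actually appears in (\ref{9hj4}). Since $\E_k[U]$ is a measurable function of $(Y_j)_{j\ne k}$ and the $Y_j$'s are independent, integrating out $Y_{k+1},\dots,Y_n$ against $\mathcal{F}_k$ yields $\E\!\left[\E_k[U]\mid\mathcal{F}_k\right]=\E[U\mid\mathcal{F}_{k-1}]$; hence $D_k=\E\!\left[U-\E_k[U]\mid\mathcal{F}_k\right]$, and conditional Jensen gives $\|D_k\|_p\le\|U-\E_k[U]\|_p$.

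The second step is the sharp martingale moment inequality of Burkholder/Rio/Pinelis type: for any $p\ge 2$ and any martingale $(M_k)$ with differences $D_k$ and constant $M_0$,
\[
\|M_n\|_p^{2}\le\|M_0\|_p^{2}+(p-1)\sum_{k=1}^{n}\|D_k\|_p^{2}.
\]
Applied to $M_k=\E[U]+D_1+\cdots+D_k$ (so that $M_0=\E[U]$ and $M_n=U$) this gives
\[
\|U\|_p^{2}\le|\E[U]|^{2}+(p-1)\sum_{k=1}^{n}\|D_k\|_p^{2},
\]
and substituting the bound from the first step produces exactly (\ref{9hj4}).

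The main obstacle is the martingale moment inequality itself. A standard induction on $n$ reduces it to the two-term bound $\|Z+W\|_p^{2}\le\|Z\|_p^{2}+(p-1)\|W\|_p^{2}$ valid whenever $Z\in L^p(\mathcal{G})$ and $\E[W\mid\mathcal{G}]=0$. The latter follows from the pointwise Taylor-type estimate
\[
|z+w|^{p}\le|z|^{p}+p\,|z|^{p-2}zw+\tfrac{p(p-1)}{2}(|z|\vee|w|)^{p-2}w^{2},\qquad p\ge 2,
\]
after taking $\E[\cdot\mid\mathcal{G}]$ (the linear term vanishes) and a careful application of Hölder's inequality to the quadratic remainder; squeezing the sharp constant $(p-1)$ out of the Hölder step is the technical heart of the argument and is due to Pinelis and Rio. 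Since the applications in Section \ref{89jk2} use only $p=2$ and $p=4$, any constant of the correct order in $p$ would in fact suffice, so this delicate optimisation is not essential for the sequel.
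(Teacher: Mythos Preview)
Your approach is essentially identical to the paper's: the paper writes $U=\E[U]+U_1+\cdots+U_n$ with $U_k=\E[U-\E_k[U]\mid Y_1,\dots,Y_k]$, invokes Theorem~2.1 of Rio~\cite{Rio2009a} (which is exactly the sharp martingale moment inequality you state in your second step), and uses $\|U_k\|_p\le\|U-\E_k[U]\|_p$ from conditional Jensen. The only difference is that you additionally sketch a proof of Rio's inequality rather than merely citing it.
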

\begin{proof} We write $U=\E[U]+U_1+...+U_n,$ where $U_k=\E\left[U-\E_k[U]|Y_1,...,Y_k\right],\,\,1\leq k\leq n.$ Then, the inequality (\ref{9hj4}) follows directly from Theorem 2.1 in \cite{Rio2009a} and the fact that $\|U_k\|_p\leq \|U-\E_k[U]\|_p.$
\end{proof}
In order to bound the negative moments, we have the following.
\begin{prop}\label{ujmld}Let $(Y_k)_{k\geq 1}$ be a sequence of independent nonnegative random variables with $\E[Y_k]=1$ for all $k\geq 1.$ Assume that, for some $n_0\geq 1$ and $\alpha\geq 1,$ we have

\noindent (i) $\sup\limits_{k\geq n_0+1}\E|Y_k|^2<\infty,$

\noindent (ii) $\E[(Y_1 + \dots + Y_{n_0})^{-\alpha}]<\infty.$

\noindent Then, it holds that
$$\lim\limits_{n\to \infty}n^\alpha\E[(Y_1 + \dots + Y_{n})^{-\alpha}]=1.$$
\end{prop}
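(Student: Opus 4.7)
The plan is to apply the gamma integral representation $y^{-\alpha}=\Gamma(\alpha)^{-1}\int_0^\infty t^{\alpha-1}e^{-ty}dt$ to turn the quantity of interest into a Laplace-transform asymptotic. Writing $S_n=Y_1+\cdots+Y_n$ and $\phi_k(t):=\E[e^{-tY_k}]$, independence combined with the substitution $t=s/n$ gives
\begin{equation*}
n^\alpha\E[S_n^{-\alpha}]=\frac{1}{\Gamma(\alpha)}\int_0^\infty s^{\alpha-1}\prod_{k=1}^n\phi_k(s/n)\,ds.
\end{equation*}
Since $\Gamma(\alpha)^{-1}\int_0^\infty s^{\alpha-1}e^{-s}ds=1$, the statement reduces to showing pointwise convergence $\prod_{k=1}^n\phi_k(s/n)\to e^{-s}$ for each fixed $s>0$ together with a dominating function allowing dominated convergence. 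The pointwise limit follows from the elementary bound $1-u\le\phi_k(u)\le 1-u+\tfrac{M}{2}u^2$ for $k>n_0$ (using $1-x\le e^{-x}\le 1-x+\tfrac{x^2}{2}$ together with assumption (i) with $M:=\sup_{k>n_0}\E[Y_k^2]$) and the continuity $\phi_k(s/n)\to 1$ for each of the first $n_0$ terms.

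The main work is producing the dominating function. I first derive a uniform lower bound $P(Y_k\ge 1/2)\ge 1/(4M)$ for $k>n_0$, obtained from $1=\E[Y_k]\le \tfrac12+\sqrt{M\,P(Y_k\ge 1/2)}$ via Cauchy--Schwarz; combined with the quadratic bound above, this yields a uniform gap
\begin{equation*}
\phi_k(u)\le 1-c(1\wedge u),\qquad k>n_0,\ u>0,
\end{equation*}
for some $c=c(M)>0$. On the range $s\in(0,2n]$ this gives $\prod_{k>n_0}\phi_k(s/n)\le e^{-cs/2}$ once $n\ge 2n_0$, so the integrand is dominated by $s^{\alpha-1}e^{-cs/2}\in L^1(0,\infty)$, enabling dominated convergence on that range and identifying its limit as $\Gamma(\alpha)$.

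For the tail $s>2n$ the same gap forces $\prod_{k>n_0}\phi_k(s/n)\le(1-c)^{n-n_0}$, which decays exponentially in $n$. Assumption (ii) enters through the elementary inequality $e^{-x}\le\alpha^\alpha e^{-\alpha}x^{-\alpha}$ (for $x>0$), applied at $x=uS_{n_0}$, to obtain $\phi_{S_{n_0}}(u)\le\alpha^\alpha e^{-\alpha}\E[S_{n_0}^{-\alpha}]\,u^{-\alpha}$. Substituting $u=s/n$ and using $\int_0^\infty u^{\alpha-1}\phi_{S_{n_0}}(u)du=\Gamma(\alpha)\E[S_{n_0}^{-\alpha}]<\infty$, the tail integral is bounded by $(1-c)^{n-n_0}n^\alpha\Gamma(\alpha)\E[S_{n_0}^{-\alpha}]\to 0$. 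Combining the two pieces yields $\int_0^\infty s^{\alpha-1}\prod_{k}\phi_k(s/n)ds\to\Gamma(\alpha)$ and hence the conclusion.

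The delicate step is producing this single dominator valid on the entire half-line. Hypothesis (ii) is indispensable for the tail, where only the quantitative inverse-moment bound on $S_{n_0}$ suffices to beat the polynomial factor $n^\alpha$ against the exponentially small factor $(1-c)^{n-n_0}$; on the bulk range the quadratic behaviour of $\phi_k$ at the origin handles the domination on its own.
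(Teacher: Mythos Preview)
Your argument is correct and takes a genuinely different route from the paper's own proof. The paper proceeds probabilistically: it truncates the variables $Y_k$ for $k>n_0$ at level $(n-n_0)^\varepsilon$, controls the mean of the truncated sum via assumption~(i), and then applies Hoeffding's inequality to show that the truncated sum is, with overwhelming probability, at least $\mu_{n_0,n,\varepsilon}-\mu_{n_0,n,\varepsilon}^{1/2+2\varepsilon}$. On the complementary event it simply bounds $S_n^{-\alpha}$ by $S_{n_0}^{-\alpha}$, which is where assumption~(ii) enters. This yields $\limsup_n n^\alpha\E[S_n^{-\alpha}]\le 1$, and the matching lower bound is Jensen.

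Your approach is purely analytic: the gamma representation turns the problem into a Laplace-transform limit, and the uniform gap $\phi_k(u)\le 1-c(1\wedge u)$ (obtained from the second-moment bound via $P(Y_k\ge 1/2)\ge 1/(4M)$) replaces Hoeffding as the concentration input. One small remark on exposition: the pointwise bound $\phi_{S_{n_0}}(u)\le \alpha^\alpha e^{-\alpha}\E[S_{n_0}^{-\alpha}]u^{-\alpha}$ that you state is not what you actually use for the tail (and by itself would leave a divergent $\int s^{-1}ds$); the working step is the integral identity $\int_0^\infty u^{\alpha-1}\phi_{S_{n_0}}(u)\,du=\Gamma(\alpha)\E[S_{n_0}^{-\alpha}]$, which is correct and sufficient. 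Each approach has its merits: the paper's argument is more self-contained probabilistically and does not require manipulating Laplace transforms, while yours identifies the limit and the error in one stroke and makes transparent why the polynomial factor $n^\alpha$ is exactly cancelled.
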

\begin{proof}For $0\leq k\leq n-1$ and $\varepsilon\in(0,\frac{1}{6}],$ we put $S_{k,n}:=Y_{k+1} + \cdots + Y_n,$ $S_{k,n,\varepsilon}:=Y_{k+1}\ind_{\{Y_{k+1}\leq (n-k)^\varepsilon\}} + \cdots + Y_n\ind_{\{Y_n\leq (n-k)^\varepsilon\}}$ and $\mu_{k,n,\varepsilon}:=\E[S_{k,n,\varepsilon}].$

We have
\begin{align*}
\E&[Y_{n_0+1}\ind_{\{Y_{n_0+1}> (n-n_0)^\varepsilon\}} ]+\cdots+\E[Y_n\ind_{\{Y_n> (n-n_0)^\varepsilon\}}]\\
&\leq\frac{\E[Y_{n_0+1}^2]+\cdots+\E[Y_n^2]}{(n-n_0)^\varepsilon}\\
&\leq\sup\limits_{k\geq n_0+1}\E[Y_k^2] (n-n_0)^{1-\varepsilon}\,\,\forall\,\,n\geq n_0+1,
\end{align*}
and hence,
\begin{equation*}
n-n_0-\sup\limits_{k\geq n_0+1}\E[Y_k^2](n-n_0)^{1-\varepsilon}\leq \mu_{n_0,n,\varepsilon}\leq n-n_0\,\,\forall\,\,n\geq n_0+1.
\end{equation*}
For $n>\bar{n}:=n_0+(2\sup\limits_{k\geq n_0+1}\E[Y_k^2])^{\frac{1}{\varepsilon}}>n_0+2^{\frac{1}{\varepsilon}},$ we have
\begin{equation}\label{iom4}
\frac{n-n_0}{2}\leq n-n_0-\sup\limits_{k\geq n_0+1}\E[Y_k^2](n-n_0)^{1-\varepsilon}\leq\mu_{n_0,n,\varepsilon}
\end{equation}
and
\begin{align}
\mu_{n_0,n,\varepsilon}-\mu_{n_0,n,\varepsilon}^{\frac{1}{2}+2\varepsilon}&\geq n-n_0-\sup\limits_{k\geq n_0+1}\E[Y_k^2](n-n_0)^{1-\varepsilon}-(n-n_0)^{\frac{1}{2}+2\varepsilon}\notag\\
&\geq n-n_0-2\sup\limits_{k\geq n_0+1}\E[Y_k^2](n-n_0)^{1-\varepsilon}>0.\label{iom4v}
\end{align}%Particularly, $\mu_{n_0,n,\varepsilon}-\mu_{n_0,n,\varepsilon}^{\frac{1}{2}+2\varepsilon}>0$ for all $\varepsilon\in(0,\frac{1}{4}).$
 We now observe that
\begin{align*}
&\E[S_{0,n}^{-\alpha}]\leq \E[(S_{0,n_0}+S_{n_0,n,\varepsilon})^{-\alpha}]\\
&=\E\left[(S_{0,n_0}+S_{n_0,n,\varepsilon})^{-\alpha}\ind_{\{S_{n_0,n,\varepsilon}\leq \mu_{n_0,n,\varepsilon}-\mu_{n_0,n,\varepsilon}^{\frac{1}{2}+2\varepsilon}\}}\right]
+\E\left[(S_{0,n_0}+S_{n_0,n,\varepsilon})^{-\alpha}\ind_{\{S_{n_0,n,\varepsilon}> \mu_{n_0,n,\varepsilon}-\mu_{n_0,n,\varepsilon}^{\frac{1}{2}+2\varepsilon}\}}\right]\\
&\leq \E[S_{0,n_0}^{-\alpha}]P(S_{n_0,n,\varepsilon}\leq\mu_{n_0,n,\varepsilon}-\mu_{n_0,n,\varepsilon}^{\frac{1}{2}+2\varepsilon})+
(\mu_{n_0,n,\varepsilon}-\mu_{n_0,n,\varepsilon}^{\frac{1}{2}+2\varepsilon})^{-\alpha},\,\,n>\bar{n}.
\end{align*}
Note that, for every $n_0+1\leq k\leq n,$ $Y_k\ind_{\{Y_k\leq (n-n_0)^\varepsilon\}}$ is a nonnegative random variable bounded by $(n-n_0)^\varepsilon.$ We therefore can use Hoeffding's inequality to get
\begin{align*}
P(S_{n_0,n,\varepsilon}\leq\mu_{n_0,n,\varepsilon}-\mu_{n_0,n,\varepsilon}^{\frac{1}{2}+2\varepsilon})
&=P(S_{n_0,n,\varepsilon}-\mu_{n_0,n,\varepsilon}\leq-\mu_{n_0,n,\varepsilon}^{\frac{1}{2}+2\varepsilon})\\
&\leq \exp\left(-\frac{2\mu_{n_0,n,\varepsilon}^{1+4\varepsilon}}{(n-n_0)^{1+2\varepsilon}}\right),\,\,n>\bar{n}.
\end{align*}
As a consequence,
$$\E[S_{0,n}^{-\alpha}]\leq\E[S_{0,n_0}^{-\alpha}]\exp\left(-\frac{2\mu_{n_0,n,\varepsilon}^{1+4\varepsilon}}{(n-n_0)^{1+2\varepsilon}}\right)+
(\mu_{n_0,n,\varepsilon}-\mu_{n_0,n,\varepsilon}^{\frac{1}{2}+2\varepsilon})^{-\alpha},\,\,\,n>\bar{n}.$$
This, combined with (\ref{iom4}) and (\ref{iom4v}), gives us
\begin{multline}\label{8usk}
n^\alpha\E[S_{0,n}^{-\alpha}]\leq\E[S_{0,n_0}^{-\alpha}]n^\alpha\exp\left(-\frac{2(n-n_0)^{2\varepsilon}}{2^{1+4\varepsilon}}\right)\\+
\bigg(\frac{n-n_0-2\sup\limits_{k\geq n_0+1}\E[Y_k^2](n-n_0)^{1-\varepsilon}}{n}\bigg)^{-\alpha},\,\,n>\bar{n}.
\end{multline} %Furthermore, it follows from (\ref{iom4}) that $\lim\limits_{n\to\infty}\frac{\mu_{n_0,n,\varepsilon}}{n}=1$ and $\lim\limits_{n\to\infty}\frac{\mu_{n_0,n,\varepsilon}^{\frac{1}{2}+2\varepsilon}}{n}=0.$
So we deduce
$$\limsup\limits_{n\to \infty}n^\alpha\E[S_{0,n}^{-\alpha}]\leq 1.$$
This finishes the proof because $n^\alpha\E[S_{0,n}^{-\alpha}]=n^\alpha\E[(Y_1 + \dots + Y_{n})^{-\alpha}]\geq 1$ by Jensen's inequality.

\end{proof}
\begin{rem}\label{ujiu} We recall that if $R$ is a non-negative random variable then its negative moments can be represented via the moment generating function:
\begin{equation}\label{iihg}
\E[R^{-\alpha}]=\frac{1}{\Gamma(\alpha)}\int_0^\infty x^{\alpha-1}\E[e^{-xR}]dx,\,\,\alpha>0,
\end{equation}
where $\Gamma(\alpha)=\int_0^\infty x^{\alpha-1}e^{-x}dx$ is the Gamma function. Thus we can check the condition $(ii)$ of Proposition \ref{ujmld} by using the following representation
$$\E[(Y_1 + \dots + Y_{n_0})^{-\alpha}]=\frac{1}{\Gamma(\alpha)}\int_0^\infty x^{\alpha-1}\prod\limits_{k=1}^{n_0}\E[e^{-xY_k}]dx.$$
For instance, if $\lim\limits_{x\to\infty}x^{\alpha_k}\E[e^{-xY_k}]$ is finite for some $\alpha_k>0$ then $\E|Y_1 + \dots + Y_{n_0}|^{-\alpha}<\infty$ for all $\alpha<\alpha_1+\cdots+\alpha_{n_0}.$
\end{rem}
\begin{rem}\label{u41u} We now choose to use $\varepsilon=\frac{1}{6}.$ Then, the relation (\ref{8usk}) gives us the following upper bound%it follows from the estimates (\ref{iom4}) and  (\ref{iom4v}) that
%$$\mu_{n_0,n,\varepsilon}-\mu_{n_0,n,\varepsilon}^{\frac{1}{2}+2\varepsilon}\geq \frac{n-n_0}{2}-(n-n_0)^{\frac{1}{2}+2\varepsilon}=\frac{n-n_0}{2}-(n-n_0)^{\frac{5}{6}}\geq 0$$
%for all $n>\bar{n}:=n_0+(2\sup\limits_{k\geq n_0+1}\E[Y_k^2])^{6}>n_0+64.$
\begin{align*}
n^\alpha&\E[(Y_1 + \dots + Y_{n})^{-\alpha}]=n^\alpha\E[S_{0,n}^{-\alpha}]\\
&\leq\E[(Y_1 + \dots + Y_{n_0})^{-\alpha}]n^\alpha\exp\left(-\frac{(n-n_0)^{\frac{1}{3}}}{2^{\frac{2}{3}}}\right)+
\bigg(\frac{n-n_0-2\sup\limits_{k\geq n_0+1}\E[Y_k^2](n-n_0)^{\frac{5}{6}}}{n}\bigg)^{-\alpha}
\end{align*}
for all $n>\bar{n}:=n_0+(2\sup\limits_{k\geq n_0+1}\E[Y_k^2])^{6}.$ On the other hand, we have
$$n^\alpha\E[(Y_1 + \dots + Y_{n})^{-\alpha}]\leq \bar{n}^\alpha \E[(Y_1 + \dots + Y_{n_0})^{-\alpha}],\,\,\,n_0\leq n\leq \bar{n}.$$
So it holds that
$$\sup\limits_{k\geq n_0}n^\alpha\E[(Y_1 + \dots + Y_{n})^{-\alpha}]\leq C_{\alpha,\bar{\beta}} (1+\E[(Y_1 + \dots + Y_{n_0})^{-\alpha}]),$$
where $C_{\alpha,\bar{\beta}}$ is a positive constant depending only on $\alpha$ and $\bar{\beta}:=\sup\limits_{k\geq n_0+1}\E|Y_k|^2.$
\end{rem}
\subsection{Quadratic forms}\label{89a}
Let  $A_n=(a^{(n)}_{uv})_{u,v=1}^n$  be a symmetric matrix with vanishing diagonal, where each $a^{(n)}_{uv}$ is a real number depending on $n.$ For the simplicity of notations, we will write $a_{uv}$ instead of $a^{(n)}_{uv}.$ The central limit theorem for the quadratic form
$$F_n=\sum\limits_{1\leq u\leq v\leq n} a_{uv}X_uX_v$$
has been extensively discussed in the literature. The best known result proved by de Jong \cite{deJong1987} tells us that the $\sigma_n^{-1}F_n$ converges to $Z\sim N(0,1)$  in distribution if
\begin{equation}\label{okf}
\text{$\sigma_n^{-4}\mathrm{Tr}(A_n^4)\to 0$ and $\sigma_n^{-2}\max\limits_{1\leq u\leq n}\sum\limits_{v=1}^na_{uv}^2\to 0$},\,\,n\to\infty,
\end{equation}
where $\sigma^2_n:={\rm Var}(F_n)=\sum\limits_{1\leq u\leq v\leq n} a_{uv}^2$ and $\mathrm{Tr}(A_n^4)=\sum\limits_{u,v=1}^n \big(\sum\limits_{k=1}^na_{ku} a_{kv}\big)^2.$ Recently, the following Berry-Esseen error bound has been  obtained in \cite{Nicolas2020,Shao2019}
\begin{multline}
\sup\limits_{x\in \mathbb{R}}\left|P(\sigma_n^{-1}F_n\leq x)-P(Z\leq x)\right|\\\leq \frac{C\sup\limits_{1\leq k\leq n}\E|X_k|^4}{\sigma_n^2}\left(\sqrt{\sum\limits_{k=1}^n\big(\sum\limits_{v=1}^na_{kv}^2\big)^2}+\sqrt{\sum\limits_{u,v=1}^n \big(\sum\limits_{k=1}^na_{ku} a_{kv}\big)^2}\right),\label{ohjm}
\end{multline}
where $C$ is an absolute constant.

Here we use Theorem \ref{gko} to bound $I(\sigma_n^{-1}F_n\|Z).$ The key point is to use the following differentiable decomposition of $F_n:$
$$\mathcal{M}_kF_n=\frac{1}{2}X_k\sum\limits_{v=1}^n a_{kv}X_v,\,\,1\leq k\leq n.$$
Notice that $a_{uu}=0$ for all $1\leq u\leq n.$ Then, we have $\mathcal{L}_k\mathcal{M}_kF_n=\frac{1}{2}\tau_k(X_k)\sum\limits_{v=1}^n a_{kv}X_v,\,\,1\leq k\leq n,$ and hence,
$$\Theta_{F_n}=\frac{1}{2}\sum\limits_{u=1}^n\big(\sum\limits_{v=1}^n a_{uv}X_v\big)^2\tau_u(X_u).$$
The next theorem describes our general result for the Fisher information distance of quadratic forms.
\begin{thm}\label{uij8}Assume that
$$\beta:=\max\big\{\sup\limits_{k\geq 1}\E|X_k|^8,\sup\limits_{k\geq 1}\E|\tau_k(X_k)|^8,\sup\limits_{k\geq 1}\E|\tau'_k(X_k)|^8\big\}<\infty.$$
%$$\beta:=\max\big\{\sup\limits_{k\geq 1}\E|X_k|^8,\sup\limits_{k\geq 1}\E|\tau_k(X_k)|^8,\sup\limits_{k\geq 1}\E|\tau'_k(X_k)|^6\big\}<\infty.$$
%(i) $\E|X_k|^8+\E[\tau_k(X_k)^4]+\E[|\tau'_k(X_k)|^4\tau_k(X_k)^2]<\infty$
%
%(ii) $\E|\Theta_{F_n}|^{-8}<\infty.$
%
\noindent Then, we have
\begin{equation}\label{kld1}
I(\sigma_n^{-1}F_n\|Z)\leq \frac{C_\beta}{\sigma_n^4}\left(\sum\limits_{k=1}^n\big(\sum\limits_{v=1}^na_{kv}^2\big)^2+\sum\limits_{u,v=1}^n \big(\sum\limits_{k=1}^na_{ku} a_{kv}\big)^2\right)\|\sigma_n^2\Theta_{F_n}^{-1}\|_8^3\,\,\forall\,n\geq 1,
\end{equation}
where $C_\beta$ is a positive constant depending only on $\beta.$
\end{thm}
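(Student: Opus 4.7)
The plan is to apply Theorem \ref{gko} to $\tilde F:=\sigma_n^{-1}F_n$ with $p=q=4$ and $r=s=t=2$, using the symmetric decomposition $\mathcal M_k F_n=\tfrac12 X_k\sum_v a_{kv}X_v$ already suggested in the text. Because $a_{kk}=0$, the factor $\sum_v a_{kv}X_v$ is independent of $X_k$, so $\E_k[\mathcal M_k F_n]=0$; summing in $k$ and using the symmetry of $A_n$ reproduces $F_n-\E[F_n]=F_n$; differentiability is clear. Positivity of $\Theta_{F_n}$ a.s.\ follows from $\tau_k\geq 0$ and the fact that each $\sum_v a_{kv}X_v$ has a density.

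\textbf{Key ingredients.} Since $\int_x^\infty y\,p_k(y)\,dy/p_k(x)=\tau_k(x)$, one has $\mathcal L_k\mathcal M_k F_n=\tfrac12\tau_k(X_k)\sum_v a_{kv}X_v$ and $\partial_k F_n=\sum_v a_{kv}X_v$, giving
\[
\Theta_{F_n}=\tfrac12\sum_k\Big(\sum_v a_{kv}X_v\Big)^2\tau_k(X_k),\qquad\E[\Theta_{F_n}]=\sigma_n^2.
\]
The decisive algebraic simplification is
\[
\sum_k\frac{|\mathcal L_k\mathcal M_k F_n|^2}{\tau_k(X_k)}=\tfrac12\Theta_{F_n},
\]
so the first factor in the second term of (\ref{lfm3}), applied to $\tilde F$ (for which $\Theta_{\tilde F}=\sigma_n^{-2}\Theta_{F_n}$), reduces to $\tfrac12(\sigma_n^2\Theta_{F_n}^{-1})^3$; its $L^2$-norm equals $\tfrac12\|\sigma_n^2\Theta_{F_n}^{-1}\|_6^3\leq\tfrac12\|\sigma_n^2\Theta_{F_n}^{-1}\|_8^3$, producing precisely the factor in (\ref{kld1}).

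\textbf{Remaining moment estimates.} Three norms must then be controlled. (a) $\|\tilde F\|_8^2$: two applications of the Marcinkiewicz--Zygmund inequality (\ref{9hj4}) -- first to $F_n$, then to each inner sum $\sum_v a_{kv}X_v$ -- together with $\sum_{k,v}a_{kv}^2=2\sigma_n^2$ give $\|\tilde F\|_8^2\leq C_\beta$. (b) $\|1-\sigma_n^{-2}\Theta_{F_n}\|_4^2$: since $\E[\Theta_{F_n}]=\sigma_n^2$, applying (\ref{9hj4}) to $\Theta_{F_n}$ and splitting $\Theta_{F_n}-\E_k[\Theta_{F_n}]$ into the contribution from the $\tau_k(X_k)$ factor (which produces $\sum_k(\sum_v a_{kv}^2)^2$) and from the $u\neq k$ terms where $X_k$ couples into the outer quadratic through $a_{uk}$ (which produces $\mathrm{Tr}(A_n^4)$) yields the bound $C_\beta\sigma_n^{-4}\big(\sum_k(\sum_v a_{kv}^2)^2+\mathrm{Tr}(A_n^4)\big)$. (c) $\|\sum_k|\partial_k\Theta_{F_n}|^2\tau_k(X_k)\|_2$: differentiation yields
\[
\partial_k\Theta_{F_n}=\sum_u a_{uk}\Big(\sum_v a_{uv}X_v\Big)\tau_u(X_u)+\tfrac12\Big(\sum_v a_{kv}X_v\Big)^2\tau'_k(X_k),
\]
where the second summand contributes $\sum_k(\sum_v a_{kv}^2)^2$ (by triangle inequality, independence of $X_k$ from $\sum_v a_{kv}X_v$, and MZ), and the first contributes $\mathrm{Tr}(A_n^4)=\sum_{u,v}(\sum_k a_{ku}a_{kv})^2$ (by expansion and using $\E X_v=0$, $a_{kk}=0$ to identify the surviving pairings). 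Combining (a)--(c) with the identity of the previous paragraph yields (\ref{kld1}).

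\textbf{Main obstacle.} Step (c) is the most delicate: the summands $\partial_k\Theta_{F_n}$ are \emph{not} independent across $k$ (they share factors $\sum_v a_{uv}X_v$ and $\tau_u(X_u)$ for $u\neq k$), so (\ref{9hj4}) cannot be applied directly to them. One must expand the resulting quartic polynomial in the $X_v$'s, enumerate the index pairings that survive after integration (using $a_{kk}=0$ to eliminate degenerate contributions), and absorb all the $L^8$ moment factors coming from $X_k$, $\tau_k(X_k)$, $\tau'_k(X_k)$ into a single constant $C_\beta$ via H\"older's inequality. The bookkeeping needed to avoid spurious $\sigma_n^2$ factors and match exactly the two combinatorial sums $\sum_k(\sum_v a_{kv}^2)^2$ and $\mathrm{Tr}(A_n^4)$ is where the main technical effort lies.
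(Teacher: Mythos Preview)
Your proposal is correct and follows the same route as the paper: apply Theorem \ref{gko} with $p=q=4$, $r=s=t=2$ to $\tilde F=\sigma_n^{-1}F_n$ via the symmetric decomposition $\mathcal M_kF_n=\tfrac12 X_k\sum_v a_{kv}X_v$; the identity $\sum_k|\mathcal L_k\mathcal M_kF_n|^2/\tau_k(X_k)=\tfrac12\Theta_{F_n}$ is exactly what the paper uses in Lemma \ref{ting3}, and your steps (a), (b), (c) correspond to Lemmas \ref{ting3}, \ref{ting1}, \ref{ting2}.

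The only divergence worth noting is in step (c). You anticipate the main difficulty as a brute-force expansion of a quartic expression in the $X_v$'s with enumeration of surviving index pairings; but the expression contains $\tau_u(X_u)$ factors, so it is not literally a polynomial, and in any case the paper avoids this combinatorics altogether. Writing $A_k:=\sum_v\big(\sum_u a_{uk}a_{uv}\tau_u(X_u)\big)X_v$ and $A:=\sum_k A_k^2\tau_k(X_k)$, the paper simply uses the triangle inequality
\[
\|A\|_2\le\sum_k\|A_k^2\tau_k(X_k)\|_2\le\max_k\|\tau_k(X_k)\|_4\sum_k\|A_k\|_8^2,
\]
and then bounds each $\|A_k\|_8^2$ by applying the Marcinkiewicz--Zygmund inequality (\ref{9hj4}) to the increments $A_k-\E_lA_k$ (which split naturally into a $\tau_l(X_l)-1$ piece and an $X_l$ piece, each handled by one more application of (\ref{9hj4})). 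This recursive use of (\ref{9hj4}) at the level of individual $A_k$'s---rather than a global moment expansion of $\sum_k|\partial_k\Theta_{F_n}|^2\tau_k(X_k)$---is what produces the two terms $\sum_k(\sum_v a_{kv}^2)^2$ and $\mathrm{Tr}(A_n^4)$ cleanly, without any pairing bookkeeping.
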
%
Obviously, we have
\begin{align*}
\frac{1}{\sigma_n^4}\left(\sum\limits_{k=1}^n\big(\sum\limits_{v=1}^na_{kv}^2\big)^2+\sum\limits_{u,v=1}^n \big(\sum\limits_{k=1}^na_{ku} a_{kv}\big)^2\right)&=\frac{1}{\sigma_n^4}\left(\sum\limits_{k=1}^n\big(\sum\limits_{v=1}^na_{kv}^2\big)^2+\mathrm{Tr}(A_n^4)\right)\\
&\leq \sigma_n^{-4}\mathrm{Tr}(A_n^4)+\sigma_n^{-2}\max\limits_{1\leq u\leq n}\sum\limits_{v=1}^na_{uv}^2.
\end{align*}
So we obtain the following central limit theorem result in information-theoretic sense.
\begin{cor}\label{df2}Assume that

\noindent (i) $\beta:=\max\big\{\sup\limits_{k\geq 1}\E|X_k|^8,\sup\limits_{k\geq 1}\E|\tau_k(X_k)|^8,\sup\limits_{k\geq 1}\E|\tau'_k(X_k)|^8\big\}<\infty,$

\noindent (ii) $\limsup\limits_{n\to\infty}\|\sigma_n^2\Theta_{F_n}^{-1}\|_8^3<\infty.$

\noindent  (iii) the condition (\ref{okf}) holds.

\noindent Then $\lim\limits_{n\to\infty}I(\sigma_n^{-1}F_n\|Z)=0.$
\end{cor}
The proof of Theorem \ref{uij8} will be given at the end of this subsection. We note that the error bound (\ref{kld1}) comes as no surprise: it can be guessed by using the Berry-Esseen bound (\ref{ohjm}) and the following known relation
$$\sup\limits_{x\in \mathbb{R}}\left|P(\sigma_n^{-1}F_n\leq x)-P(Z\leq x)\right|\leq \sqrt{I(\sigma_n^{-1}F_n\|Z)}.$$
The additional factor $\|\sigma_n^2\Theta_{F_n}^{-1}\|_8^3$ appearing in our bound characterizes the range of applicability of Theorem \ref{uij8}.  In other words, our Theorem \ref{uij8} only works for the class of random variables $X_k's$ satisfying $\|\sigma_n^2\Theta_{F_n}^{-1}\|_8^3<\infty.$ In the next Corollaries \ref{fkm9} and \ref{fdv}, we provide such two classes of random variables.%In order to make the bound (\ref{kld1}) more explicit, we need to find an upper bound on $\|\sigma_n^2\Theta_{F_n}^{-1}\|_8^3.$
\begin{cor}\label{fkm9}Assume that

\noindent (i) $\beta:=\max\big\{\sup\limits_{k\geq 1}\E|X_k|^8,\sup\limits_{k\geq 1}\E|\tau_k(X_k)|^8,\sup\limits_{k\geq 1}\E|\tau'_k(X_k)|^8\big\}<\infty,$

\noindent (ii) there exists  $\tau_0>0$ such that $\tau_k(X_k)\geq \tau_0\,\,a.s.$ for all $k\geq 1,$

\noindent  (iii) $\E|X_1^2+\cdots+X_{n_0}^2|^{-8}<\infty$ for some $n_0\geq 1.$

\noindent If $n\geq n_0$ and the matrix $A_n^TA_n$ is positive definite then  we have
\begin{equation}\label{hk1ld1}
I(\sigma_n^{-1}F_n\|Z)\leq \frac{C_{\beta,\tau_0}\gamma^{\frac{3}{8}}}{\sigma_n^4}\left(\sum\limits_{k=1}^n\big(\sum\limits_{v=1}^na_{kv}^2\big)^2+\sum\limits_{u,v=1}^n \big(\sum\limits_{k=1}^na_{ku} a_{kv}\big)^2\right)(\sigma_n^{-2}\lambda_{min}^{(n)} n)^{-3},
\end{equation}
where $\lambda_{min}^{(n)}$ is the smallest eigenvalue of $A_n^TA_n,$ $\gamma:=\sup\limits_{n\geq n_0}n^8\E|X_1^2+\cdots+X_n^2|^{-8}<\infty$ and $C_{\beta,\tau_0}$ is a positive constant depending only on $\beta,\tau_0.$
\end{cor}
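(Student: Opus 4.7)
The plan is to reduce Corollary \ref{fkm9} to Theorem \ref{uij8} by estimating the extra factor $\|\sigma_n^2\Theta_{F_n}^{-1}\|_8^3$ under assumptions (ii)--(iii). Recall from the derivation preceding Theorem \ref{uij8} that
$$\Theta_{F_n}=\frac{1}{2}\sum\limits_{u=1}^n\bigl(\sum\limits_{v=1}^n a_{uv}X_v\bigr)^2\tau_u(X_u).$$
Writing $\mathbb{X}_n=(X_1,\ldots,X_n)^T$, the sum of squares is $\mathbb{X}_n^T A_n^T A_n\mathbb{X}_n$, so the whole argument hinges on controlling this quadratic form from below together with the Stein kernels $\tau_u(X_u)$.

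First, I would use hypothesis (ii) to pull the Stein kernels out of the sum, obtaining
$$\Theta_{F_n}\ge\frac{\tau_0}{2}\,\mathbb{X}_n^T A_n^T A_n\mathbb{X}_n.$$
Since $A_n^T A_n$ is positive definite with smallest eigenvalue $\lambda_{min}^{(n)}$, the Rayleigh quotient bound yields
$$\mathbb{X}_n^T A_n^T A_n\mathbb{X}_n\ge\lambda_{min}^{(n)}\,(X_1^2+\cdots+X_n^2),$$
and hence
$$\|\sigma_n^2\Theta_{F_n}^{-1}\|_8\le\frac{2\sigma_n^2}{\tau_0\,\lambda_{min}^{(n)}}\,\bigl\|(X_1^2+\cdots+X_n^2)^{-1}\bigr\|_8.$$

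The main step is then to show that the negative moments $\E|X_1^2+\cdots+X_n^2|^{-8}$ decay like $n^{-8}$. This is exactly the setting of Proposition \ref{ujmld} applied to the i.i.d.\ normalization-free variables $Y_k:=X_k^2$ with $\alpha=8$: we have $\E[Y_k]={\rm Var}(X_k)=1$, condition (i) of that proposition reads $\sup_{k\ge n_0+1}\E[X_k^4]<\infty$ which follows from hypothesis (i) of the corollary, and condition (ii) of that proposition is precisely hypothesis (iii). Therefore $\lim_{n\to\infty}n^8\E|X_1^2+\cdots+X_n^2|^{-8}=1$, and combining with Jensen's inequality $n^8\E|X_1^2+\cdots+X_n^2|^{-8}\ge 1$, the quantity $\gamma:=\sup_{n\ge n_0}n^8\E|X_1^2+\cdots+X_n^2|^{-8}$ is finite. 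This gives
$$\bigl\|(X_1^2+\cdots+X_n^2)^{-1}\bigr\|_8\le\frac{\gamma^{1/8}}{n}\qquad\text{for all }n\ge n_0.$$

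Putting the pieces together,
$$\|\sigma_n^2\Theta_{F_n}^{-1}\|_8^3\le\Bigl(\frac{2\gamma^{1/8}}{\tau_0}\Bigr)^{\!3}(\sigma_n^{-2}\lambda_{min}^{(n)}n)^{-3},$$
and plugging into the bound (\ref{kld1}) of Theorem \ref{uij8} yields (\ref{hk1ld1}) with $C_{\beta,\tau_0,\gamma}=C_\beta(2\gamma^{1/8}/\tau_0)^3$. The main obstacle is the negative moment control, but it is entirely handled by Proposition \ref{ujmld}; the only care needed is to verify its finiteness hypothesis via assumption (iii) and to extract the uniform constant $\gamma$ from the fact that $n^8\E|\cdot|^{-8}$ converges to $1$.
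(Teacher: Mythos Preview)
Your proof is correct and follows essentially the same route as the paper: bound $\Theta_{F_n}$ from below via assumption (ii) and the Rayleigh quotient for $A_n^TA_n$, then invoke Proposition~\ref{ujmld} with $Y_k=X_k^2$ to control the negative moment and feed the resulting estimate for $\|\sigma_n^2\Theta_{F_n}^{-1}\|_8^3$ into Theorem~\ref{uij8}. One small wording slip: the $Y_k=X_k^2$ are independent but not i.i.d.\ (the $X_k$'s need not be identically distributed); this is harmless since Proposition~\ref{ujmld} only requires independence and unit mean.
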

\begin{proof}We put
$$\tilde{\Theta}_{F_n}:=\sum\limits_{u=1}^n\big(\sum\limits_{v=1}^n a_{uv}X_v\big)^2.$$
It can be seen that $\tilde{\Theta}_{F_n}$ is a quadratic form corresponding to the matrix $A_n^TA_n.$ Hence, we always have
 $$\tilde{\Theta}_{F_n}\geq \lambda_{min}^{(n)}(X_1^2+\cdots+ X_n^2)\,\,a.s.$$
By the assumption $\tau_k(X_k)\geq \tau_0>0$ for all $k\geq 1,$ we obtain
$$\Theta_{F_n}\geq \frac{\tau_0}{2}\tilde{\Theta}_{F_n}\geq\frac{\tau_0 \lambda_{min}^{(n)}}{2}(X_1^2+\cdots+ X_n^2)\,\,a.s.$$
Recalling Proposition \ref{ujmld}, we now use the conditions $(i)$ and $(iii)$ to get the fact that
$$\text{$\gamma:=\sup\limits_{n\geq n_0}n^8\E|X_1^2+\cdots+X_n^2|^{-8}$ is a finite constant},$$
and hence,
\begin{equation}\label{wmk1}
\|\sigma_n^2\Theta_{F_n}^{-1}\|_8\leq \frac{2\gamma\textcolor{blue}{^{\frac{1}{8}}}\sigma_n^2}{\tau_0\lambda_{min}^{(n)} n}<\infty,\,\,n\geq n_0.
\end{equation}
This estimate, together with the bound (\ref{kld1}), yields (\ref{hk1ld1}).
\end{proof}
\begin{rem}In the bound (\ref{hk1ld1}), the exact value of $\gamma$ is not easy to compute. However, in view of Remark \ref{u41u}, we can estimate $\gamma$ as follows
$$\gamma=\sup\limits_{k\geq n_0}n^8\E[(X_1^2 + \dots + X_{n}^2)^{-8}]\leq C_{\bar{\beta}} (1+\E[(X_1^2 + \dots + X_{n_0}^2)^{-8}]),$$
where $C_{\bar{\beta}}$ is a positive constant depending only on $\bar{\beta}:=\sup\limits_{k\geq n_0+1}\E|X_k|^4.$
\end{rem}
\begin{rem}From the bound (\ref{hk1ld1}), the rate of Fisher information convergence is consistent with the Berry-Esseen rate (\ref{ohjm}) if the matrix $A_n$ satisfies $\limsup\limits_{n\to\infty}(\sigma_n^{-2}\lambda_{min}^{(n)} n)^{-3}<\infty.$
\end{rem}
\begin{rem}We have $\E[\tilde{\Theta}_{F_n}]\geq \lambda_{min}^{(n)}\E[(X_1^2+\cdots+ X_n^2)]=\lambda_{min}^{(n)}n.$ Similarly, $\E[\tilde{\Theta}_{F_n}]\leq \lambda_{max}^{(n)}n,$ where $\lambda_{max}^{(n)}$ is the largest eigenvalue of $A_n^TA_n.$ Furthermore, $\E[\tilde{\Theta}_{F_n}]=2\E[\Theta_{F_n}]=2\sigma_n^2$ because $\E[\tau_k(X_k)]=1,k\geq 1.$ Thus the factor $(\sigma_n^{-2}\lambda_{min}^{(n)} n)^{-3}$ in the bound (\ref{hk1ld1}) can be bounded by
$$(\sigma_n^{-2}\lambda_{min}^{(n)} n)^{-3}\leq \frac{1}{8}(\lambda_{max}^{(n)}/\lambda_{min}^{(n)})^3.$$
 %Particularly, if $A_n$ is an orthonormal matrix then $\lambda_{max}^{(n)}=\lambda_{min}^{(n)}=1$ and the bound (\ref{hk1ld1}) becomes
%$$
%I(\sigma_n^{-1}F_n\|Z)\leq \frac{C_{\beta,\tau_0,\gamma}}{\sigma_n^4}\left(\sum\limits_{k=1}^n\big(\sum\limits_{v=1}^na_{kv}^2\big)^2+\sum\limits_{u,v=1}^n \big(\sum\limits_{k=1}^na_{ku} a_{kv}\big)^2\right).
%$$
For the reader's convenience we recall that
$$\lambda_{min}^{(n)}=\min\limits_{x\neq 0}\frac{\|A_nx\|^2}{\|x\|^2},\,\,\,\lambda_{max}^{(n)}=\max\limits_{x\neq 0}\frac{\|A_nx\|^2}{\|x\|^2},$$
where $\|.\|$ denotes Euclidean norm in $\mathbb{R}^n.$
\end{rem}
\begin{rem} The condition $(iii)$ of Corollary \ref{fkm9} can be checked by using the observation made in Remark \ref{ujiu}. Furthermore, we can use the characteristic function of $X_k$ to compute $\E[e^{-x X_k^2}],x\geq 0.$ Indeed, we have
$$\E[e^{-x X_k^2}]=\E[e^{i\sqrt{2x} X_k N}]=\E[\varphi_k(\sqrt{2x}N)],$$
where $i^2=-1,$ $\varphi_k$ denotes the characteristic function of $X_k$ and $N$ is standard normal random variable independent of $X_k.$
\end{rem}
\begin{rem}When $(X_k)_{k\geq 1}$ is the sequence of independent standard normal random variables, we can use Theorem 12 in \cite{Herry2023} to obtain a similar bound to (\ref{hk1ld1}). However, the method developed in \cite{Herry2023} requires the sample size $n$ to be sufficiently large. Here our method only require $n\geq n_0=17.$ Indeed, we have $\tau_k(X_k)=1$ and $\E[e^{-xX_k^2}]= (1+2 x)^{-\frac{1}{2}}$ for all $x\geq 0$ and $k\geq 1.$ Hence, all conditions of Corollary \ref{fkm9} are fulfilled ($\tau_0=1$ and $n_0=17$). Furthermore, standing for the estimate (\ref{wmk1}), we have the following exact expression
\begin{equation}\label{iihg2}
\|\sigma_n^2\Theta_{F_n}^{-1}\|_8=2\sigma_n^2\bigg(\frac{1}{\Gamma(8)}\int_0^\infty x^{7}\prod\limits_{k=1}^n  (1+2 \lambda_k x)^{-1/2}dx\bigg)^{1/8},
\end{equation}
where $\lambda_1,\cdots,\lambda_n$ are the eigenvalues of  $A_n^TA_n.$ Indeed, denoted by $\mathbb{V}_1,\cdots,\mathbb{V}_n$ the normalized eigenvectors of  $A_n^TA_n,$ we have
$$2\Theta_{F_n}=\tilde{\Theta}_{F_n}=\lambda_1Y_1^2+\cdots+\lambda_n Y_n^2,$$
where $Y_k:=\mathbb{V}_k{\mathbb{X}_n}^T,1\leq k\leq n.$ Note that $Y_k's$ are also independent standard normal random variables. So we obtain (\ref{iihg2}) by using the negative moment formula  (\ref{iihg}). %The reader can verify that $\lim\limits_{n\to\infty}\|\sigma_n^2\Theta_{F_n}^{-1}\|_8=1$ if $\lim\limits_{n\to\infty}\lambda_{max}^{(n)}/\sigma_n^2=0.$
\end{rem}
\begin{cor}\label{fdv}Assume that

\noindent (i) $\beta:=\max\big\{\sup\limits_{k\geq 1}\E|X_k|^8,\sup\limits_{k\geq 1}\E|\tau_k(X_k)|^8,\sup\limits_{k\geq 1}\E|\tau'_k(X_k)|^8\big\}<\infty,$

\noindent (ii) there exist  $0<\underline{\tau}_0\leq \overline{\tau}_0<\infty$ such that $\underline{\tau}_0\leq \tau_k(X_k)\leq \overline{\tau}_0\,\,a.s.$ for all $k\geq 1,$

\noindent If the largest eigenvalue $\lambda_{max}^{(n)}$ of $A_n^TA_n$ satisfies $\frac{\underline{\tau}_0\sigma_n^2}{\overline{\tau}_0\lambda_{max}^{(n)}}> 8$ then we have
%there exists a positive sequence $(c_n)_{n\geq n_0}$ such that, for all $n\geq n_0,$ the matrix $c_nA_n^TA_n-(A_n^TA_n)^2$ is non-negative definite and $\frac{\underline{\tau}_0\sigma_n^2}{c_n\overline{\tau}_0}\geq 8+\varepsilon$ for some $\varepsilon>0.$
\begin{equation}\label{fr5}
I(\sigma_n^{-1}F_n\|Z)\leq \frac{C_{\beta,\underline{\tau}_0}}{\sigma_n^4}\left(\sum\limits_{k=1}^n\big(\sum\limits_{v=1}^na_{kv}^2\big)^2+\sum\limits_{u,v=1}^n \big(\sum\limits_{k=1}^na_{ku} a_{kv}\big)^2\right),
\end{equation}
 where $C_{\beta,\underline{\tau}_0}$ is a positive constant depending only on $\beta$ and $\underline{\tau}_0.$
%$n_0\geq 1$ such that, for each $n\geq n_0,$ the matrix $c_nB_n^TB_n-B_n$  non-negative definite$B_n=:A_n^TA_n$
\end{cor}
\begin{proof}Let $\tilde{\Theta}_{F_n}$ be as in the proof of Corollary \ref{fkm9}. We write
$$\tilde{\Theta}_{F_n}:=\sum\limits_{u=1}^n\big(\sum\limits_{v=1}^n a_{uv}X_v\big)^2=\sum\limits_{u,v=1}^n b_{uv}X_uX_v,$$
where $(b_{uv})_{u,v=1}^n:=A_n^TA_n.$ Fixed $n\geq1,$ we consider the function
$$\varphi(x):=\E[e^{-x\tilde{\Theta}_{F_n}}],\,\,x\geq 0.$$
We have
\begin{align*}
\varphi'(x)&=-\E[\tilde{\Theta}_{F_n}e^{-x\tilde{\Theta}_{F_n}}]\\
&=-\sum\limits_{u=1}^n\E\left[X_u\sum\limits_{v=1}^n b_{uv}X_ve^{-x\tilde{\Theta}_{F_n}}\right]\\
&=-\sum\limits_{u=1}^n\E\left[\big(b_{uu}(X_u^2-\tau_u(X_u))+X_u\sum\limits_{v=1,v\neq u}^n b_{uv}X_v\big)e^{-x\tilde{\Theta}_{F_n}}\right]-\sum\limits_{u=1}^n\E[b_{uu}\tau_u(X_u)e^{-x\tilde{\Theta}_{F_n}}].
\end{align*}
For each $1\leq u\leq n,$ by using the formula (\ref{kflo}), we obtain
$$\E_u\left[\big(b_{uu}(X_u^2-\tau_u(X_u))+X_u\sum\limits_{v=1,v\neq u}^n b_{uv}X_v\big)e^{-x\tilde{\Theta}_{F_n}}\right]=-x\E_u\left[\big(\sum\limits_{v=1}^n b_{uv}X_v\big)^2\tau_u(X_u)e^{-x\tilde{\Theta}_{F_n}}\right],$$
and hence,
$$
\varphi'(x)=x\sum\limits_{u=1}^n\E\left[\big(\sum\limits_{v=1}^n b_{uv}X_v\big)^2\tau_u(X_u)e^{-x\tilde{\Theta}_{F_n}}\right]-\sum\limits_{u=1}^n\E[b_{uu}\tau_u(X_u)e^{-x\tilde{\Theta}_{F_n}}],\,\,x\geq 0.
$$
By the condition $(ii)$ we deduce
\begin{align*}
\varphi'(x)&\leq 2x\overline{\tau}_0\sum\limits_{u=1}^n\E\left[\big(\sum\limits_{v=1}^n b_{uv}X_v\big)^2e^{-x\tilde{\Theta}_{F_n}}\right]-\underline{\tau}_0\sigma_n^2\E[e^{-x\tilde{\Theta}_{F_n}}]\sum\limits_{u=1}^nb_{uu}\\
&= 2x\overline{\tau}_0\E\left[\sum\limits_{u=1}^n\big(\sum\limits_{v=1}^n b_{uv}X_v\big)^2e^{-x\tilde{\Theta}_{F_n}}\right]-2\underline{\tau}_0\sigma_n^2\E[e^{-x\tilde{\Theta}_{F_n}}],\,\,x\geq 0.
\end{align*}
Since the matrix $\lambda_{max}^{(n)}A_n^TA_n-(A_n^TA_n)^2$ is non-negative definite, this implies that $\sum\limits_{u=1}^n\big(\sum\limits_{v=1}^n b_{uv}X_v\big)^2\leq \lambda_{max}^{(n)}\tilde{\Theta}_{F_n}.$ We therefore obtain
\begin{align*}
\varphi'(x)&\leq 2\lambda_{max}^{(n)}\overline{\tau}_0x\E[\tilde{\Theta}_{F_n}e^{-x\tilde{\Theta}_{F_n}}]-2\underline{\tau}_0\sigma_n^2\E[e^{-x\tilde{\Theta}_{F_n}}]\\
&=-2\lambda_{max}^{(n)}\overline{\tau}_0x\varphi'(x)-2\underline{\tau}_0\sigma_n^2\varphi(x),\,\,x\geq 0.
\end{align*}
Solving the above differential inequality with the initial condition $\varphi(0)=1$ yields
$$\varphi(x)\leq (1+2\lambda_{max}^{(n)}\overline{\tau}_0x)^{-\frac{\underline{\tau}_0\sigma_n^2}{\lambda_{max}^{(n)}\overline{\tau}_0}},\,\,x\geq 0.$$
We now use the negative moment formula (\ref{iihg}) to get
\begin{align*}
\E[\tilde{\Theta}_{F_n}^{-8}]&=\frac{1}{\Gamma(8)}\int_0^\infty x^{7}\E[e^{-x\tilde{\Theta}_{F_n}}]dx\\
&\leq \frac{1}{\Gamma(8)}\int_0^\infty x^{7}(1+2\lambda_{max}^{(n)}\overline{\tau}_0x)^{-\frac{\underline{\tau}_0\sigma_n^2}{\lambda_{max}^{(n)}\overline{\tau}_0}}dx,
\end{align*}
which, by the substitution $y=\frac{1}{1+2c_n\overline{\tau}_0x},$ gives us
\begin{align*}
\E[\tilde{\Theta}_{F_n}^{-8}]&\leq  \frac{(2\lambda_{max}^{(n)}\overline{\tau}_0)^{-8}}{\Gamma(8)}\int_0^1 (1-y)^{7}y^{p-9}dy\,\,\,\text{(here $p:=\frac{\underline{\tau}_0\sigma_n^2}{\lambda_{max}^{(n)}\overline{\tau}_0}>8$)}\\
&=\frac{(2\lambda_{max}^{(n)}\overline{\tau}_0)^{-8}B(8,p-8)}{\Gamma(8)}\,\,\,\text{($B$ denotes the Beta function)}\\
&=\frac{(2\lambda_{max}^{(n)}\overline{\tau}_0)^{-8}\Gamma(p-8)}{\Gamma(p)}.
\end{align*}
It follows from the fundamental identity $\Gamma(x+1)=x\Gamma(x)$ that $\frac{\Gamma(p-8)}{\Gamma(p)}\leq \frac{8^7}{7!}p^{-8}.$ So we have
$$\E[\tilde{\Theta}_{F_n}^{-8}]\leq \frac{8^7}{7!}(2\lambda_{max}^{(n)}\overline{\tau}_0)^{-8}p^{-8}
=\frac{8^7}{7!}(2\underline{\tau}_0\sigma_n^2)^{-8}.$$
On the other hand, we have $\Theta_{F_n}\geq \frac{\underline{\tau}_0}{2}\tilde{\Theta}_{F_n}.$ So it holds that
\begin{equation}\label{8ui}
\|\sigma_n^2\Theta_{F_n}^{-1}\|_8\leq \bigg(\frac{8^7}{7!}\bigg)^{\frac{1}{8}}\underline{\tau}_0^{-2}.
\end{equation}
Inserting (\ref{8ui}) into the bound (\ref{kld1}), yields (\ref{fr5}). This completes the proof of the corollary.
\end{proof}
%\textcolor{blue}{\begin{exam}Let $\phi$ be the density of standard normal random variable and $\Phi(x)=\int_{-\infty}^x\phi(y)dy.$ $\beta:=\max\big\{\sup\limits_{k\geq 1}\E|X_k|^8,\sup\limits_{k\geq 1}\E|\tau_k(X_k)|^8,\sup\limits_{k\geq 1}\E|\tau'_k(X_k)|^8\big\}<\infty,$
%$$c\leq \frac{\phi(\Phi^{-1}(x))}{p_k(\nu_k^{-1}(x))}\leq C\,\,\forall\,x\in [0,1]$$
%for some $C\geq c>0,$ where $\nu_k$ denotes the cumulative distribution function of $X_k.$
%\end{exam}}
For the proof of Theorem \ref{uij8} we will need the following Lemmas \ref{ting1}, \ref{ting2} and \ref{ting3}.
\begin{lem}\label{ting1}Under assumption of Theorem \ref{uij8}, we have
$$\|\sigma_n^2-\Theta_{F_n}\|_4^2\leq C_\beta\left(\sum\limits_{k=1}^n\big(\sum\limits_{v=1}^na_{kv}^2\big)^2+\sum\limits_{u,v=1}^n \big(\sum\limits_{k=1}^na_{ku} a_{kv}\big)^2\right)\,\,\forall\,n\geq 1,$$
where $C_\beta$ is a positive constant depending only on $\beta.$
\end{lem}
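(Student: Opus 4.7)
The plan is to decompose $\Theta_{F_n}-\sigma_n^2$ into three manageable pieces and bound each in $L^4$ separately. Since $a_{uu}=0$, the auxiliary $W_u:=\sum_v a_{uv}X_v$ is independent of $X_u$, so $\E[W_u^2\tau_u(X_u)]=\sum_v a_{uv}^2$ and hence $\E[\Theta_{F_n}]=\sigma_n^2$. Expanding $W_u^2=\sum_v a_{uv}^2 X_v^2+\sum_{v\ne w}a_{uv}a_{uw}X_vX_w$ and summing over $u$, I will write $2(\Theta_{F_n}-\sigma_n^2)=\Delta_1+\Delta_2+\Delta_3$, where
\[\Delta_1=\sum_u W_u^2(\tau_u(X_u)-1),\quad \Delta_2=\sum_v c_v(X_v^2-1)\ \text{with}\ c_v:=\sum_u a_{uv}^2,\quad \Delta_3=\sum_{v\ne w}(A_n^2)_{vw}X_vX_w.\]
A triangle-inequality argument reduces the lemma to bounding each $\|\Delta_i\|_4^2$.

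The two ``polynomial'' pieces are routine. Since $\Delta_2$ is a sum of independent centered variables, a single application of (\ref{9hj4}) with $p=4$ yields $\|\Delta_2\|_4^2\le C_\beta\sum_v c_v^2=C_\beta\sum_k(\sum_v a_{kv}^2)^2$, using the symmetry of $A_n$. For $\Delta_3$, applying (\ref{9hj4}) along $k$ produces the residual $\Delta_3-\E_k[\Delta_3]=2X_k\sum_{v\ne k}(A_n^2)_{kv}X_v$; since $X_k$ is independent of the inner sum, which is itself a linear combination of independent centered variables, a second application of (\ref{9hj4}) to that sum gives $\|\Delta_3\|_4^2\le C_\beta\sum_{k,v}(A_n^2)_{kv}^2=C_\beta\,\mathrm{Tr}(A_n^4)$, i.e. the second term on the right-hand side.

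The hard part will be $\Delta_1$, since each summand $W_u^2(\tau_u(X_u)-1)$ couples $X_u$ (through $\tau_u$) with every other $X_v$ (through $W_u$). I will apply (\ref{9hj4}) to $\Delta_1$ along $k$. The index $u=k$ contributes $W_k^2(\tau_k(X_k)-1)$, controlled by independence as $\|W_k\|_8^2\,\|\tau_k(X_k)-1\|_4\le C_\beta\sum_v a_{kv}^2$ (the bound on $\|W_k\|_8^2$ itself coming from (\ref{9hj4})). For $u\ne k$, the split $W_u=a_{uk}X_k+\tilde W_u^{(k)}$ with $\tilde W_u^{(k)}:=\sum_{v\ne k}a_{uv}X_v$ makes the residual equal to $(\tau_u(X_u)-1)\bigl[a_{uk}^2(X_k^2-1)+2a_{uk}X_k\tilde W_u^{(k)}\bigr]$, producing two further pieces. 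The $X_k^2-1$ piece is handled by independence and Minkowski. The genuine obstacle is the remaining piece $X_k\sum_{u\ne k}a_{uk}\tilde W_u^{(k)}(\tau_u(X_u)-1)$: I will factor out $X_k$ by independence and apply (\ref{9hj4}) once more along some $j\ne k$, which generates an on-diagonal residual $a_{jk}\tilde W_j^{(k)}(\tau_j(X_j)-1)$ (bounded via independence of $\tilde W_j^{(k)}$ and $\tau_j(X_j)$) and an off-diagonal residual $X_j\sum_u a_{uk}a_{uj}(\tau_u(X_u)-1)$, a sum of independent mean-zero variables amenable to a third application of (\ref{9hj4}).

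All emerging double sums then collapse via identities such as $\sum_{k,u}a_{uk}^2\sum_v a_{uv}^2=\sum_u(\sum_v a_{uv}^2)^2$ and $\sum_{k,j,u}a_{uk}^2 a_{uj}^2=\sum_u(\sum_k a_{uk}^2)(\sum_j a_{uj}^2)$, so that $\|\Delta_1\|_4^2\le C_\beta\sum_k(\sum_v a_{kv}^2)^2$ and no $\mathrm{Tr}(A_n^4)$-type contribution enters from $\Delta_1$. Summing the three estimates gives the claim. The main technical point is the nested triple-layer Marcinkiewicz-Zygmund iteration for the last piece of $\Delta_1$, and the bookkeeping needed to verify that every residual sum falls into one of the two shapes listed in the lemma.
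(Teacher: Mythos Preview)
Your argument is correct and relies, as the paper does, on iterated applications of the Marcinkiewicz--Zygmund inequality (\ref{9hj4}); the difference lies in the order of operations. The paper applies (\ref{9hj4}) to $\Theta_{F_n}$ first, computing the residual $\Theta_{F_n}-\E_l\Theta_{F_n}$ and splitting it into three pieces (one carrying $\tau_l(X_l)-1$, one carrying $X_l^2-1$, and a cross-term $X_l\sum_{v\ne l}\bigl(\sum_k a_{kl}a_{kv}\tau_k(X_k)\bigr)X_v$); a second layer of (\ref{9hj4}) along $w$ handles the cross-term, and a third layer is applied to the inner weighted sum of $\tau_k(X_k)$. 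You instead perform an algebraic decomposition $2(\Theta_{F_n}-\sigma_n^2)=\Delta_1+\Delta_2+\Delta_3$ before ever invoking (\ref{9hj4}), isolating the Stein-kernel fluctuation $(\tau_u-1)$ in $\Delta_1$ and the pure-$X$ fluctuations in $\Delta_2,\Delta_3$. This buys you cleaner bookkeeping: your $\Delta_1$ contributes only the $\sum_k\bigl(\sum_v a_{kv}^2\bigr)^2$ shape (whereas the paper's cross-term produces both shapes, since its inner sum $\sum_k a_{kl}a_{kw}\tau_k(X_k)$ is not centered and the $|\E[U]|^2$ term in (\ref{9hj4}) yields $\mathrm{Tr}(A_n^4)$), and $\Delta_2,\Delta_3$ are short. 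The price is the triple nesting needed for the last piece of $\Delta_1$, comparable in depth to the paper's treatment of its cross-term. Neither route is strictly shorter; yours makes the provenance of each term in the final bound more transparent, while the paper's avoids the preliminary algebraic expansion.
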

\begin{proof}  By straightforward computations, we have
\begin{align*}
\Theta_{F_n}-\E_l \Theta_{F_n}&=\frac{1}{2}(\tau_l(X_l)-1)\big(\sum\limits_{u=1}^n a_{lu}X_u\big)^2\\
&+\frac{1}{2}(X^2_l-1)\sum\limits_{k=1}^n a^{2}_{kl}\tau_k(X_k)+X_l\sum\limits_{v=1,v\neq l}^n\big(\sum\limits_{k=1}^na_{kl} a_{kv}\tau_k(X_k)\big)X_v,\,\,1\leq l\leq n
\end{align*}
 (here we recall that $a_{uu}=0$ for all $1\leq u\leq n.$) Hence, by the triangle inequality, we obtain
\begin{align}
\|\Theta_{F_n}-\E_l \Theta_{F_n}\|_4\leq \frac{1}{2}\|\tau_l(X_l)-&1\|_4\big\|\sum\limits_{u=1}^n a_{lu}X_u\big\|_8^2+\frac{1}{2} \|X^2_l-1\|_4\big\|\sum\limits_{k=1}^n a^{2}_{kl}\tau_k(X_k)\big\|_4\notag\\
&+\|X_l\|_4\big\|\sum\limits_{v=1,v\neq l}^n\big(\sum\limits_{k=1}^na_{kl} a_{kv}\tau_k(X_k)\big)X_v\big\|_4,\,\,1\leq l\leq n.\label{bgj3}
\end{align}
By the inequality (\ref{9hj4})  we deduce
\begin{equation}\label{913a}
\big\|\sum\limits_{u=1}^n a_{lu}X_u\big\|_8^{2}\leq 7\max\limits_{1\leq u\leq n}\|X_u\|_8^{2}\sum\limits_{u=1}^n a^2_{lu},\,\,1\leq l\leq n
\end{equation}
and by the triangle inequality
\begin{equation}\label{913b}
\big\|\sum\limits_{k=1}^n a^{2}_{kl}\tau_k(X_k)\big\|_4\leq \sum\limits_{k=1}^n\big\| a^{2}_{kl}\tau_k(X_k)\big\|_4\leq \max\limits_{1\leq k\leq n}\|\tau_k(X_k)\|_4\sum\limits_{k=1}^n a^2_{kl},\,\,1\leq l\leq n.
\end{equation}
To estimate the third addend in the right hand side of (\ref{bgj3}), we put
$$W_l:=\sum\limits_{v=1,v\neq l}^n\big(\sum\limits_{k=1}^na_{kl} a_{kv}\tau_k(X_k)\big)X_v,\,\,1\leq l\leq n.$$
We have
$$W_l-\E_wW_l=\big(\sum\limits_{k=1}^na_{kl} a_{kw}\tau_k(X_k)\big)X_w+\sum\limits_{v=1,v\neq l,w}^na_{wl} a_{wv}(\tau_w(X_w)-1)X_v,\,\,1\leq w\leq n,$$
and hence,
\begin{align*}
\|W_l-\E_wW_l\|_4^2&\leq \bigg(\big\|\big(\sum\limits_{k=1}^na_{kl} a_{kw}\tau_k(X_k)\big)X_w\big\|_4+\big\|\sum\limits_{v=1,v\neq l,w}^na_{wl} a_{wv}(\tau_w(X_w)-1)X_v\big\|_4\bigg)^2\\
&=\bigg(\big\|\sum\limits_{k=1}^na_{kl} a_{kw}\tau_k(X_k)\big\|_4\|X_w\|_4+\big\|\sum\limits_{v=1,v\neq l,w}^na_{wl} a_{wv}X_v\big\|_4\|\tau_w(X_w)-1\|_4\bigg)^2\\
&\leq 2\max\limits_{1\leq w\leq n}\|X_w\|_4^{2}\big\|\sum\limits_{k=1}^na_{kl} a_{kw}\tau_k(X_k)\big\|_4^{2}\\
&+2\max\limits_{1\leq w\leq n}\|\tau_w(X_w)-1\|_4^{2}\big\|\sum\limits_{v=1,v\neq l,w}^na_{wl} a_{wv}X_v\big\|_4^{2},\,\,1\leq l,w\leq n.
\end{align*}
Once again, we use the inequality (\ref{9hj4}) to get
\begin{align*}
\|W_l-\E_wW_l\|_4^2&\leq 2\max\limits_{1\leq w\leq n}\|X_w\|_4^{2}\bigg(\big(\sum\limits_{k=1}^na_{kl} a_{kw}\big)^{2}+3\max\limits_{1\leq k\leq n}\|\tau_k(X_k)-1\|_4^{2}\sum\limits_{k=1}^n|a_{kl} a_{kw}|^{2}\bigg)\\
&+6\max\limits_{1\leq w\leq n}\|\tau_w(X_w)-1\|_4^{2}\max\limits_{1\leq w\leq n}\|X_w\|_4^{2}\sum\limits_{v=1}^n |a_{wl} a_{wv}|^{2}\\
&\leq 2\max\limits_{1\leq w\leq n}\|X_w\|_4^{2}\bigg(\big(\sum\limits_{k=1}^na_{kl} a_{kw}\big)^{2}+6\max\limits_{1\leq k\leq n}\|\tau_k(X_k)-1\|_4^{2}\sum\limits_{k=1}^n|a_{kl} a_{kw}|^{2}\bigg).
\end{align*}
Hence, we obtain
\begin{align}
\big\|&\sum\limits_{v=1,v\neq l}^n\big(\sum\limits_{k=1}^na_{kl}a_{kv}\tau_k(X_k)\big)X_v\big\|_4^2\notag\\
&=\|W_l\|_{4}^2\leq 3\sum\limits_{w=1}^n\|W_l-\E_wW_l|\|_4^{2}\notag\\
&\leq \max\limits_{1\leq w\leq n}\|X_w\|_4^{2}\bigg(\sum\limits_{w=1}^n\big(\sum\limits_{k=1}^na_{kl} a_{kw}\big)^{2}+6\max\limits_{1\leq k\leq n}\|\tau_k(X_k)-1\|_4^{2}\sum\limits_{w=1}^n\sum\limits_{k=1}^n|a_{kl} a_{kw}|^{2}\bigg).\label{r913b}
\end{align}
Inserting the estimates (\ref{913a}), (\ref{913b}) and (\ref{r913b}) into (\ref{bgj3}) yields
\begin{align}
\|&\Theta_{F_n}-\E_l \Theta_{F_n}\|_4^2\notag\\
&\leq \frac{3}{4}\|\tau_l(X_l)-1\|_4^2\big\|\sum\limits_{u=1}^n a_{lu}X_u\big\|_8^4+\frac{3}{4} \|X^2_l-1\|_4^2\big\|\sum\limits_{k=1}^n a^{2}_{kl}\tau_k(X_k)\big\|_4^2\notag\\
&+3\|X_l\|_4^2\big\|\sum\limits_{v=1,v\neq l}^n\big(\sum\limits_{k=1}^na_{kl} a_{kv}\tau_k(X_k)\big)X_v\big\|_4^2\notag\\
&\leq \frac{3\times49}{4}\|\tau_l(X_l)-1\|_4^2\max\limits_{1\leq u\leq n}\|X_u\|_8^{4}\big(\sum\limits_{u=1}^n a^2_{lu}\big)^2+\frac{3}{4} \|X^2_l-1\|_4^2\max\limits_{1\leq k\leq n}\|\tau_k(X_k)\|_4^2\big(\sum\limits_{k=1}^n a^2_{kl}\big)^2\notag\\
&+3\max\limits_{1\leq w\leq n}\|X_w\|_4^{4}\bigg(\sum\limits_{w=1}^n\big(\sum\limits_{k=1}^na_{kl} a_{kw}\big)^{2}+6\max\limits_{1\leq k\leq n}\|\tau_k(X_k)-1\|_4^{2}\sum\limits_{w=1}^n\sum\limits_{k=1}^n|a_{kl} a_{kw}|^{2}\bigg).\notag
\end{align}
Consequently, for some $C_\beta>0$ depending only on $\beta,$ we obtain
\begin{align}
\|\sigma_n^2-\Theta_{F_n}\|_4^2&\leq 3\sum\limits_{l=1}^n\|\Theta_{F_n}-\E_l \Theta_{F_n}\|_4^2\notag\\
&\leq C_\beta\left(\sum\limits_{k=1}^n\big(\sum\limits_{v=1}^na_{kv}^2\big)^2+\sum\limits_{u,v=1}^n \big(\sum\limits_{k=1}^na_{ku} a_{kv}\big)^2\right).\notag
\end{align}
This completes the proof of the lemma.
\end{proof}
\begin{lem}\label{ting2}Under the assumptions of Theorem \ref{uij8}, we have
$$\big\|\sum\limits_{k=1}^n|\partial_k\Theta_F|^2\tau_k(X_k)\big\|_2\leq C_\beta\left(\sum\limits_{k=1}^n\big(\sum\limits_{v=1}^na_{kv}^2\big)^2+\sum\limits_{u,v=1}^n \big(\sum\limits_{k=1}^na_{ku} a_{kv}\big)^2\right)\,\,\forall\,n\geq 1,$$
where $C_\beta$ is a positive constant depending only on $\beta.$
\end{lem}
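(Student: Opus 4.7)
The plan is to differentiate $\Theta_{F_n}=\frac12\sum_u S_u^2\tau_u(X_u)$ (with $S_u:=\sum_v a_{uv}X_v$), split $|\partial_k\Theta_{F_n}|^2\tau_k(X_k)$ into two pieces, and bound each in $L^2(\Omega)$ using Minkowski, H\"older, and the Marcinkiewicz--Zygmund inequality (\ref{9hj4}), in the same spirit as Lemma \ref{ting1}. The decisive structural fact is that $a_{kk}=0$, so $S_k$ is independent of $X_k$. A direct calculation (using $\partial_k\tau_u(X_u)=0$ for $u\ne k$ and $\partial_k S_k=0$) gives
\[
\partial_k\Theta_{F_n}=T_k+\tfrac12 S_k^2\tau'_k(X_k),\qquad T_k:=\sum_u a_{uk}S_u\tau_u(X_u).
\]
The inequality $(a+b)^2\le 2a^2+2b^2$ followed by Minkowski in $L^2(\Omega)$ reduces the goal to bounding $\sum_k\|T_k^2\tau_k(X_k)\|_2$ and $\sum_k\|S_k^4(\tau'_k(X_k))^2\tau_k(X_k)\|_2$ by the target quantity.

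The second sum is handled by independence: $\|S_k^4(\tau'_k)^2\tau_k\|_2^2=\E[S_k^8]\,\E[(\tau'_k(X_k))^4\tau_k(X_k)^2]$, the $\tau$-factor is bounded by a constant depending only on $\beta$ via H\"older, and $\|S_k\|_8^8\le C_\beta(\sum_v a_{kv}^2)^4$ via (\ref{9hj4}), so summation yields $C_\beta\sum_k(\sum_v a_{kv}^2)^2$. For the first sum, H\"older gives $\|T_k^2\tau_k\|_2\le\|T_k\|_8^2\,\|\tau_k\|_4\le C_\beta\|T_k\|_8^2$, so the task becomes estimating $\|T_k\|_8^2$ via (\ref{9hj4}). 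The mean $\E T_k$ vanishes since $\E[S_u\tau_u(X_u)]=\E[S_u]\E[\tau_u(X_u)]=0$ for $u\ne k$. The fluctuation $T_k-\E_l T_k$ must then be computed case by case: for $l=k$ it equals $X_k\sum_{u\ne k}a_{uk}^2\tau_u(X_u)$, and for $l\ne k$, tracking the $X_l$-dependence through both $\tau_l(X_l)$ and through each $S_u$ gives
\[
T_k-\E_l T_k=a_{lk}S_l(\tau_l(X_l)-1)+X_l\sum_{u\ne l,k}a_{uk}a_{ul}\tau_u(X_u).
\]
A further application of Minkowski, independence, and (\ref{9hj4}) applied to the inner sum bounds the $L^8$ norm by a constant times $a_{lk}^2\sum_v a_{lv}^2+(\sum_u a_{uk}a_{ul})^2+\sum_u a_{uk}^2 a_{ul}^2$.

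Assembling these estimates and summing over $k$ (and $l$), each term matches one of the two target quantities: $\sum_k(\sum_u a_{uk}^2)^2$, $\sum_{k,l}a_{lk}^2\sum_v a_{lv}^2$, and $\sum_{k,l,u}a_{uk}^2 a_{ul}^2$ all collapse into $\sum_k(\sum_v a_{kv}^2)^2$ using symmetry of $A_n$, while $\sum_{k,l}(\sum_u a_{uk}a_{ul})^2=\sum_{u,v}(\sum_k a_{ku}a_{kv})^2$ after relabeling. I expect the main obstacle to be the careful bookkeeping of the $X_l$-dependence of $T_k$ for $l\ne k$: the variable $X_l$ enters in two qualitatively different ways (once through $\tau_l(X_l)$, which couples with the $l$-th row $S_l$, and once linearly through every $a_{ul}X_l$ inside other $S_u$'s), and the delicate point is to show that the resulting fluctuation terms line up exactly with the two matrix invariants $\sum_k(\sum_v a_{kv}^2)^2$ and $\sum_{u,v}(\sum_k a_{ku}a_{kv})^2$. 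Once this decomposition is done, only routine $L^p$ estimates are required.
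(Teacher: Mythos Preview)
Your proposal is correct and follows essentially the same route as the paper's proof: the same splitting $\partial_k\Theta_{F_n}=T_k+\tfrac12 S_k^2\tau'_k(X_k)$ (the paper writes $T_k$ as $A_k=\sum_v(\sum_u a_{uk}a_{uv}\tau_u(X_u))X_v$, which is your expression regrouped), the same separate treatment of the $B$-part via Minkowski plus (\ref{9hj4}) on $S_k$, and the same bound on $\|T_k\|_8^2$ via (\ref{9hj4}) and the fluctuation formula $T_k-\E_lT_k=a_{lk}S_l(\tau_l(X_l)-1)+X_l\sum_{u}a_{uk}a_{ul}\tau_u(X_u)$. The only cosmetic difference is that the paper's single formula for $A_k-\E_lA_k$ already covers $l=k$ (since $a_{kk}=0$ kills the first term), so your separate $l=k$ case is not actually needed.
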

\begin{proof}We have
\begin{align*}
\partial_k\Theta_{F_n}&=\sum\limits_{u=1}^na_{uk}\tau_u(X_u)\sum\limits_{v=1}^n a_{uv}X_v+\frac{1}{2}\tau'_k(X_k)\big(\sum\limits_{v=1}^n a_{kv}X_v\big)^2\\
&=\sum\limits_{v=1}^n\big(\sum\limits_{u=1}^na_{uk}a_{uv}\tau_u(X_u)\big)X_v+\frac{1}{2}\tau'_k(X_k)\big(\sum\limits_{v=1}^n a_{kv}X_v\big)^2,\,\,1\leq k\leq n.
\end{align*}
Hence, we deduce
$$|\partial_k\Theta_{F_n}|^2\leq2\bigg(\sum\limits_{v=1}^n\big(\sum\limits_{u=1}^na_{uk}a_{uv}\tau_u(X_u)\big)X_v\bigg)^2+\frac{1}{2}|\tau'_k(X_k)|^2
\big(\sum\limits_{v=1}^n a_{kv}X_v\big)^4,\,\,1\leq k\leq n.$$
We put
$A_k:=\sum\limits_{v=1}^n\big(\sum\limits_{u=1}^na_{uk}a_{uv}\tau_u(X_u)\big)X_v$ for $1\leq k\leq n,$ and set
$$A:=\sum\limits_{k=1}^nA_k^2\tau_k(X_k),\,\,\,B:=\sum\limits_{k=1}^n|\tau'_k(X_k)|^2\tau_k(X_k)\big(\sum\limits_{v=1}^n a_{kv}X_v\big)^4.$$
Then, we obtain
$\sum\limits_{k=1}^n|\partial_k\Theta|^2\tau_k(X_k)\leq 2A+\frac{1}{2}B$
and
\begin{equation}\label{kkgg1}
\big\|\sum\limits_{k=1}^n|\partial_k\Theta|^2\tau_k(X_k)\big\|_2\leq 2\|A\|_2+\frac{1}{2}\|B\|_2.
\end{equation}
By using the triangle inequality and the inequality  (\ref{9hj4}) we obtain
\begin{align}
\|B\|_2&\leq \sum\limits_{k=1}^n\||\tau'_k(X_k)|^2\tau_k(X_k)\|_2\big\|\sum\limits_{v=1}^n a_{kv}X_v\big\|_8^4\notag\\
&\leq\max\limits_{1\leq k\leq n} \||\tau'_k(X_k)|^2\tau_k(X_k)\|_2\sum\limits_{k=1}^n\big\|\sum\limits_{v=1}^n a_{kv}X_v\big\|_8^4\notag\\
&\leq 49\max\limits_{1\leq k\leq n} \||\tau'_k(X_k)|^2\tau_k(X_k)\|_2\max\limits_{1\leq v\leq n}\|X_v\|_8^{4}\sum\limits_{k=1}^n\big(\sum\limits_{v=1}^n a_{kv}^2\big)^2.\label{kkgg2}
\end{align}
%$$\E_lA_k:=\sum\limits_{v=1,v\neq l}^n\big(\sum\limits_{u=1,u\neq l}^na_{uk}a_{uv}\tau_u(X_u)+a_{lk}a_{lv}\big)X_v$$
We now observe that, for each $1\leq k\leq n,$ $\E A_k=0$ and
$$A_k-\E_lA_k=(\tau_l(X_l)-1)\sum\limits_{v=1,v\neq l}^na_{lk}a_{lv}X_v+\big(\sum\limits_{u=1}^na_{uk}a_{ul}\tau_u(X_u)\big)X_l,\,\,1\leq l\leq n.$$
With several applications of  the inequality (\ref{9hj4}), we deduce
\begin{align*}
\|&A_k\|_8^2\leq 7\sum\limits_{l=1}^n\|A_k-\E_lA_k\|_8^2\\
&\leq 7\sum\limits_{l=1}^n\bigg(\|\tau_l(X_l)-1\|_8\big\|\sum\limits_{v=1,v\neq l}^na_{lk}a_{lv}X_v\big\|_8+\big\|\sum\limits_{u=1}^na_{uk}a_{ul}\tau_u(X_u)\big\|_8\|X_l\|_8\bigg)^2\\
&\leq 14\max\limits_{1\leq l\leq n}\|\tau_l(X_l)-1\|_8^2\sum\limits_{l=1}^n\big\|\sum\limits_{v=1,v\neq l}^na_{lk}a_{lv}X_v\big\|_8^2+ 14\max\limits_{1\leq l\leq n}\|X_l\|_8^2\sum\limits_{l=1}^n\big\|\sum\limits_{u=1}^na_{uk}a_{ul}\tau_u(X_u)\big\|_8^2\\
&\leq 14\max\limits_{1\leq l\leq n}\|\tau_l(X_l)-1\|_8^2\max\limits_{1\leq l\leq n}\|X_l\|_8^2\sum\limits_{l=1}^n\sum\limits_{v=1,v\neq l}^n|a_{lk}a_{lv}|^2\\
&+ 14\max\limits_{1\leq l\leq n}\|X_l\|_8^2\sum\limits_{l=1}^n\bigg(\big(\sum\limits_{u=1}^na_{uk}a_{ul}\big)^2+\max\limits_{1\leq l\leq n}\|\tau_u(X_u)-1\|_8^2\sum\limits_{u=1}^n|a_{uk}a_{ul}|^2\bigg)\\
&\leq  14\max\limits_{1\leq l\leq n}\|X_l\|_8^2\sum\limits_{l=1}^n\big(\sum\limits_{u=1}^na_{uk}a_{ul}\big)^2+ 28\max\limits_{1\leq l\leq n}\|\tau_l(X_l)-1\|_8^2\max\limits_{1\leq l\leq n}\|X_l\|_8^2\sum\limits_{l=1}^n\sum\limits_{v=1}^n|a_{lk}a_{lv}|^2.
\end{align*}
Note that, by the triangle inequality, we have
$$\|A\|_2\leq\sum\limits_{k=1}^n\|A_k^2\tau_k(X_k)\|_2\leq\sum\limits_{k=1}^n\|A_k\|_8^2\|\tau_k(X_k)\|_4\leq\max\limits_{1\leq k\leq n}\|\tau_k(X_k)\|_4\sum\limits_{k=1}^n\|A_k\|_8^2.$$
As a consequence, we deduce
\begin{multline}\label{kkgg3}
\|A\|_2\leq\max\limits_{1\leq k\leq n}\|\tau_k(X_k)\|_4\bigg(14\max\limits_{1\leq l\leq n}\|X_l\|_8^2\sum\limits_{u,v=1}^n \big(\sum\limits_{k=1}^na_{ku} a_{kv}\big)^2\\
+ 28\max\limits_{1\leq l\leq n}\|\tau_l(X_l)-1\|_8^2\max\limits_{1\leq l\leq n}\|X_l\|_8^2\sum\limits_{k=1}^n\big(\sum\limits_{v=1}^n a_{kv}^2\big)^2\bigg).
\end{multline}
So we get the desired estimate by inserting (\ref{kkgg2}) and (\ref{kkgg3})  into (\ref{kkgg1}).
\end{proof}

\begin{lem}\label{ting3}Under assumption of Theorem \ref{uij8}, we have
\begin{equation}\label{klf33}
\|F_n\|_8^2\leq C_\beta\sigma_n^2\,\,\forall\,n\geq 1
\end{equation}
and
\begin{equation}\label{klf39}
\bigg\|\frac{\sum\limits_{k=1}^n\frac{|\mathcal{L}_k\mathcal{M}_kF_n|^2}{\tau_k(X_k)}}{\Theta_{F_n}^4}\bigg\|_2\leq\frac{1}{4}\|\Theta_{F_n}^{-1}\|_8^3\,\,\forall\,n\geq 1,
\end{equation}
where $C_\beta$ is a positive constant depending only on $\beta.$
\end{lem}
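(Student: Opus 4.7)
The lemma has two independent parts. Both reduce, once the explicit formulas for $\mathcal{M}_k F_n$, $\mathcal{L}_k\mathcal{M}_k F_n$ and $\Theta_{F_n}$ are plugged in, to direct applications of the Marcinkiewicz–Zygmund inequality~(\ref{9hj4}) and Lyapunov's inequality, so the plan is really just a careful bookkeeping exercise.

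For the moment bound (\ref{klf33}), the plan is to apply (\ref{9hj4}) twice with $p=8$. Since $\E[F_n]=0$, the first application gives $\|F_n\|_8^2 \leq 7\sum_{l=1}^n \|F_n - \E_l[F_n]\|_8^2$. Using the symmetry of $A_n$ and the vanishing diagonal, a direct computation shows
\begin{equation*}
F_n - \E_l[F_n] = X_l \sum_{v\neq l} a_{lv} X_v,
\end{equation*}
and since $X_l$ is independent of $\sum_{v\neq l} a_{lv}X_v$, one has $\|F_n - \E_l[F_n]\|_8 = \|X_l\|_8\,\|\sum_{v\neq l}a_{lv}X_v\|_8$. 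A second application of (\ref{9hj4}) to the inner mean-zero sum yields $\|\sum_{v\neq l}a_{lv}X_v\|_8^2 \leq 7 \max_v \|X_v\|_8^{2}\sum_{v\neq l} a_{lv}^2$. Summing over $l$ and noting that $\sum_{l,v} a_{lv}^2 = 2\sigma_n^2$, I obtain $\|F_n\|_8^2 \leq 98\,\beta\,\sigma_n^2$, which is a bound of the required form.

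For (\ref{klf39}), the decisive observation is that the numerator simplifies to a multiple of $\Theta_{F_n}$ itself. Substituting $\mathcal{L}_k\mathcal{M}_k F_n = \tfrac{1}{2}\tau_k(X_k)\sum_{v=1}^n a_{kv}X_v$ gives
\begin{equation*}
\frac{|\mathcal{L}_k\mathcal{M}_k F_n|^2}{\tau_k(X_k)} = \frac{1}{4}\,\tau_k(X_k)\,\bigg(\sum_{v=1}^n a_{kv}X_v\bigg)^{\!2},
\end{equation*}
and comparing this with the definition $\Theta_{F_n} = \tfrac{1}{2}\sum_u \tau_u(X_u)(\sum_v a_{uv}X_v)^2$ yields the identity $\sum_{k=1}^n |\mathcal{L}_k\mathcal{M}_k F_n|^2/\tau_k(X_k) = \tfrac{1}{2}\Theta_{F_n}$. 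The ratio in the statement therefore equals a numerical multiple of $\Theta_{F_n}^{-3}$, and its $L^2$-norm is a numerical multiple of $\|\Theta_{F_n}^{-1}\|_6^3$. Lyapunov's inequality $\|\Theta_{F_n}^{-1}\|_6 \leq \|\Theta_{F_n}^{-1}\|_8$ then delivers the stated bound.

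Neither part involves any conceptual difficulty beyond managing constants; the main thing to get right is the double application of (\ref{9hj4}) in part (\ref{klf33}) and the algebraic simplification in part (\ref{klf39}) that cancels the $\sum_v a_{kv}X_v$ factor against $\Theta_{F_n}$, avoiding any need to control $\partial_k F_n$ or cross-terms separately.
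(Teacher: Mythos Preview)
Your proposal is correct and follows essentially the same route as the paper's proof: the paper also applies inequality~(\ref{9hj4}) twice to obtain $\|F_n\|_8^2\leq 98\max_k\|X_k\|_8^4\,\sigma_n^2$, and for~(\ref{klf39}) it makes the identical observation that the numerator collapses to a constant multiple of $\Theta_{F_n}$, reducing the ratio to a constant times $\Theta_{F_n}^{-3}$ and then invoking $\|\Theta_{F_n}^{-1}\|_6\leq\|\Theta_{F_n}^{-1}\|_8$.
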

\begin{proof} We have
$$F_n-\E_kF_n=X_k\sum\limits_{v=1}^n a_{kv}X_v,\,\,1\leq k\leq n.$$
By using the inequality (\ref{9hj4}), we obtain
\begin{align*}
\|F_n\|_8^2&\leq 7\sum\limits_{k=1}^n\big\|X_k\sum\limits_{v=1}^n a_{kv}X_v\big\|_8^2\\
&= 7\sum\limits_{k=1}^n\|X_k\|_8^2\big\|\sum\limits_{v=1}^n a_{kv}X_v\big\|_8^2\\
&\leq 49\max\limits_{1\leq k\leq n}\|X_k\|_8^4\sum\limits_{k=1}^n\sum\limits_{v=1}^n a^2_{kv}\\
&=98\max\limits_{1\leq k\leq n}\|X_k\|_8^4\sigma_n^2.
\end{align*}
So (\ref{klf33}) follows. In order to check (\ref{klf39}), we recall that
$$\mathcal{L}_k\mathcal{M}_kF_n=\frac{1}{2}\tau_k(X_k)\sum\limits_{v=1}^n a_{kv}X_v,\,\,1\leq k\leq n.$$
We therefore obtain
$$\bigg\|\frac{\sum\limits_{k=1}^n\frac{|\mathcal{L}_k\mathcal{M}_kF_n|^2}{\tau_k(X_k)}}{\Theta_{F_n}^4}\bigg\|_2=\frac{1}{4}\|\Theta_{F_n}^{-1}\|_6^3\leq \frac{1}{4}\|\Theta_{F_n}^{-1}\|_8^3.$$
The proof of the proposition is complete.
\end{proof}

\noindent{\it Proof of Theorem \ref{uij8}.} Applying Theorem \ref{gko} to $F=\sigma_n^{-1}F_n$ and $p=q=4,r=s=t=2,$ we obtain
$$
I(\sigma_n^{-1}F_n\|Z)\leq \frac{2}{\sigma_n^2}\|F_n\|_8^2\|\Theta_{F_n}^{-1}\|_8^2\|\sigma_n^2-\Theta_{F_n}\|_4^2
+2\sigma_n^2\bigg\|\frac{\sum\limits_{k=1}^n\frac{|\mathcal{L}_k\mathcal{M}_k{F_n}|^2}{\tau_k(X_k)}}{\Theta_{F_n}^4}\bigg\|_2
\big\|\sum\limits_{k=1}^n|\partial_k\Theta_{F_n}|^2\tau_k(X_k)\big\|_2.
$$
This, combined with the estimates obtained in Lemmas \ref{ting1}, \ref{ting2} and \ref{ting3}, gives us
$$I(\sigma_n^{-1}F_n\|Z)\leq \frac{C_\beta}{\sigma_n^4}\left(\sum\limits_{k=1}^n\big(\sum\limits_{v=1}^na_{kv}^2\big)^2+\sum\limits_{u,v=1}^n \big(\sum\limits_{k=1}^na_{ku} a_{kv}\big)^2\right)(\|\sigma_n^2\Theta_{F_n}^{-1}\|_8^2+\|\sigma_n^2\Theta_{F_n}^{-1}\|_8^3)$$
for some $C_\beta>0.$ We now note that $\E[\Theta_{F_n}]=\sigma_n^2,$ and hence, by Jensen's inequality, $\|\sigma_n^2\Theta_{F_n}^{-1}\|_8\geq 1.$ So we get the desired bound (\ref{kld1}). The proof of Theorem \ref{uij8} is complete.
\subsection{Functions of sample means}\label{89b}
Let $H:\mathbb{R}\to \mathbb{R}$ be a twice differentiable function with bounded derivatives. We consider the statistic
\begin{equation}\label{i4}
F_n:=\sqrt{n}\left(H(\bar{X})-\E[H(\bar{X})]\right),\,\,n\geq 1,
\end{equation}
where $\bar{X}:=\frac{X_1 + \dots + X_n}{n}.$ This statistic is a generalization of the normalized sums discussed in Introduction, $F_n$ reduces to $Z_n$ when $H(x)=x$ and $X_k's$ are i.i.d random variables. The central limit theorem for $F_n$ and the Berry-Esseen error bound with rate $O(n^{-1/2})$ have been well studied, see e.g. \cite{Bentkus1997}.  Here we use Theorem \ref{8uh4} to estimate the Fisher information distance. %We expect, under suitable assumptions, to obtain the following rate of convergence
%$$I(\sigma_n^{-1}F_n\|Z)\leq \frac{C}{n},$$
%where $\sigma_n^2:={\rm Var}(F_n).$
Before stating and proving the main result of this subsection, we prepare some technical lemmas. In the proofs, we frequently use the following estimate
$$\E|\bar{X}|^p\leq (p-1)^{\frac{p}{2}}n^{-\frac{p}{2}}\max\limits_{1\leq k\leq n}\E|X_k|^p,\,\,p\geq 2,$$
which is a direct consequence of the Marcinkiewicz-Zygmund inequality (\ref{9hj4}).
\begin{lem}\label{re1} Assume that $\E|X_k|^4<\infty$ for all $k\geq 1.$ Then, for all $n\geq 1,$ we have
%$$\sigma_n^2:={\rm Var}(F_n)$$
\begin{equation}\label{old1}
|{\rm Var}(F_n)-|H'(0)|^2|\leq \frac{C_H}{\sqrt{n}}\max\limits_{1\leq k\leq n}\E|X_k|^4
\end{equation}
and
\begin{equation}\label{old2}
\big\|F_n-\sqrt{n}H'(0)\bar{X}\big\|_2^2\leq \frac{C_H}{n}\max\limits_{1\leq k\leq n}\E|X_k|^4,
\end{equation}
where $C_H$ is a positive constant depending only on $\|H'\|_\infty$ and $\|H''\|_\infty.$
\end{lem}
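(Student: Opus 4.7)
My plan is to Taylor expand $H$ at the origin and exploit $\E[\bar{X}]=0$. Write
\[
H(\bar{X})=H(0)+H'(0)\bar{X}+R(\bar{X}),\qquad |R(x)|\le \tfrac{1}{2}\|H''\|_\infty x^2,
\]
so that after centering the constant $H(0)$ vanishes and
\[
F_n-\sqrt{n}\,H'(0)\bar{X}=\sqrt{n}\bigl(R(\bar{X})-\E[R(\bar{X})]\bigr).
\]
The only moment input I will need is the Marcinkiewicz--Zygmund bound (\ref{9hj4}) with $p=4$, which gives $\E[\bar{X}^4]\le 9n^{-2}\max_k\E|X_k|^4$ (this is exactly the estimate recalled just before the lemma).

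For (\ref{old2}) I take the squared $L^2$ norm of the displayed identity and drop the centering:
\[
\|F_n-\sqrt{n}\,H'(0)\bar{X}\|_2^2=n\,{\rm Var}(R(\bar{X}))\le n\,\E[R(\bar{X})^2]\le \tfrac{n}{4}\|H''\|_\infty^2\,\E[\bar{X}^4],
\]
and the Marcinkiewicz--Zygmund bound turns the right-hand side into $\tfrac{9\|H''\|_\infty^2}{4n}\max_k\E|X_k|^4$, which is (\ref{old2}) with $C_H=\tfrac{9}{4}\|H''\|_\infty^2$.

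For (\ref{old1}) I expand ${\rm Var}(F_n)=n\,{\rm Var}(H(\bar{X}))$ through the same Taylor decomposition:
\[
{\rm Var}(F_n)=n|H'(0)|^2{\rm Var}(\bar{X})+2nH'(0)\,{\rm Cov}\bigl(\bar{X},R(\bar{X})\bigr)+n\,{\rm Var}(R(\bar{X})).
\]
Independence and unit variance give ${\rm Var}(\bar{X})=1/n$, so the first term equals $|H'(0)|^2$ exactly. Cauchy--Schwarz combined with the preceding estimate bounds the cross term by $n\,|{\rm Cov}(\bar{X},R(\bar{X}))|\le \tfrac{3\|H''\|_\infty}{2\sqrt{n}}\sqrt{\max_k\E|X_k|^4}$, while the third term is controlled by $\tfrac{9\|H''\|_\infty^2}{4n}\max_k\E|X_k|^4$ as in the previous step. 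Both contributions are $O(n^{-1/2})$.

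There is no genuine obstacle, only a small cosmetic point: the cross term naturally produces $\sqrt{\max_k\E|X_k|^4}$, whereas (\ref{old1}) is stated with $\max_k\E|X_k|^4$. This is absorbed by observing that $\E X_k^2=1$ forces $\max_k\E|X_k|^4\ge 1$ by Jensen, hence $\sqrt{\max_k\E|X_k|^4}\le \max_k\E|X_k|^4$. Using $|H'(0)|\le \|H'\|_\infty$ in the cross-term constant then makes the final $C_H$ depend only on $\|H'\|_\infty$ and $\|H''\|_\infty$, as required.
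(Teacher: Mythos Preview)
Your proof is correct and follows essentially the same route as the paper: Taylor expand $H$ at $0$, identify $F_n-\sqrt{n}H'(0)\bar X$ with the centered second-order remainder, and control everything through $\E[\bar X^4]$ via the Marcinkiewicz--Zygmund bound. The only cosmetic difference is in the cross term for (\ref{old1}): the paper writes the remainder explicitly as $\tfrac12 H''(\zeta)\bar X^2$, so the covariance becomes $\tfrac12\E[H''(\zeta)\bar X^3]$ and is bounded directly by $\tfrac12\|H''\|_\infty\,\E|\bar X|^3$ together with the third-moment MZ estimate, whereas you use Cauchy--Schwarz on ${\rm Cov}(\bar X,R(\bar X))$; both give the $n^{-1/2}$ rate and the same dependence on $\|H'\|_\infty,\|H''\|_\infty$ after the Jensen step $\sqrt{\max_k\E|X_k|^4}\le \max_k\E|X_k|^4$.
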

\begin{proof} Using the Taylor expansion of $H(x)$ at $x=0$ we obtain
$$H(\bar{X})=H(0)+H'(0)\bar{X}+\frac{1}{2}H''(\zeta)\bar{X}^2$$
for some random variable $\zeta.$ Hence, we deduce
$$F_n=\sqrt{n}H'(0)\bar{X}+\frac{\sqrt{n}}{2}H''(\zeta)\bar{X}^2-\frac{\sqrt{n}}{2}\E[H''(\zeta)\bar{X}^2]$$
and
$$
{\rm Var}(F_n)=|H'(0)|^2+nH'(0)\E[H''(\zeta)\bar{X}^3]+\frac{n}{4}{\rm Var}(H''(\zeta)\bar{X}^2).
$$
Consequently,
\begin{align*}
|{\rm Var}(F_n)-|H'(0)|^2|&\leq n\|H'\|_\infty\|H''\|_\infty\E|\bar{X}|^3+\frac{n\|H''\|_\infty^2}{4}\E|\bar{X}|^4\\
&\leq \frac{2^{3/2}\|H'\|_\infty\|H''\|_\infty}{\sqrt{n}}\max\limits_{1\leq k\leq n}\E|X_k|^3+\frac{9\|H''\|_\infty^2}{4n}\max\limits_{1\leq k\leq n}\E|X_k|^4.
\end{align*}
So (\ref{old1}) is proved. On the other hand, we have
\begin{align*}
\big\|F_n-\sqrt{n}H'(0)\bar{X}\big\|_2^2&=\frac{n}{4}\E|H''(\zeta)\bar{X}^2-\E[H''(\zeta)\bar{X}^2]|^2\\
&\leq \frac{n\|H''\|_\infty^2}{4}\E|\bar{X}|^4\\
&\leq \frac{9\|H''\|_\infty^2}{4n}\max\limits_{1\leq k\leq n}\E|X_k|^4,
\end{align*}
and we obtain (\ref{old2}). The proof of the lemma is complete.
\end{proof}
\begin{lem}\label{re2} Define
$$\nabla_{F_n}:=\frac{H'(0)H'(\bar{X})}{n}\sum\limits_{k=1}^n\tau_k(X_k).$$
Assume that $\E|X_k|^8+\E|\tau_k(X_k)|^8<\infty$ for all $k\geq 1.$ Then, for all $n\geq 1,$  we have
\begin{equation}\label{rold3}
\|{\rm Var}(F_n)-\nabla_{F_n}\|_4^2\leq\frac{C_H}{n}\max\limits_{1\leq k\leq n}\|X_k\|_8^4\max\limits_{1\leq k\leq n}\|\tau_k(X_k)\|_8^2,
\end{equation}
where $C_H$ is a positive constant depending only on $\|H'\|_\infty$ and $\|H''\|_\infty.$
\end{lem}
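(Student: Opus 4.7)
The plan is to write
\[
{\rm Var}(F_n) - \nabla_{F_n} = \bigl({\rm Var}(F_n) - |H'(0)|^2\bigr) + \bigl(|H'(0)|^2 - \nabla_{F_n}\bigr)
\]
and to bound each summand in $L^4$ by a quantity of order $C_H n^{-1/2}\max_k\|X_k\|_8^{2}\max_k\|\tau_k(X_k)\|_8$. Once this is achieved, squaring and the elementary inequality $(a+b)^2\le 2(a^2+b^2)$ give the stated bound.

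For the first (deterministic) piece, Lemma \ref{re1} already yields $|{\rm Var}(F_n) - |H'(0)|^2| \le C_H n^{-1/2}\max_k\E|X_k|^4$. To match the right-hand side of the target, I would invoke the Stein-kernel identity $\E[X_k^4] = 3\,\E[X_k^2\tau_k(X_k)]$, which follows from Lemma \ref{kflo} applied with $\alpha(x)=x^3$ and $\beta(x)=x$. Cauchy--Schwarz then gives $\|X_k\|_4^4 \le 9\|\tau_k(X_k)\|_2^2 \le 9\|\tau_k(X_k)\|_8^2$, and combined with the trivial $\|X_k\|_4^4 \le \|X_k\|_8^4$, this converts the awkward $\max_k\|X_k\|_4^8$ (obtained upon squaring) into $C\max_k\|X_k\|_8^4\max_k\|\tau_k(X_k)\|_8^2$.

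For the second piece I would Taylor-expand $H'(\bar{X}) = H'(0) + H''(\xi)\bar{X}$ at $0$, which gives
\[
|H'(0)|^2 - \nabla_{F_n} = |H'(0)|^2\left(1-\frac{1}{n}\sum_{k=1}^n\tau_k(X_k)\right) - \frac{H'(0)H''(\xi)\bar{X}}{n}\sum_{k=1}^n\tau_k(X_k).
\]
Since $\E[\tau_k(X_k)]=1$ and the $\tau_k(X_k)$ are independent, the Marcinkiewicz--Zygmund inequality (\ref{9hj4}) bounds $\bigl\|\frac{1}{n}\sum_k(\tau_k(X_k)-1)\bigr\|_p$ by a constant multiple of $n^{-1/2}\max_k\|\tau_k(X_k)-1\|_p$ for $p=4,8$, handling the first summand. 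For the second summand I would apply Cauchy--Schwarz to split the product $\bar X\cdot\frac{1}{n}\sum_k\tau_k(X_k)$, then use $\|\bar X\|_8 \le Cn^{-1/2}\max_k\|X_k\|_8$ together with $\bigl\|\frac{1}{n}\sum_k\tau_k(X_k)\bigr\|_8 \le 1 + Cn^{-1/2}\max_k\|\tau_k(X_k)-1\|_8 \le C\max_k\|\tau_k(X_k)\|_8$; the last step exploits $\|\tau_k(X_k)\|_8 \ge 1$, which is a consequence of $\tau_k(X_k)\ge 0$ and $\E[\tau_k(X_k)]=1$. Because $\|X_k\|_8\ge\|X_k\|_2=1$, all the lower powers can be freely absorbed into $\|X_k\|_8^4\|\tau_k(X_k)\|_8^2$.

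The only genuine obstacle is the absorption of the factor $\max_k\|X_k\|_4^8$ (arising from squaring the bound of Lemma \ref{re1}) into the cleaner expression $\max_k\|X_k\|_8^4\max_k\|\tau_k(X_k)\|_8^2$ appearing in the conclusion; everything else is a routine application of the triangle, Hölder and Marcinkiewicz--Zygmund inequalities. The Stein-kernel identity $\E[X_k^4] \le 9\,\E[\tau_k(X_k)^2]$ is the key bridge.
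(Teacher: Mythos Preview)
Your decomposition is exactly the paper's: write $\nabla_{F_n}-{\rm Var}(F_n)$ as the sum of the centered $\tau$-piece $\frac{|H'(0)|^2}{n}\sum_k(\tau_k(X_k)-1)$, the cross term $\frac{H'(0)H''(\eta)}{n}\bar X\sum_k\tau_k(X_k)$, and the deterministic piece $|H'(0)|^2-{\rm Var}(F_n)$, then bound each with (\ref{9hj4}), H\"older, and Lemma \ref{re1}. The paper organizes the split as three terms at once via $(a+b+c)^2\le 3(a^2+b^2+c^2)$ rather than your two-then-two, but the pieces and the estimates are identical.

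Where you actually go beyond the paper is the final absorption. The paper closes by citing only $\|X_k\|_8\ge 1$ and $\|\tau_k(X_k)\|_8\ge 1$, but these inequalities alone do not convert $(\max_k\E|X_k|^4)^2=\max_k\|X_k\|_4^8$ into $\max_k\|X_k\|_8^4\max_k\|\tau_k(X_k)\|_8^2$ (take $X_k$ standard Gaussian: $\tau_k\equiv 1$ while $\|X_k\|_8>1$). Your Stein-kernel identity $\E[X_k^4]=3\E[X_k^2\tau_k(X_k)]$, hence $\|X_k\|_4^4\le 9\|\tau_k(X_k)\|_2^2$, is precisely the missing bridge, and you apply it correctly by splitting one factor of $\|X_k\|_4^4$ off and bounding the other by $\|X_k\|_8^4$. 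One minor technicality: Lemma \ref{kflo} as stated requires $\alpha$ bounded, so $\alpha(x)=x^3$ needs a routine truncation argument, which the eighth-moment assumption comfortably supports.
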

\begin{proof}We have
\begin{align}
\nabla_{F_n}&=\frac{|H'(0)|^2}{n}\sum\limits_{k=1}^n\tau_k(X_k)+\frac{H'(0)(H'(\bar{X})-H'(0))}{n}\sum\limits_{k=1}^n\tau_k(X_k).\notag
\end{align}
Hence, by the Taylor expansion, we deduce
$$\nabla_{F_n}=\frac{|H'(0)|^2}{n}\sum\limits_{k=1}^n\tau_k(X_k)+\frac{H'(0)H''(\eta)}{n}\bar{X}\sum\limits_{k=1}^n\tau_k(X_k)$$
for some random variable $\eta.$ We obtain
%$${\rm Var}(F_n)=|H'(0)|^2+nH'(0)\E[H''(\zeta)\bar{X}^3]+\frac{n}{4}{\rm Var}(H''(\zeta)\bar{X}^2).$$
\begin{align*}
\nabla_{F_n}&-{\rm Var}(F_n)=\frac{|H'(0)|^2}{n}\sum\limits_{k=1}^n(\tau_k(X_k)-1)\\
&+\frac{H'(0)H''(\eta)}{n}\bar{X}\sum\limits_{k=1}^n\tau_k(X_k)
-{\rm Var}(F_n)+|H'(0)|^2,
\end{align*}
and hence,
\begin{align}
\|\nabla_{F_n}&-{\rm Var}(F_n)\|_4^2\leq\frac{3|H'(0)|^4}{n^2}\big\|\sum\limits_{k=1}^n(\tau_k(X_k)-1)\big\|_4^2\notag\\
&+\frac{3\|H'\|_\infty^2\|H''\|_\infty^2}{n^2}\big\|\bar{X}\sum\limits_{k=1}^n\tau_k(X_k)\big\|_4^2+3|{\rm Var}(F_n)-|H'(0)|^2|^2.\label{old1a3}
\end{align}
By using the inequality (\ref{9hj4}) and the Cauchy-Schwarz inequality we get
\begin{equation}\label{old1a1}
\big\|\sum\limits_{k=1}^n(\tau_k(X_k)-1)\big\|_4^2\leq 3\sum\limits_{k=1}^n\|\tau_k(X_k)-1\|_4^2\leq 3n \max\limits_{1\leq k\leq n}\|\tau_k(X_k)-1\|_4^2
\end{equation}
and
\begin{align}
\big\|\bar{X}\sum\limits_{k=1}^n\tau_k(X_k)\big\|_4^2&\leq \|\bar{X}\|_8^2\big\|\sum\limits_{k=1}^n\tau_k(X_k)\big\|_8^2\notag\\
&\leq \frac{7}{n^2}\sum\limits_{k=1}^n\|X_k\|_8^2\big(\sum\limits_{k=1}^n\|\tau_k(X_k)\|_8\big)^2\notag\\
&\leq 7n \max\limits_{1\leq k\leq n}\|X_k\|_8^2\max\limits_{1\leq k\leq n}\|\tau_k(X_k)\|_8^2.\label{old1a2}
\end{align}
Inserting the estimates (\ref{old1a1}) and (\ref{old1a2}) into (\ref{old1a3}) gives us
\begin{align*}
\|\nabla_{F_n}-&{\rm Var}(F_n)\|_4^2\leq\frac{9|H'(0)|^4}{n}\max\limits_{1\leq k\leq n}\|\tau_k(X_k)-1\|_4^2\\
&+\frac{21\|H'\|_\infty^2\|H''\|_\infty^2}{n}\max\limits_{1\leq k\leq n}\|X_k\|_8^2\max\limits_{1\leq k\leq n}\|\tau_k(X_k)\|_8^2+3|{\rm Var}(F_n)-|H'(0)|^2|^2.
\end{align*}
We now note that $\|\tau_k(X_k)\|_8\geq \|\tau_k(X_k)\|_4\geq 1$ because $\E[\tau_k(X_k)]=1$ and $\|X_k\|_8\geq \|X_k\|_4\geq 1$ because $\E|X_k|^2=1.$ Hence, recalling (\ref{old1}), we conclude that there exists $C_H>0$ such that
$$\|\nabla_{F_n}-{\rm Var}(F_n)\|_4^2\leq \frac{C_H}{n}\max\limits_{1\leq k\leq n}\|X_k\|_8^4\max\limits_{1\leq k\leq n}\|\tau_k(X_k)\|_8^2.$$
This finishes the proof of (\ref{rold3}). The proof of the lemma is complete.
\end{proof}
\begin{lem}\label{re3}Let $\nabla_{F_n}$ be as in Lemma \ref{re2}. Assume that $\E|\tau_k(X_k)|^8+\E|\tau'_k(X_k)|^8<\infty$ for all $k\geq 1.$ Then, for all $n\geq 1,$  we have
\begin{equation}\label{y9io2}
\big\|\sum\limits_{k=1}^n \partial_k\nabla_{F_n}\tau_k(X_k)\big\|_4^2\leq C_H\max\limits_{1\leq k\leq n}(\|\tau'_k(X_k)\|_8^4+\|\tau_k(X_k)\|_8^4),
\end{equation}
where $C_H$ is a positive constant depending only on $\|H'\|_\infty$ and $\|H''\|_\infty.$
\end{lem}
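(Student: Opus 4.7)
The plan is a direct computation of $\partial_k \nabla_{F_n}$ followed by splitting into two pieces, each of which is handled by the triangle inequality and Cauchy–Schwarz, with the factors of $n$ in the denominator canceling against the $L^p$ growth of the sums.

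Since $\bar X = \frac{1}{n}\sum_{j=1}^n X_j$, we have $\partial_k \bar X = 1/n$, so by the chain/product rule
\[
\partial_k \nabla_{F_n} = \frac{H'(0) H''(\bar X)}{n^2} \sum_{j=1}^n \tau_j(X_j) + \frac{H'(0) H'(\bar X)}{n}\, \tau'_k(X_k).
\]
Multiplying by $\tau_k(X_k)$ and summing over $k$ collapses the first term into a square, giving
\[
\sum_{k=1}^n \partial_k \nabla_{F_n}\,\tau_k(X_k) = \frac{H'(0)H''(\bar X)}{n^2}\Bigl(\sum_{j=1}^n \tau_j(X_j)\Bigr)^{\!2} + \frac{H'(0)H'(\bar X)}{n}\sum_{k=1}^n \tau'_k(X_k)\tau_k(X_k).
\]
This is the decomposition I would work with.

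Next I would take $L^4$ norms and apply the triangle inequality on this two-term sum, using the boundedness of $H'$ and $H''$ to pull out the constants $\|H'\|_\infty$ and $\|H''\|_\infty$. For the first piece, $\|(\sum_j \tau_j(X_j))^2\|_4 = \|\sum_j \tau_j(X_j)\|_8^2$, and the triangle inequality in $L^8$ gives $\|\sum_j \tau_j(X_j)\|_8 \le n \max_j \|\tau_j(X_j)\|_8$, so the $1/n^2$ prefactor kills the $n^2$. For the second piece, the triangle inequality combined with Cauchy–Schwarz yields $\|\sum_k \tau'_k(X_k)\tau_k(X_k)\|_4 \le \sum_k \|\tau'_k(X_k)\|_8 \|\tau_k(X_k)\|_8 \le n \max_k \|\tau'_k(X_k)\|_8 \|\tau_k(X_k)\|_8$, and the $1/n$ prefactor cancels.

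Collecting the two bounds, squaring via $(a+b)^2 \le 2a^2+2b^2$, and finally using $ab \le \tfrac12(a^2+b^2)$ on the mixed product $\|\tau'_k(X_k)\|_8^2 \|\tau_k(X_k)\|_8^2$ produces a bound of the form $C_H \max_{1\le k\le n}(\|\tau'_k(X_k)\|_8^4 + \|\tau_k(X_k)\|_8^4)$, which is exactly~\eqref{y9io2}. There is no real obstacle here, unlike in Lemmas~\ref{ting1} and~\ref{re2}: because the Marcinkiewicz–Zygmund inequality~\eqref{9hj4} is not needed (triangle suffices, since all $n$-cancellations are exact), the only mild point to watch is bookkeeping of the two distinct constants $\|H'\|_\infty^2\|H''\|_\infty^2$ and $\|H'\|_\infty^4$ into a single $C_H$ at the end.
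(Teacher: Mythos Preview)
Your proposal is correct and follows the paper's proof essentially line for line: the same decomposition of $\sum_k \partial_k\nabla_{F_n}\tau_k(X_k)$ into two terms, the same use of $\|(\sum_j \tau_j)^2\|_4=\|\sum_j\tau_j\|_8^2$ with the triangle inequality in $L^8$ for the first, the same Cauchy--Schwarz step $\|\tau'_k\tau_k\|_4\le\|\tau'_k\|_8\|\tau_k\|_8$ for the second, and the same final $(a+b)^2\le 2a^2+2b^2$ and $ab\le\tfrac12(a^2+b^2)$ clean-up. Your observation that the Marcinkiewicz--Zygmund inequality is unnecessary here is also exactly right.
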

\begin{proof} We recall that $\nabla_{F_n}=\frac{H'(0)H'(\bar{X})}{n}\sum\limits_{i=1}^n\tau_i(X_i),$ which gives us
$$\partial_k\nabla_{F_n}=\frac{H'(0)H''(\bar{X})}{n^2}\sum\limits_{i=1}^n\tau_i(X_i)+\frac{H'(0)H'(\bar{X})}{n}\tau_k'(X_k),\,\,1\leq k\leq n.$$
We therefore deduce
$$\sum\limits_{k=1}^n \partial_{k}\nabla_{F_n}\tau_k(X_k)=\frac{H'(0)H''(\bar{X})}{n^2}\big(\sum\limits_{k=1}^n\tau_k(X_k)\big)^2
+\frac{H'(0)H'(\bar{X})}{n}\sum\limits_{k=1}^n\tau_k'(X_k)\tau_k(X_k).$$
So we obtain  (\ref{y9io2}) because
\begin{align*}
\big\|\sum\limits_{k=1}^n \partial_k\nabla_F\tau_k(X_k)\big\|_4^2&\leq\frac{2\|H'\|_\infty^2\|H''\|_\infty^2}{n^4}\big\|\sum\limits_{k=1}^n\tau_k(X_k)\big\|_8^4
+\frac{2\|H'\|_\infty^4}{n^2}\big\|\sum\limits_{k=1}^n\tau_k'(X_k)\tau_k(X_k)\big\|_4^2\\
&\leq\frac{2\|H'\|_\infty^2\|H''\|_\infty^2}{n^4}\big(\sum\limits_{k=1}^n\|\tau_k(X_k)\|_8\big)^4
+\frac{2\|H'\|_\infty^4}{n^2}\big(\sum\limits_{k=1}^n\|\tau_k'(X_k)\tau_k(X_k)\|_4\big)^2\\
&\leq2\|H'\|_\infty^2\|H''\|_\infty^2\max\limits_{1\leq k\leq n}\|\tau_k(X_k)\|_8^4
+\|H'\|_\infty^4\max\limits_{1\leq k\leq n}(\|\tau'_k(X_k)\|_8^4+\|\tau_k(X_k)\|_8^4).
\end{align*}
The proof of the lemma is complete.
\end{proof}
We now are ready to state and prove the main result of this subsection.
\begin{thm}\label{kgl1}Consider the statistic $F_n$ defined by (\ref{i4}), where $H$ is a twice differentiable function with bounded derivatives and $H'(0)\neq 0.$ Assume that
$$\beta:=\max\big\{\sup\limits_{k\geq 1}\E|X_k|^8,\sup\limits_{k\geq 1}\E|\tau_k(X_k)|^8,\sup\limits_{k\geq 1}\E|\tau'_k(X_k)|^8\big\}<\infty.$$
Then, for $\sigma_n^2:={\rm Var}(F_n),$ we have
\begin{equation}\label{uir0}
|\sigma_n^2-|H'(0)|^2|\leq \frac{C_{H,\beta}}{\sqrt{n}}\,\,\forall\,n\geq 1
\end{equation}
and
\begin{equation}\label{uiomd}
I(\sigma_n^{-1}F_n\|Z)\leq \frac{C_{H,\beta}}{n\sigma_n^2}\left(1+\|\nabla_{F_n}^{-1}\|_8^2+\|\nabla_{F_n}^{-1}\|_8^4\right)\,\,\forall\,n\geq 1,
\end{equation}
where $C_{H,\beta}$ is a positive constant depending only on $\beta,\|H'\|_\infty$ and $\|H''\|_\infty$ and $\nabla_{F_n}$ is defined by
$$\nabla_{F_n}:=\frac{H'(0)H'(\bar{X})}{n}\sum\limits_{k=1}^n\tau_k(X_k).$$
\end{thm}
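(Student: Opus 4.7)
The first assertion (\ref{uir0}) is a direct reading of Lemma \ref{re1}: inequality (\ref{old1}) gives $|\sigma_n^2-|H'(0)|^2|\leq C_H\max_{1\leq k\leq n}\E|X_k|^4/\sqrt{n}\leq C_{H,\beta}/\sqrt{n}$. In particular this forces $\sigma_n^2\leq|H'(0)|^2+C_{H,\beta}$ for every $n\geq 1$, a fact I will recycle below.

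For (\ref{uiomd}) the natural plan is to apply Theorem \ref{8uh4} to $F:=\sigma_n^{-1}F_n$ with the linear approximation suggested by the Taylor expansion $F_n\approx\sqrt{n}H'(0)\bar X$, namely
$$g_k(X_k):=\frac{H'(0)}{\sigma_n\sqrt{n}}X_k,\qquad 1\leq k\leq n.$$
Since $\E[X_k]=0$, one computes $\mathcal{L}_kg_k=\frac{H'(0)}{\sigma_n\sqrt{n}}\tau_k(X_k)$ and $\partial_kF=\frac{H'(\bar X)}{\sigma_n\sqrt{n}}$; multiplying and summing yields exactly $\nabla_F=\sigma_n^{-2}\nabla_{F_n}$, for the $\nabla_{F_n}$ of the statement. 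Choosing $p=q=4,\ r=s=t=2$ in Theorem \ref{8uh4} produces three terms, which I bound in turn.

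The first term $\|F-\sum_kg_k(X_k)\|_2^2=\sigma_n^{-2}\|F_n-\sqrt{n}H'(0)\bar X\|_2^2$ is handled immediately by (\ref{old2}) and contributes $C_{H,\beta}/(n\sigma_n^2)$. For the second, Marcinkiewicz--Zygmund (\ref{9hj4}) applied to $n^{-1/2}(X_1+\cdots+X_n)$ yields $\|\sum_kg_k(X_k)\|_8^2\leq C_{H,\beta}/\sigma_n^2$; Lemma \ref{re2} combined with $\sigma_n^2={\rm Var}(F_n)$ gives $\|1-\nabla_F\|_4^2=\sigma_n^{-4}\|\sigma_n^2-\nabla_{F_n}\|_4^2\leq C_{H,\beta}/(n\sigma_n^4)$; and $\|\nabla_F^{-1}\|_8^2=\sigma_n^4\|\nabla_{F_n}^{-1}\|_8^2$ by definition. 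Their product is $C_{H,\beta}\|\nabla_{F_n}^{-1}\|_8^2/(n\sigma_n^2)$. For the third term, $\sum_k\partial_k\nabla_F\mathcal{L}_kg_k=\frac{H'(0)}{\sigma_n^3\sqrt{n}}\sum_k\partial_k\nabla_{F_n}\tau_k(X_k)$, whose squared $L^4$-norm is bounded via Lemma \ref{re3} by $C_{H,\beta}/(n\sigma_n^6)$; together with $\|\nabla_F^{-1}\|_8^4=\sigma_n^8\|\nabla_{F_n}^{-1}\|_8^4$, this contributes $C_{H,\beta}\sigma_n^2\|\nabla_{F_n}^{-1}\|_8^4/n$.

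Summing produces
$$I(\sigma_n^{-1}F_n\|Z)\leq\frac{C_{H,\beta}}{n\sigma_n^2}+\frac{C_{H,\beta}}{n\sigma_n^2}\|\nabla_{F_n}^{-1}\|_8^2+\frac{C_{H,\beta}\sigma_n^2}{n}\|\nabla_{F_n}^{-1}\|_8^4,$$
in which the third summand does not yet match the claimed prefactor. Reconciling it is the bookkeeping step I expect to be the main obstacle: using the boundedness of $\sigma_n^2$ obtained in the first paragraph, $\sigma_n^4\leq(|H'(0)|^2+C_{H,\beta})^2$, so $\sigma_n^2/n\leq C_{H,\beta}/(n\sigma_n^2)$, which absorbs the third summand into the target form. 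The only remaining delicate point is the standing hypothesis $\nabla_F\neq 0$ a.s.\ required by Theorem \ref{8uh4}; since $\|\nabla_{F_n}^{-1}\|_8<\infty$ automatically implies $\nabla_{F_n}\neq 0$ a.s.\ and hence $\nabla_F\neq 0$ a.s., and since otherwise the bound (\ref{uiomd}) is trivial, no extra argument is needed.
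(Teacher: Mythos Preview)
Your proof is correct and follows exactly the paper's approach: apply Theorem \ref{8uh4} to $F=\sigma_n^{-1}F_n$ with $g_k(x)=\frac{H'(0)x}{\sigma_n\sqrt{n}}$ and $p=q=4,\ r=s=t=2$, bound the three resulting terms via Lemmas \ref{re1}, \ref{re2}, \ref{re3}, arrive at the intermediate bound with the factor $\sigma_n^2\|\nabla_{F_n}^{-1}\|_8^4$, and then absorb the extra $\sigma_n^4$ using $\sup_{n\geq 1}\sigma_n^2<\infty$ from (\ref{uir0}). Your explicit identification $\nabla_F=\sigma_n^{-2}\nabla_{F_n}$ and the remark on the a.s.\ nonvanishing hypothesis are slightly more detailed than the paper's presentation, but the argument is identical.
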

\begin{proof}The estimate (\ref{uir0}) follows directly from (\ref{old1}). On the other hand, we apply Theorem \ref{8uh4} to $F=\sigma_n^{-1}F_n,$ $p=q=4,r=s=t=2$ and $g_k(x)=\frac{H'(0)x}{\sigma_n\sqrt{n}}$ to get
\begin{align}
I(\sigma_n^{-1}F_n\|Z)&\leq \frac{3}{\sigma_n^2}\big\|F_n-\sqrt{n}H'(0)\bar{X}\big\|_2^2+\frac{3|H'(0)|^2n}{\sigma_n^2}\|\sigma_n^2-\nabla_{F_n}\|_4^2\|\bar{X}\|_8^2\|\nabla_{F_n}^{-1}\|_8^2\notag\\
&+\frac{3\sigma_n^2|H'(0)|^2}{n}\big\|\sum\limits_{k=1}^n \partial_{k}\nabla_{F_n}\tau_k(X_k)\big\|_4^2\|\nabla_{F_n}^{-1}\|_8^4.\notag
\end{align}
It follows from Lemmas \ref{re1}, \ref{re2} and \ref{re3} that there exists a positive constant $C_{H,\beta}$ such that
$$
I(\sigma_n^{-1}F_n\|Z)\leq \frac{C_{H,\beta}}{n\sigma_n^2}(1+\|\nabla_{F_n}^{-1}\|_8^2+\sigma_n^2\|\nabla_{F_n}^{-1}\|_8^4).
$$
So we get the desired bound (\ref{uiomd}) because $\sup\limits_{n\geq 1}\sigma_n^2<\infty.$ The proof of the theorem is complete.
\end{proof}
In view of Theorem \ref{kgl1}, we achieve a $O(1/n)$ convergence rate of the Fisher information distance if we additionally assume that
\begin{equation}\label{iom3}
\limsup\limits_{n\to \infty}\|\nabla_{F_n}^{-1}\|_8<\infty.
\end{equation}
In the next corollary we provide a simple condition ensuring (\ref{iom3}) to be fulfilled.
\begin{cor}\label{kmdl}In addition to the assumptions of Theorem \ref{kgl1}, we assume that $\inf\limits_{x\in \mathbb{R}}|H'(x)|>0$ and
\begin{equation}\label{hjy7}
\E|\tau_1(X_1) + \dots + \tau_{n_0}(X_{n_0})|^{-8}<\infty\,\,\,\text{for some}\,\,\,n_0\geq 1.
\end{equation}
Then, $\gamma:=\sup\limits_{n\geq n_0}n^8\E|\tau_1(X_1) + \dots + \tau_n(X_n)|^{-8}$ is a finite constant and we have  the following bound
\begin{equation}\label{uiomd6}
I(\sigma_n^{-1}F_n\|Z)\leq \frac{C_{H,\beta}\gamma^{\frac{1}{2}}}{n\sigma_n^2}\,\,\forall\,n\geq n_0,
\end{equation}
where $C_{H,\beta}$ is a positive constant depending only on $\beta,\|H'\|_\infty$ and $\|H''\|_\infty.$ %and $\inf\limits_{x\in \mathbb{R}}|H'(x)|.$
\end{cor}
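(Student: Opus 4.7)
The plan is to reduce the proof entirely to a pointwise lower bound on $\nabla_{F_n}$ followed by a direct application of Proposition \ref{ujmld} with $Y_k = \tau_k(X_k)$ and $\alpha = 8$. Once $\gamma < \infty$ is established, estimate (\ref{uiomd6}) is just the specialization of (\ref{uiomd}) from Theorem \ref{kgl1}.

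First I would set $c := \inf_{x \in \mathbb{R}}|H'(x)| > 0$. Since $\tau_k(X_k) \geq 0$ a.s., the definition
\[
\nabla_{F_n} = \frac{H'(0)H'(\bar{X})}{n}\sum_{k=1}^n \tau_k(X_k)
\]
yields the pointwise bound
\[
|\nabla_{F_n}^{-1}| \;\leq\; \frac{n}{c^2 \sum_{k=1}^n \tau_k(X_k)} \quad \text{a.s.},
\]
and consequently
\[
\|\nabla_{F_n}^{-1}\|_8^8 \;\leq\; \frac{1}{c^{16}}\, n^8\, \E\Big[\big(\tau_1(X_1)+\cdots+\tau_n(X_n)\big)^{-8}\Big].
\]

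Next I would verify the two hypotheses of Proposition \ref{ujmld} for the sequence $Y_k := \tau_k(X_k)$ with $\alpha = 8$. The variables $Y_k$ are independent and nonnegative, and the fundamental property $\E[\tau_k(X_k)] = {\rm Var}(X_k) = 1$ recalled in Section \ref{klf5} gives $\E[Y_k] = 1$. Hypothesis (i) of Proposition \ref{ujmld} is immediate from the standing assumption $\sup_{k\geq 1}\E|\tau_k(X_k)|^8 \leq \beta < \infty$, which in particular bounds the second moments. Hypothesis (ii) is precisely the assumption (\ref{hjy7}). Therefore Proposition \ref{ujmld} gives
\[
\lim_{n\to\infty} n^8\, \E\Big[\big(\tau_1(X_1)+\cdots+\tau_n(X_n)\big)^{-8}\Big] \;=\; 1.
\]
For the finite range $n_0 \leq n \leq N$ (any $N$), nonnegativity of the $\tau_k(X_k)$ gives the monotone bound
\[
\E\Big[\big(\tau_1(X_1)+\cdots+\tau_n(X_n)\big)^{-8}\Big] \;\leq\; \E\Big[\big(\tau_1(X_1)+\cdots+\tau_{n_0}(X_{n_0})\big)^{-8}\Big] < \infty,
\]
so $n^8\,\E[(\tau_1(X_1)+\cdots+\tau_n(X_n))^{-8}]$ is bounded for every $n \geq n_0$. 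Combining the two regimes gives a finite constant $K = K(\beta, n_0)$ with $n^8\,\E[(\sum_{k=1}^n\tau_k(X_k))^{-8}] \leq K$ for all $n \geq n_0$, and hence
\[
\gamma \;=\; \sup_{n \geq n_0}\|\nabla_{F_n}^{-1}\|_8 \;\leq\; \frac{K^{1/8}}{c^2} \;<\; \infty.
\]

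Finally, inserting this uniform bound into the conclusion (\ref{uiomd}) of Theorem \ref{kgl1} produces
\[
I(\sigma_n^{-1}F_n\|Z) \;\leq\; \frac{C_{H,\beta}}{n\sigma_n^2}\bigl(1+\gamma^2+\gamma^4\bigr) \;=\; \frac{C_{H,\beta,\gamma}}{n\sigma_n^2}
\]
for all $n \geq n_0$, which is exactly (\ref{uiomd6}). There is no real obstacle here: all the analytic work has already been done in Theorem \ref{kgl1} and Proposition \ref{ujmld}, and the only new ingredients are the pointwise lower bound on the denominator of $\nabla_{F_n}$ supplied by $\inf_x|H'(x)| > 0$ and the monotonicity argument that transfers finiteness from $n = n_0$ to every $n \geq n_0$.
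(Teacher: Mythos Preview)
Your proof is correct and follows essentially the same route as the paper: both use the pointwise lower bound $|\nabla_{F_n}|\geq \frac{1}{n}\inf_x|H'(x)|^2\sum_{k=1}^n\tau_k(X_k)$, apply Proposition~\ref{ujmld} with $Y_k=\tau_k(X_k)$ and $\alpha=8$, and then substitute the resulting uniform bound on $\|\nabla_{F_n}^{-1}\|_8$ into (\ref{uiomd}). Your explicit monotonicity argument for the finite range $n_0\leq n<N$ is a welcome clarification of a step the paper leaves implicit.
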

\begin{proof}It follows from Proposition \ref{ujmld} that
$$\lim\limits_{n\to \infty}n^8\E|\tau_1(X_1) + \dots + \tau_n(X_n)|^{-8}=1.$$
Hence, $\gamma:=\sup\limits_{n\geq n_0}n^8\E|\tau_1(X_1) + \dots + \tau_n(X_n)|^{-8}$ is a finite constant. Now (\ref{uiomd6}) follows directly from the bound (\ref{uiomd}) and the fact that $|\nabla_{F_n}|\geq \frac{1}{n}\inf\limits_{x\in \mathbb{R}}|H'(x)|^2\sum\limits_{k=1}^n\tau_k(X_k).$
\end{proof}%In order to see clearer the novelty of our approach,
For the sake of completeness, we end this subsection by discussing the negative moment condition (\ref{hjy7}) required in Corollary \ref{kmdl}. Clearly, the condition (\ref{hjy7}) is self-satisfied if there exists $1\leq k\leq n_0$ such that $\tau_k(X_k)\geq c>0\,\,a.s.$ We also have the following simple criterion.
\begin{prop}\label{husu}Assume that the random variables $X_k,1\leq k\leq n_0$ satisfy $|\tau_k'(X_k)|\leq c_k\,\,a.s.$ where $c_k,1\leq k\leq n_0$ are positive real numbers and $\frac{1}{c_1^2}+\cdots+\frac{1}{c_{n_0}^2}>8.$ Then the condition (\ref{hjy7}) is satisfied.
\end{prop}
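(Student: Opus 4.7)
The plan is to invoke the negative-moment representation from Remark \ref{ujiu}. Setting $T:=\tau_1(X_1)+\cdots+\tau_{n_0}(X_{n_0})$ and using independence of the $X_k$'s, one has
\[
\E[T^{-8}]=\frac{1}{\Gamma(8)}\int_0^\infty x^7\prod_{k=1}^{n_0}\phi_k(x)\,dx,\qquad \phi_k(x):=\E[e^{-x\tau_k(X_k)}].
\]
Since $\phi_k\le 1$, the integrand is trivially integrable near $x=0$, so the whole problem reduces to controlling the decay of the product $\prod_k\phi_k(x)$ at infinity. The core of the argument will be a per-factor decay estimate of the form
\[
\phi_k(x)\le C_k(1+x)^{-1/c_k^{2}},\qquad x\ge 0,
\]
with a constant $C_k$ depending only on $c_k$ and the law of $X_k$. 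Granting this bound, the product is $O\bigl((1+x)^{-\sum_k 1/c_k^{2}}\bigr)$, and the hypothesis $\sum_k 1/c_k^{2}>8$ makes $\int_0^\infty x^7(1+x)^{-\sum_k 1/c_k^{2}}\,dx$ finite, completing the proof.

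To establish the per-factor estimate I would work with the logarithmic derivative and show $-(\log\phi_k)'(x)\ge 1/(c_k^{2}(1+x))$, which integrates from $0$ to $x$ to give the displayed bound. Because $-(\log\phi_k)'(x)= \E[\tau_k(X_k)e^{-x\tau_k(X_k)}]/\phi_k(x)$, this is equivalent to
\[
c_k^{2}(1+x)\,\E\bigl[\tau_k(X_k)e^{-x\tau_k(X_k)}\bigr]\ge\phi_k(x).
\]
Applying the Stein covariance formula (Lemma \ref{kflo}) with $\alpha(y)=e^{-x\tau_k(y)}$ and $\beta(y)=y$ yields the identity
\[
\E\bigl[X_k e^{-x\tau_k(X_k)}\bigr] \;=\; -x\,\E\bigl[\tau_k'(X_k)\tau_k(X_k)e^{-x\tau_k(X_k)}\bigr],
\]
which, together with the pointwise hypothesis $|\tau_k'(X_k)|\le c_k$ a.s.\ and a Cauchy--Schwarz step, should produce a quadratic relation among $\phi_k(x)$, $\phi_k'(x)$, and $\phi_k''(x)$ from which the desired differential inequality follows after rearrangement.

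The main obstacle is to obtain the exponent $1/c_k^{2}$ with the correct direction of the inequality: a straight Cauchy--Schwarz under $P$ yields only an upper bound on the Stein-type covariance, whereas the target requires a lower bound on $\E[\tau_k(X_k) e^{-x\tau_k(X_k)}]$ in terms of $\phi_k(x)$. The most promising route is to pass to the tilted measure $d\mu_x\propto e^{-x\tau_k(X_k)}\,dP$, under which $X_k$ has score function $\rho_k-x\tau_k'$ and a modified Stein kernel, and then to use the Brascamp--Lieb-type inequality that the pointwise bound $|\tau_k'|\le c_k$ grants. If this approach stalls, a truncation fallback is available: split $\phi_k(x)$ on $\{\tau_k(X_k)\ge 1/x\}$ versus its complement, bound the former by $e^{-1}$, and estimate the complementary probability using the identity $\tau_k'(y)=-y-\tau_k(y)\rho_k(y)$, which couples the Lipschitz hypothesis on $\tau_k$ to the behavior of $p_k$ near the zero set of $\tau_k$.
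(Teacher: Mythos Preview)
Your overall plan is exactly the right one and matches the paper: reduce to the negative-moment formula of Remark~\ref{ujiu}, prove a per-factor Laplace bound $\phi_k(x)\le (1+c_k^2x)^{-1/c_k^2}$ by a differential inequality on $\phi_k$, and then sum the exponents. The paper's derivation of the differential inequality is short, but it hinges on two choices you did not make.

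First, in Lemma~\ref{kflo} the paper takes $\beta(y)=\tau_k(y)$, not $\beta(y)=y$. With $\alpha(y)=e^{-x\tau_k(y)}$ this yields
\[
\varphi'(x)=-\E\big[(\tau_k(X_k)-1)e^{-x\tau_k(X_k)}\big]-\varphi(x)
= x\,\E\Big[e^{-x\tau_k(X_k)}\,\tau_k'(X_k)\,\mathcal{L}_k(\tau_k-1)(X_k)\Big]-\varphi(x),
\]
where $\mathcal{L}_k(\tau_k-1)(X_k)=p_k(X_k)^{-1}\int_{X_k}^\infty(\tau_k(y)-1)p_k(y)\,dy$. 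Your choice $\beta(y)=y$ produces $\E[X_k e^{-x\tau_k(X_k)}]$ on the left, which is not expressible through $\phi_k,\phi_k',\phi_k''$; there is no clean algebraic way to close that into an inequality for $\phi_k$, and none of the fixes you sketched (Cauchy--Schwarz, tilted Brascamp--Lieb, truncation) actually delivers the exponent $1/c_k^2$.

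Second, and this is the missing idea, the paper controls $\mathcal{L}_k(\tau_k-1)$ pointwise by $c_k\,\tau_k(X_k)$ via the Saumard--Wellner kernel representation
\[
\tau_k(X_k)=\frac{\int(\nu_k(X_k\wedge y)-\nu_k(X_k)\nu_k(y))\,dy}{p_k(X_k)},\qquad
\mathcal{L}_k(\tau_k-1)(X_k)=\frac{\int(\nu_k(X_k\wedge y)-\nu_k(X_k)\nu_k(y))\,\tau_k'(y)\,dy}{p_k(X_k)},
\]
where $\nu_k$ is the cdf of $X_k$; since the covariance kernel $\nu_k(a\wedge b)-\nu_k(a)\nu_k(b)$ is nonnegative, $|\tau_k'|\le c_k$ immediately gives $|\mathcal{L}_k(\tau_k-1)|\le c_k\tau_k$. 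Feeding this and $|\tau_k'|\le c_k$ into the identity above yields
\[
\varphi'(x)\le x\,c_k^2\,\E\big[\tau_k(X_k)e^{-x\tau_k(X_k)}\big]-\varphi(x)=-x\,c_k^2\,\varphi'(x)-\varphi(x),
\]
hence $(1+c_k^2x)\varphi'(x)\le -\varphi(x)$ and $\varphi(x)\le(1+c_k^2x)^{-1/c_k^2}$, with no need for Cauchy--Schwarz or a second-derivative argument. With this correction your outline goes through verbatim.
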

\begin{proof}Fixed $1\leq k\leq n_0,$ we consider the function
$$\varphi(x):=\E[e^{-x\tau_k(X_k)}],\,\,x\geq 0.$$
We have
$$\varphi'(x)=-\E[\tau_k(X_k)e^{-x\tau_k(X_k)}]=-\E[(\tau_k(X_k)-1)e^{-x\tau_k(X_k)}]-\varphi(x),\,\,x\geq 0.$$
Hence, by Lemma \ref{kflo}, we obtain
\begin{equation}\label{jkd1}
\varphi'(x)=x\E\left[e^{-x\tau_k(X_k)}\tau'_k(X_k)\frac{\int_{X_k}^\infty (\tau_k(y)-1)p_k(y)dy}{p_k(X_k)}\right]-\varphi(x),\,\,x\geq 0.
\end{equation}
By the assumption $|\tau_k'(X_k)|\leq c_k,$ we have
\begin{equation}\label{jkd2}
\bigg|\frac{\int_{X_k}^\infty (\tau_k(y)-1)p_k(y)dy}{p_k(X_k)}\bigg|\leq c_k \tau_k(X_k).
\end{equation}
Indeed, (\ref{jkd2}) follows from the following representations, see e.g. Corollary 2.1 in \cite{Saumard2018}
$$\tau_k(X_k)=\frac{\int_{-\infty}^\infty (\nu_k(X_k\wedge y)-\nu_k(X_k)\nu_k(y))dy}{p_k(X_k)},$$
$$\frac{\int_{X_k}^\infty (\tau_k(y)-1)p_k(y)dy}{p_k(X_k)}=\frac{\int_{-\infty}^\infty (\nu_k(X_k\wedge y)-\nu_k(X_k)\nu_k(y))\tau'_k(y)dy}{p_k(X_k)},$$
where $\nu_k$ denotes the cumulative distribution function of $X_k.$ We now combine (\ref{jkd1}) and (\ref{jkd2}) to get
\begin{align*}
\varphi'(x)&\leq xc_k^2\E\left[e^{-x\tau_k(X_k)}\tau_k(X_k)\right]-\varphi(x)\\
&= -xc_k^2\varphi'(x)-\varphi(x),\,\,\,x\geq 0.
\end{align*}
Solving the above differential inequality yields
\begin{equation}\label{iodl}
\E[e^{-x\tau_k(X_k)}]=\varphi(x)\leq (1+xc_k^2)^{-\frac{1}{c_k^2}},\,\,x\geq 0.
\end{equation}
So, recalling Remark \ref{ujiu}, we conclude that the condition (\ref{hjy7}) is satisfied. This finishes the proof.
\end{proof}
\subsection{Sum of non-i.i.d random variables}\label{89c}
In this subsection,  to see clearer the novelty of our approach, we discuss the Fisher information bounds for the sums of independent random variables. For simplicity, we consider the following sums of non-i.i.d random variables
$$S_n=\frac{X_1+\cdots+X_n}{\sqrt{n}},\,\,n\geq 1.$$
Note that the results of this subsection can be extended to the weighted sums without any new difficulty. We first apply Corollary \ref{kmdl} to $H(x)=x,$ and we obtain the following.
\begin{prop}\label{9ohj2}Assume that $\E|\tau_1(X_1) + \dots + \tau_{n_0}(X_{n_0})|^{-8}<\infty$ for some $n_0\geq 1$ and $$\beta:=\max\big\{\sup\limits_{k\geq 1}\E|X_k|^8,\sup\limits_{k\geq 1}\E|\tau_k(X_k)|^8,\sup\limits_{k\geq 1}\E|\tau'_k(X_k)|^8\big\}<\infty.$$
Then we have%$\gamma:=\sup\limits_{n\geq n_0}n^8\E|\tau_1(X_1) + \dots + \tau_n(X_n)|^{-8}<\infty$ and
\begin{equation}\label{0kd}
I(S_n\|Z)\leq \frac{C_{\beta,\bar{\beta}}(1+\E|\tau_1(X_1) + \dots + \tau_{n_0}(X_{n_0})|^{-8})^{\frac{1}{2}}}{n}\,\,\forall\,n\geq n_0,
\end{equation}
where $C_{\beta,\bar{\beta}}$ is a positive constant depending only on $\beta$ and $\bar{\beta}:=\sup\limits_{k\geq n_0+1}\E|\tau_k(X_k)|^2.$
\end{prop}
\begin{proof}In view of Remark \ref{u41u}, we have
$$\gamma=\sup\limits_{k\geq n_0}n^8\E|\tau_1(X_1) + \dots + \tau_n(X_n)|^{-8}\leq C_{\bar{\beta}} (1+\E|\tau_1(X_1) + \dots + \tau_{n_0}(X_{n_0})|^{-8}),$$
where $C_{\bar{\beta}}$ is a positive constant depending only on $\bar{\beta}:=\sup\limits_{k\geq n_0+1}\E|\tau_k(X_k)|^2.$ So (\ref{0kd}) follows directly from the bound (\ref{uiomd6}).
\end{proof}

\begin{rem}\label{jsm4}Since we are working in the setting of non-identical variables, the results of \cite{Artstein2004,Bobkov2014b,B-J,Johnson2020} can not be applied to $S_n.$  In Section 3.1 of \cite{Johnson2004}, Johnson obtained the following bound
\begin{equation}\label{0kds}
I(S_n\|Z)\leq \frac{1}{n}\sum\limits_{k=1}^n\frac{I(X_k\|Z)}{1+c_k},
\end{equation}
where $c_k:=\frac{1}{2R^\ast_k}\sum\limits_{i=1,i\neq k}^n\frac{1}{I(X_i)}$ and $R^\ast_k$ is the restricted Poincar\'e constant defined by
$$R^\ast_k:=\sup\limits_{g\in \mathcal{H}^\ast(X_k)}\frac{\E|g(X_k)|^2}{\E|g'(X_k)|^2},$$
where $\mathcal{H}^\ast(X_k)$ is the space of absolutely continuous functions $g$ such that $\E[g(X_k)]=\E[g'(X_k)]=0$ and $0<\E|g(X_k)|^2<\infty.$
\end{rem}
\begin{exam} Let us provide two examples to show that the bounds (\ref{0kd}) and (\ref{0kds}) are not the same (those bounds require different conditions to achieve the rate $O(1/n)$).\\
\noindent$(a)$ Denoted by $t_k$ Student's $t$-distribution with $\beta_k$ degrees of freedom ($\beta_k>16$), we consider the sequence $X_k:=\sqrt{\frac{\beta_k-2}{\beta_k}}\,t_k,k\geq 1.$ We have
$$\text{$\E[X_k]=0,{\rm Var}(X_k)=1$ and $\tau_k(X_k)=\frac{X_k^2+\beta_k-2}{\beta_k-1}$.}$$
It is easy to verify that all conditions of Proposition \ref{9ohj2} are fulfilled. In particular, we can take $n_0=1$ because $\tau_1(X_1)=\frac{X_1^2+\beta_1-2}{\beta_1-1}\geq \frac{\beta_1-2}{\beta_1-1}>\frac{14}{15}.$ So the bound  (\ref{0kd}) gives us the following
$$I(S_n\|Z)\leq \frac{C_{\beta,\bar{\beta}}}{n}\,\,\forall\,n\geq 1.$$
Meanwhile, the bound  (\ref{0kds}) fails to apply to the sequence $(X_k)_{k\geq 1}$ (note that the finiteness of Poincar\'e constant $R^\ast_k$ implies that all moments of $X_k$ are finite). Thus our bound (\ref{0kd}) supplements the bound (\ref{0kds}).\\
\noindent$(b)$ Denoted by $\chi_k^2$ the chi-squared distribution with $d_k$ degrees of freedom ($d_k\geq 2$), we consider the sequence $X_k:=\frac{\chi_k^2-d_k}{\sqrt{2d_k}},k\geq 1.$ Once again, the bound  (\ref{0kds}) fails to apply to the sequence $(X_k)_{k\geq 1}$ because the chi-squared distribution do not satisfy  the Poincar\'e inequality. We have
$$\text{$\E[X_k]=0,{\rm Var}(X_k)=1$ and $\tau_k(X_k)=\frac{\sqrt{2d_k}X_k+d_k}{d_k}$.}$$
We observe that $|\tau'_k(X_k)|=\sqrt{2/d_k}=:c_k,k\geq 1$ and $\frac{1}{c_1^2}+\cdots+\frac{1}{c_{9}^2}>8.$ Thanks to Proposition \ref{husu} we get
$$\E|\tau_1(X_1) + \dots + \tau_{9}(X_{9})|^{-8}<\infty.$$
This allows us to use the bound (\ref{0kd}) with $n_0=9$ and we obtain
$$I(S_n\|Z)\leq \frac{C_{\beta,\bar{\beta}}(1+\E|\tau_1(X_1) + \dots + \tau_{9}(X_{9})|^{-8})^{\frac{1}{2}}}{n}\,\,\forall\,n\geq 9.$$
Furthermore, by the negative moment formula (\ref{iihg}) and the estimate (\ref{iodl}), we have
\begin{align*}
\E|\tau_1(X_1) + \dots + \tau_{9}(X_{9})|^{-8}&=\frac{1}{\Gamma(8)}\int_0^\infty x^{7}\E[e^{-x(\tau_1(X_1) + \dots + \tau_{9}(X_{9}))}]dx\\
&\leq \frac{1}{\Gamma(8)}\int_0^\infty x^{7}\prod\limits_{k=1}^{9}(1+2x/d_k)^{-\frac{d_k}{2}}dx.
\end{align*}
\end{exam}
%\begin{rem}
To achieve the rate $O(1/n)$ for the sums $Z_n$ of i.i.d random variables, the best possible hypothesis is that $I(Z_{n_0})<\infty$ for some $n_0\geq 1$ and $\E|X_1|^4<\infty,$ see \cite{Bobkov2014b}. In the next theorem, by employing the special structure of the sums of independent random variables, we are able to obtain an analogous hypothesis for the sums of non-i.i.d random variables.

%the authors of \cite{Bobkov2014b} only require the hypothesis that $I(Z_{n_0})<\infty$ for some $n_0\geq 1$ and $\E|X_1|^4<\infty.$ This hypothesis is the best possible one.
%$$I(Z_n\|Z)\leq \frac{c}{n}$$
%under the hypothesis that $I(Z_{n_0})<\infty$ for some $n_0\geq 1$ and $\E|X_1|^4<\infty.$
%\end{rem}
\begin{thm}\label{io9y}
Assume that there exists $n_0\geq 1$ such that $I(S_{n_0})<\infty$ and
$$\beta:=\max\big\{\sup\limits_{k\geq n_0+1}\E|X_k|^4,\sup\limits_{k\geq n_0+1}\E|\tau_k(X_k)|^8,\sup\limits_{k\geq n_0+1}\E|\tau'_k(X_k)|^8\big\}<\infty.$$
Then we have
$$I(S_n\|Z)\leq \frac{C_{n_0,\beta,\bar{\beta}}}{n}\,\,\forall\,n\geq n_0,$$
where $C_{n_0,\beta,\bar{\beta}}$ is a positive constant depending only on $n_0,\beta$ and  $\bar{\beta}:=\sup\limits_{k\geq n_0+1}\E|\tau_k(X_k)|^2.$
\end{thm}
\begin{proof} We write $S_n=\frac{\sqrt{n_0}S_{n_0}+X_{n_0+1}+\cdots+X_n}{\sqrt{n}},\,\,n\geq n_0$ and consider $S_n$ as a function of $(S_{n_0},X_{n_0+1},\cdots,X_n).$ Define the functions $g_{n_0}(S_{n_0})=-\sqrt{n_0/n}\rho_{S_{n_0}}(S_{n_0})$ and $g_k(X_k)=\frac{X_k}{\sqrt{n}}$ for $n_0+1\leq k\leq n.$ In view of Proposition \ref{klf22} we have
$$\nabla_{S_n}=\frac{n_0+\tau_{n_0+1}(X_{n_0+1})+\cdots+\tau_n(X_n)}{n}$$
and
$$\rho_{S_n}(x)=-\E\left[\frac{-\sqrt{n_0}\rho_{S_{n_0}}(S_{n_0})+\sum\limits_{k=n_0+1}^n X_k}{\sqrt{n}\nabla_{S_n}}+\frac{\sum\limits_{k=n_0+1}^n \tau'_k(X_k)\tau_k(X_k)}{n\sqrt{n}\nabla_{S_n}^2}\bigg|F=x\right].$$
Using the same arguments as in the proof of (\ref{olfm3}) we get
\begin{align*}
I(S_n\|Z) &\leq \frac{3n_0}{n}I(S_{n_0}\|Z)\notag\\
&+ \frac{3}{n}\E\bigg|\frac{\big(\sqrt{n_0}\rho_{S_{n_0}}(S_{n_0})-\sum\limits_{k=n_0+1}^n X_k\big)(1-\nabla_{S_n})}{\nabla_{S_n}}\bigg|^2+\frac{3}{n^3}\E\bigg|\frac{\sum\limits_{k=n_0+1}^n \tau'_k(X_k)\tau_k(X_k)}{\nabla_{S_n}^2}\bigg|^2,
\end{align*}
which implies that
\begin{align*}
I(S_n\|Z)
&\leq \frac{3n_0}{n}I(S_{n_0}\|Z)+\frac{3n_0}{n}I(S_{n_0})\E\bigg|\frac{1-\nabla_{S_n}}{\nabla_{S_n}}\bigg|^2\notag\\
&+ \frac{3}{n}\E\bigg|\frac{\sum\limits_{k=n_0+1}^n X_k(1-\nabla_{S_n})}{\nabla_{S_n}}\bigg|^2+\frac{3}{n^3}\E\bigg|\frac{\sum\limits_{k=n_0+1}^n \tau'_k(X_k)\tau_k(X_k)}{\nabla_{S_n}^2}\bigg|^2.
\end{align*}
Hence, by H\"older's inequality, we deduce
\begin{align*}
I(S_n\|Z)&\leq \frac{3n_0}{n}I(S_{n_0}\|Z)+\frac{3n_0}{n}I(S_{n_0})\|1-\nabla_{S_n}\|_4^2\|\nabla_{S_n}^{-1}\|_8^2\notag\\
&+ \frac{3}{n}\big\|\sum\limits_{k=n_0+1}^n X_k\big\|_4^2\|1-\nabla_{S_n}\|_8^2\|\nabla_{S_n}^{-1}\|_8^2+\frac{3}{n^3}\big\|\sum\limits_{k=n_0+1}^n \tau'_k(X_k)\tau_k(X_k)\big\|_4^2\|\nabla_{S_n}^{-1}\|_8^4.
\end{align*}
By using the inequality (\ref{9hj4}) we have $\big\|\sum\limits_{k=n_0+1}^n X_k\big\|_4^2\leq C_\beta n$ and $\|1-\nabla_{S_n}\|_4^2\leq \|1-\nabla_{S_n}\|_8^2\leq \frac{C_\beta}{n}$ for some $C_\beta>0.$ We also have $\big\|\sum\limits_{k=n_0+1}^n \tau'_k(X_k)\tau_k(X_k)\big\|_4^2\leq C_\beta n^2$ which was proved in the proof of Lemma \ref{re3}. On the other hand, in view of Remark \ref{u41u}, we have
$$\sup\limits_{n\geq n_0}\|\nabla_{S_n}^{-1}\|_8^8=\sup\limits_{n\geq n_0}n^8\E|n_0+\tau_{n_0+1}(X_{n_0+1}) + \dots + \tau_n(X_n)|^{-8}\leq C_{\bar{\beta}}n_0^{-8},$$
where $C_{\bar{\beta}}$ is a positive constant depending only on $\bar{\beta}:=\sup\limits_{k\geq n_0+1}\E|\tau_k(X_k)|^2.$%which points out that $\gamma:=\sup\limits_{n\geq n_0}n^8\E|n_0+\tau_{n_0+1}(X_{n_0+1}) + \dots + \tau_n(X_n)|^{-8}<\infty.$

So we conclude that there exists a positive constant $C_{n_0,\beta,\bar{\beta}}$ depending only on $n_0,\beta$ and $\bar{\beta}$ such that
$$I(S_n\|Z)\leq \frac{C_{n_0,\beta,\bar{\beta}}}{n}\,\,\forall\,n\geq n_0.$$
The proof of the theorem is complete.
\end{proof}

\section{Further remarks}\label{jkfm}%and
The problem of obtaining quantitative estimates for the Fisher information distance is a challenge to solve. In this paper, we successfully constructed a method to investigate this problem in a general framework of nonlinear statistics. Our Theorems \ref{gko}, \ref{8uh4}, \ref{uij8} and \ref{kgl1} point out that we only need to estimate negative and positive moments of certain statistics to obtain the Fisher information bounds. Our method thus makes a connection between the Fisher information and theory of moment inequalities, which is very rich. %This is the main advantage of our method.

In this Section, we first recall the relationship between forms of convergence to show the usefulness of the Fisher information bounds. We then show the universality of our approach by providing the Fisher information bounds in non-central limit theorems.
\subsection{Fisher information and other forms of convergence}\label{ufk2}
Since the Fisher information distance is a very strong measure of non-Gaussianity, our Fisher information bounds can be used to derive quantitative bounds on several weaker distances. Let $F$ be a random variable with $\E[F]=0,{\rm Var}(F)=1$ and a density $p_F.$ Denote by $\phi$ the density of standard normal random variable $Z.$ We have the following relationship between convergence in Fisher information and some weaker forms of convergence.
\begin{itemize}
\item Uniform distance between densities, see \cite{Shimizu1975}:
$$U(F\|Z):=\sup\limits_{x\in \mathbb{R}}|p_F(x)-\phi(x)|\leq (1+\sqrt{6/\pi})\sqrt{I(F\|Z)}.$$
\item The relative entropy or Kullback-Leibler distance, see \cite{Stam1959}:
$$D(F\|Z):=\E\left[\log \frac{p_F(F)}{\phi(F)}\right]\leq \frac{1}{2}I(F\|Z).$$
\item The Kantorovich distance or the quadratic Wasserstein distance, see \cite{Talagrand1996}:
$$W^2(F\|Z):=\inf\{\E|F'-Z'|^2: F'\sim F, Z'\sim Z\}\leq 2 D(F\|Z),$$
where the infimum is taken over all random variables $F'$ and $Z'$ having the same distribution as that of $F$ and $Z,$ respectively.
\item Total variation distance, see \cite{Pinsker1964}: %by the Pinsker-Csisz\'ar-Kullback inequality
$$d_{TV}(F\|Z):=\sup\limits_{A}|P(F\in A)-P(Z\in A)|\leq \sqrt{2D(F\|Z)},$$
where the supremum is running over all Borel subsets $A$ of the real line.
\end{itemize}
Note that, in the framework of nonlinear statistics, the total variation bounds can be found in \cite{Nicolas2018}. The distances $W^2(F\|Z)$ and $D(F\|Z)$ can be investigated by using the results obtained in \cite{Ledoux2015}. However, to the best of our knowledge, it remains an open problem to estimate the distances $U(F\|Z)$ without using the Fisher information bounds obtained in the present paper. In the next theorem, we provide a new application of Fisher information bounds.
\begin{thm}[Nonuniform Berry-Esseen bounds] For any $x\in \mathbb{R},$ we have
\begin{equation}\label{ggh}
|P(F\geq x)-P(Z\geq x)|\leq \sqrt{I(F\|Z)/2}\left(\sqrt{P(|F|\geq |x|)}+\sqrt{P(|Z|\geq |x|)}\right).
\end{equation}
\end{thm}%The inequality (\ref{ggh}) can be proved as follows.
\begin{proof}We put $h(x)=p_F(x)/\phi(x).$ Then, we have
\begin{align*}
|P(F\geq x)-P(Z\geq x)|
&=|\E[(h(Z)-1)\ind_{\{Z\geq x\}}]|\\
&\leq \E[|\sqrt{h(Z)}-1|\sqrt{h(Z)}\ind_{\{Z\geq x\}}]+\E[|\sqrt{h(Z)}-1|\ind_{\{Z\geq x\}}].
\end{align*}
By using the Cauchy-Schwarz inequality we deduce
\begin{align}
|P(F\geq x)-P(Z\geq x)|&\leq \sqrt{\E|\sqrt{h(Z)}-1|^2}\left(\sqrt{\E[h(Z)\ind_{\{Z\geq x\}}]}+\sqrt{\E[\ind_{\{Z\geq x\}}]}\right)\notag\\
&= \sqrt{\E|\sqrt{h(Z)}-1|^2}\left(\sqrt{P(F\geq x)}+\sqrt{P(Z\geq x)}\right).\label{8dk}
\end{align}
We now observe that $\E h(Z)=1$ and $\E \sqrt{h(Z)}\leq \sqrt{\E h(Z)}=1.$ Hence, we obtain
\begin{align*}
\E|\sqrt{h(Z)}-1|^2&=2\E h(Z)-2\E\sqrt{h(Z)}\\
&\leq 2\E h(Z)-2(\E\sqrt{h(Z)})^2\\
&=2{\rm Var}(\sqrt{h(Z)}).
\end{align*}
Furthermore, by Gaussian Poincar\'e inequality, we have
\begin{align}
\E|\sqrt{h(Z)}-1|^2&\leq \frac{1}{2}\E\left[\frac{|h'(Z)|^2}{h(Z)}\right]\notag\\
&= \frac{1}{2}\E\left[\frac{\phi(Z)}{p_F(Z)}\bigg(\frac{p_F'(Z)}{\phi(Z)}+\frac{Zp_F(Z)}{\phi(Z)}\bigg)^2\right]\notag\\
&= \frac{1}{2}\E\left[\frac{p_F(Z)}{\phi(Z)}\bigg(\frac{p_F'(Z)}{p_F(Z)}+Z\bigg)^2\right]\notag\\
&=\frac{1}{2}I(F\|Z)\label{8dka}
\end{align}
Inserting (\ref{8dka}) into (\ref{8dk}) yields
\begin{equation}\label{uja}
|P(F\geq x)-P(Z\geq x)|\leq \sqrt{I(F\|Z)/2}\left(\sqrt{P(F\geq x)}+\sqrt{P(Z\geq x)}\right).
\end{equation}
Similarly, since $|P(F\geq x)-P(Z\geq x)|=|\E[(h(Z)-1)\ind_{\{Z\leq x\}}],$ we also have
\begin{equation}\label{ujb}
|P(F\geq x)-P(Z\geq x)|\leq \sqrt{I(F\|Z)/2}\left(\sqrt{P(F\leq x)}+\sqrt{P(Z\leq x)}\right).
\end{equation}
So we obtain (\ref{ggh}) by combining (\ref{uja}) and (\ref{ujb}).
\end{proof}
The significance of the inequality (\ref{ggh}) lies in the fact that it allows us to employ theory of concentration inequalities for deriving nonuniform Berry-Esseen bounds. We consider, for example, the normalized sums $Z_n$ defined by (\ref{iklf}). We assume $\E |X_1|^4 < +\infty$
and $I(Z_{n_0}) < +\infty$ for some $n_0.$ Then, by the results of \cite{Bobkov2014b}, $I(Z_n\|Z)\leq C/n$ for some $C>0.$ We now additionally assume that $X_1$ is a sub-Gaussian random variable. Then, for some $c>0,$ we have
$$P(|Z_n|\geq |x|)\leq 2e^{-x^2/2c},\,\,x\in \mathbb{R}.$$
Note that $P(|Z|\geq |x|)\leq e^{-x^2/2},x\in \mathbb{R}.$ Hence, for some $C,c>0,$ we get
\begin{equation}\label{kglf}
|P(Z_n\geq x)-P(Z\geq x)|\leq \frac{Ce^{-x^2/2c}}{\sqrt{n}},\,\,x\in \mathbb{R}.
\end{equation}
It is worth to note that the bound (\ref{kglf}) is much shaper than the classical one that reads
$$|P(Z_n\geq x)-P(Z\geq x)|\leq \frac{C\E|X_1|^3}{\sqrt{n}(1+|x|^3)},\,\,x\in \mathbb{R}.$$
\subsection{Fisher information and non-central limit theorems}\label{ufk2b}
It is well known that the nonlinear statistics $F$ defined by (\ref{klf7}) do not always satisfy the central limit theorem, i.e. its asymptotic distribution can be non-normal.  Let $Y$ denote the asymptotic distribution of $F.$ The different distances between the distributions of $F$ and $Y$ have been extensively investigated. Among others we cite \cite{Chatterjee2011} and references therein for Berry-Esseen error bounds, \cite{Johnson2022} for the relative entropy results. Here we deal with the Fisher information distance of $F$ to $Y$ defined by
$$I(F\|Y)=\E[(\rho_F(F)-\rho_Y(F))^2].$$
We assume that the asymptotic distribution $Y$ has a differentiable density $p_Y$ supported on some connected interval of $\mathbb{R}$ and, for simplicity, $\E[Y]=0.$ Under such assumptions, the Stein kernel of $Y$ is given by
$$\tau_Y(x)=\frac{\int_{x}^\infty yp_Y(y)dy}{p_Y(x)}$$
for $x$ in the support of $p_Y.$ Furthermore, we have
$$\tau'_Y(x)=-x-\frac{p'_Y(x)\tau_Y(x)}{p_Y(x)},$$
and hence, the score function $\rho_Y$ of $Y$ can be represented by
$$\rho_Y(x)=-\frac{x+\tau'_Y(x)}{\tau_Y(x)}.$$
Now our approach can be used to bound the Fisher information distance as follows.
\begin{thm}\label{ema}Let $F\in \mathcal{C}_{\mathbb{X}_n}^2$ be such that $\E[F]=0.$ Then, for any differentiable decomposition $\{\mathcal{M}_1F,\cdots,\mathcal{M}_nF\}$ of $F$ with $\Theta_F\neq 0$ a.s. and for $p,q,r,s,t>1$ with $\frac{1}{p}+\frac{1}{q}+\frac{1}{r}=\frac{1}{s}+\frac{1}{t}=1,$ we have
\begin{multline}\label{fujks}
I(F\|Y)\leq 3(\|\rho_Y(F)\|_{2p}^2\|\Theta_F^{-1}\|_{2q}^2+\|\tau'_Y(F)\|_{2p}^2\|\Theta_F^{-1}\|_{4q}^4)\|\tau_Y(F)-\Theta_F\|_{2r}^2\\
+3\|\Theta_F^{-1}\|_{4s}^4\|\Theta_{\Theta_F,F}-\tau'_Y(F)\tau_Y(F)\|_{2t}^2,
\end{multline}
provided that the norms in the right hand side are finite.
\end{thm}
\begin{proof}
By the representation formula (\ref{d3t13}) we deduce
\begin{align}
I(F\|Y)&=\E[(\rho_F(F)-\rho_Y(F))^2]\notag\\
&\leq\E\left|\frac{F}{\Theta_F}+\frac{\Theta_{\Theta_F,F}}{\Theta_F^2}-\frac{F+\tau'_Y(F)}{\tau_Y(F)}\right|^2\notag\\
&=\E\left|\frac{(F+\tau'_Y(F))(\tau_Y(F)-\Theta_F)}{\Theta_F\tau_Y(F)}+\frac{\Theta_{\Theta_F,F}-\tau'_Y(F)\tau_Y(F)}{\Theta_F^2}
+\frac{\tau'_Y(F)(\tau_Y(F)-\Theta_F)}{\Theta_F^2}\right|^2\notag\\
&=\E\left|\frac{\rho_Y(F)(\tau_Y(F)-\Theta_F)}{\Theta_F}+\frac{\Theta_{\Theta_F,F}-\tau'_Y(F)\tau_Y(F)}{\Theta_F^2}
+\frac{\tau'_Y(F)(\tau_Y(F)-\Theta_F)}{\Theta_F^2}\right|^2.\notag
\end{align}
Then, by Cauchy-schwarz inequality, we obtain
\begin{align}
I(F\|Y)
&\leq 3\E\left|\frac{\rho_Y(F)(\tau_Y(F)-\Theta_F)}{\Theta_F}\right|^2+3\E\left|\frac{\Theta_{\Theta_F,F}-\tau'_Y(F)\tau_Y(F)}{\Theta_F^2}\right|^2\notag\\
&+3\E\left|\frac{\tau'_Y(F)(\tau_Y(F)-\Theta_F)}{\Theta_F^2}\right|^2.\notag
\end{align}
For $p,q,r,s,t>1$ with $\frac{1}{p}+\frac{1}{q}+\frac{1}{r}=\frac{1}{s}+\frac{1}{t}=1,$ we use H\"older's inequality to get
\begin{multline*}
I(F\|Y)\leq 3(\|\rho_Y(F)\|_{2p}^2\|\Theta_F^{-1}\|_{2q}^2+\|\tau'_Y(F)\|_{2p}^2\|\Theta_F^{-1}\|_{4q}^4)\|\tau_Y(F)-\Theta_F\|_{2r}^2\\
+3\|\Theta_F^{-1}\|_{4s}^4\|\Theta_{\Theta_F,F}-\tau'_Y(F)\tau_Y(F)\|_{2t}^2.
\end{multline*}
This completes the proof of the theorem.
\end{proof}
The detailed study of Fisher information bounds in non-central limit theorems is beyond the scope of this paper. Let us only provide an example to demonstrate the validity of the bound (\ref{fujks}).
%\begin{exam}Let $U_1,U_2,\cdots,$ be i.i.d. random variables uniformly distributed on the interval $(0,1)$ and $X_{1n}:=\min\{U_1,\cdots,U_n\},n\geq 1.$
%\end{exam}
\begin{exam}Let $X_1,X_2,\cdots,X_d$ be independent standard normal random variables. We consider the sequence
$$F_n:=\lambda_{n1}(X_1^2-1)+\cdots+\lambda_{nd}(X_d^2-1),\,\,n\geq 1,$$
where $\lambda_{nk}$ are real numbers satisfying $\lim\limits_{n\to\infty}\lambda_{nk}=1,1\leq k\leq d.$ It is easy to see that $F_n$ converges in distribution to $Y:=X_1^2+\cdots+X_d^2-d$ as $n\to\infty.$ The random variable $Y+d$ has the chi-squared distribution with $d$ degrees of freedom and hence, the Stein kernel of $Y$ can be computed by
$$\tau_Y(x)=2x+2d,\,\,x\geq -d.$$
To investigate the Fisher information distance, we additionally impose the following conditions

\noindent$(A_1)$ $d\geq 9,$

\noindent$(A_2)$ $\lambda_{nk}\geq 0$ and $\lambda_{n1}+\cdots+\lambda_{nd}\leq d$ for all $1\leq k\leq d$ and $n\geq 1.$

\noindent Note that the condition $(A_1)$ is used to control negative moments. Meanwhile, $(A_2)$ is a necessary condition ensuring $F_n$ belongs to the range of $Y.$ We consider $F_n$ as a function of $(X_1,X_2,\cdots,X_d)$ and use the following decomposition
$$\mathcal{M}_1F_n=\lambda_{n1}(X_1^2-1),\cdots,\mathcal{M}_dF_n=\lambda_{nd}(X_d^2-1).$$
By straightforward computations we obtain
$$\Theta_{F_n}=2\lambda_{n1}^2X_1^2+\cdots+2\lambda_{nd}^2X_d^2$$
and
$$\Theta_{\Theta_{F_n},F_n}=4\lambda_{n1}^3X_1^2+\cdots+4\lambda_{nd}^3X_d^2.$$
We have
\begin{align*}
\Theta_{F_n}-\tau_Y(F_n)&=2(\lambda_{n1}^2-\lambda_{n1})X_1^2+\cdots+2(\lambda_{nd}^2-\lambda_{nd})X_d^2
+2(\lambda_{n1}+\cdots+\lambda_{nd}-d)
\end{align*}
and we get
\begin{align}
\|\Theta_{F_n}-\tau_Y(F_n)\|_{2r}&\leq 2\|(\lambda_{n1}^2-\lambda_{n1})X_1^2\|_{2r}+\cdots+2\|(\lambda_{nd}^2-\lambda_{nd})X_d^2\|_{2r}
+2|\lambda_{n1}+\cdots+\lambda_{nd}-d|\notag\\
&\leq 2\big(\max\limits_{1\leq k\leq d}\lambda_{nk}\|X_1^2\|_{2r}+1\big)\sum\limits_{k=1}^d|\lambda_{nk}-1|\,\,\forall\,r>1.\label{amkd}
\end{align}
Similarly, we have
$$
\Theta_{\Theta_{F_n},F_n}-\tau'_Y(F_n)\tau_Y(F_n)=4(\lambda_{n1}^3-\lambda_{n1})X_1^2+\cdots+4(\lambda_{nd}^3-\lambda_{nd})X_d^2
+4(\lambda_{n1}+\cdots+\lambda_{nd}-d)
$$
and
\begin{equation}
\|\Theta_{\Theta_F,F}-\tau'_Y(F)\tau_Y(F)\|_{2t}\leq 4\big(\max\limits_{1\leq k\leq d}(\lambda_{nk}^2+\lambda_{nk})\|X_1^2\|_{2t}+1\big)\sum\limits_{k=1}^d|\lambda_{nk}-1|\,\,\forall\,t>1.
\end{equation}
Recalling the negative moment formula (\ref{iihg}) and the fact $\E[e^{-xX_k^2}]= (1+2 x)^{-\frac{1}{2}}$ for all $x\geq 0$ and $1\leq k\leq d,$ we have
$$c_{\alpha,d}:=\|(X_1^2 + \dots + X_d^2)^{-1}\|_\alpha<\infty\,\,\forall\,\alpha<d/2.$$
We therefore obtain the following estimates
\begin{equation}\label{uids}
\|\Theta_{F_n}^{-1}\|_{2q}\leq \frac{c_{2q,d}}{2\min\limits_{1\leq k\leq d}\lambda_{nk}^2}\,\,\forall\,1<q<d/4,
\end{equation}
\begin{equation}\label{uids1}
\|\Theta_{F_n}^{-1}\|_{4s}\leq \frac{c_{4s,d}}{2\min\limits_{1\leq k\leq d}\lambda_{nk}^2}\,\,\forall\,1<s<d/8.
\end{equation}
Let $p_1,p_2>p\in(1,d/4)$ be such that $\frac{1}{p_1}+\frac{1}{p_2}=\frac{1}{p}$ we have
\begin{align}
\|\rho_Y(F_n)\|_{2p}&=\big\|\frac{F_n+2}{2F_n+2d}\big\|_{2p}\leq \frac{1}{2}\|(F_n+d)^{-1}\|_{2p_1}\|F_n+2\|_{2p_2}\notag\\
&\leq\frac{c_{2p_1,d}\big(d\max\limits_{1\leq k\leq d}\lambda_{nk}\|X_1^2-1\|_{2p_2}+2\big)}{2\min\limits_{1\leq k\leq d}\lambda_{nk}}\,\,\forall\,p<p_1<d/4.\label{amkd1}
\end{align}
Let $\varepsilon>0$ be sufficiently small (for example $\varepsilon=1/16$). We choose to use $p=q=2+\varepsilon,$ $\frac{1}{r}=1-\frac{2}{p},$ $s=1+\varepsilon,$ $\frac{1}{t}=1-\frac{1}{s}$ and $p_1=2+2\varepsilon,$ $\frac{1}{p_2}=\frac{1}{p}-\frac{1}{p_1}.$ Then, Theorem \ref{ema} gives us
\begin{multline*}%$\frac{1}{r}\frac{1}{p}+\frac{1}{q}+=\frac{1}{s}+\frac{1}{t}=1,$
I(F_n\|Y)\leq 3(\|\rho_Y(F_n)\|_{2p}^2\|\Theta_{F_n}^{-1}\|_{2q}^2+\|\tau'_Y(F_n)\|_{2p}^2\|\Theta_{F_n}^{-1}\|_{4q}^4)\|\tau_Y(F_n)-\Theta_{F_n}\|_{2r}^2\\
+3\|\Theta_{F_n}^{-1}\|_{4s}^4\|\Theta_{\Theta_{F_n},F_n}-\tau'_Y(F_n)\tau_Y(F_n)\|_{2t}^2.
\end{multline*}
So, by combining the estimates (\ref{amkd})-(\ref{amkd1}), we conclude that there exists a positive constant $C_{\varepsilon,d}$ such that
$$I(F_n\|Y)\leq \frac{C_{\varepsilon,d}\big(\max\limits_{1\leq k\leq d}(\lambda_{nk}^2+\lambda_{nk})+1\big)^2}{\min\limits_{1\leq k\leq d}\lambda_{nk}^8}\big(\sum\limits_{k=1}^d|\lambda_{nk}-1|\big)^2\,\,\forall\,n\geq 1.$$
In particular, under the conditions $(A_1)$ and $(A_2),$ we have $I(F_n\|Y)\to 0$ as $n\to\infty$ if $\lim\limits_{n\to\infty}\lambda_{nk}=1,1\leq k\leq d.$
\end{exam}
%\noindent {\bf Acknowledgments.} I am grateful to Professor O. Johnson for insightful discussions.
\noindent {\bf Acknowledgments.}  The authors would like to thank the anonymous referees for valuable comments which led to the improvement of this work.

\noindent {\bf Declarations.} Conflict of interest: The authors declare no competing interests.
%\noindent {\bf Conflict of interest statement.} The authors have no competing interests to declare that are relevant to the content of this article.

\noindent {\bf Data Availability Statement.} Data sharing not applicable to this article as no datasets were generated or analysed during the current study.

\end{document}